\documentclass{article}
\usepackage{amsmath,amssymb}
\usepackage{graphicx,xypic}


\newtheorem{theorem}{Theorem}[section]
\newtheorem{lemma}[theorem]{Lemma}
\newtheorem{proposition}[theorem]{Proposition}
\newtheorem{corollary}[theorem]{Corollary}
\newtheorem{conjecture}[theorem]{Conjecture}
\newtheorem{definition}[theorem]{Definition}
\newtheorem{remark}[theorem]{Remark}
\newtheorem{example}[theorem]{Example}
\newtheorem{question}[theorem]{Questions}

\newcommand{\id}{\mbox{id}}

\newcommand{\Sym}{\operatorname{Sym}}

\newcommand{\FaM}{\operatorname{FaM}}

\newcommand{\mpl}{\operatorname{mpl}}
\newcommand{\Ret}{\operatorname{Ret}}
\newcommand{\Aut}{\operatorname{Aut}}
\newcommand{\Soc}{\operatorname{Soc}}
\newcommand{\Ker}{\operatorname{Ker}}
\newcommand{\Z}{\operatorname{Z}}

\newenvironment{proof}{\par\noindent{\bf Proof.}}{$\qed$\par\bigskip}
\newcommand{\qed}{\enspace\vrule  height6pt  width4pt  depth2pt}
\begin{document}

\title{Braces and the Yang-Baxter equation \thanks{ Research  partially
supported by grants of DGI MICIIN (Spain) MTM2011-28992-C02-01,
Generalitat de Catalunya 2009 SGR 1389, Onderzoeksraad of Vrije
Universiteit Brussel, Fonds voor Wetenschappelijk Onderzoek
(Belgium) and  MNiSW research grant N201 420539 (Poland).}}

\author{Ferran Ced\'o \and Eric Jespers \and Jan Okni\'nski}
\date{}
\maketitle

\begin{abstract}
Several aspects of relations between braces and non-degenerate
involutive set-theoretic solutions of the Yang-Baxter equation are
discussed and many consequences are derived. In particular, for each
positive integer $n$ a finite square-free multipermutation solution
of the Yang-Baxter equation with multipermutation level $n$ and an
abelian involutive Yang-Baxter group is constructed. This answers a
problem of Gateva-Ivanova and Cameron. It is also proved that finite
non-degenerate involutive set-theoretic solutions of the Yang-Baxter
equation whose associated involutive Yang-Baxter group is abelian
are retractable in the sense of Etingof, Schedler and Soloviev.
Earlier the authors proved this with the additional square-free
hypothesis on the solutions. Retractability of solutions is also
proved for finite square-free non-degenerate involutive
set-theoretic solutions associated to a left brace.
\end{abstract}

\section{Introduction}

The quantum Yang-Baxter equation appeared in a paper on statistical
mechanics by Yang \cite{Yang}.  It is one of the basic equations in
mathematical physics and it laid foundations of the theory of
quantum groups. Through the latter theory, the quantum Yang-Baxter
equation has also played an important role in the development of
Hopf algebras \cite{EtingofGelaki, radford}. One of the fundamental
open problems is to find all the solutions of the quantum
Yang-Baxter equation. Recall that a solution of the Yang-Baxter
equation is a linear map $R:V\otimes V \rightarrow V\otimes V$,
where $V$ is a vector space, such that
    $$R_{12}R_{13}R_{23} = R_{23}R_{13}R_{12},$$
where $R_{ij}$ denotes the map $V\otimes V \otimes V \rightarrow
V\otimes V \otimes V$ acting as $R$ on the $(i,j)$ tensor factor
and as the identity on the remaining factor. Since this problem
was stated, many solutions have been found and the related
algebraic structures have been intensively studied (see for
example \cite{kassel}). The simplest solutions are the solutions
$R$ induced by a linear extension of a mapping
$\mathcal{R}:X\times X \rightarrow X\times X$, where $X$ is a
basis for $V$.   In this case, one says that $\mathcal{R}$ is a
set-theoretic solution of the quantum Yang-Baxter equation.
Drinfeld, in \cite{drinfeld}, posed the question of finding these
set-theoretic solutions.

It is not difficult to see that if $\tau:X^{2}\rightarrow X^{2}$ is
the map defined by $\tau (x,y)=(y,x)$, then a map $\mathcal{R} :
X\times X \rightarrow X\times X$ is a set-theoretic solution of the
quantum Yang-Baxter equation if and only if the mapping $r=\tau
\circ \mathcal{R}$ is a solution of the braided equation (or a
solution of the Yang-Baxter equation, in the terminology used for
example in \cite{Gat,gat-maj})
    $$r_{12}r_{23}r_{12} = r_{23}r_{12}r_{23}.$$
Recently many results on set-theoretic solutions appeared in several
papers by Etingof, Schedler and Soloviev~\cite{ESS},
Gateva-Ivanova~\cite{Gat}, Gateva-Ivanova and Van den
Bergh~\cite{GIVdB}, Lu, Yan and Zhu~\cite{LYZ},
Rump~\cite{rump1,rump3,rump6}, Jespers and
Okni\'{n}ski~\cite{JO,JObook}, Ced\'o, Jespers and
Okni\'{n}ski~\cite{CJO}, Ced\'o, Jespers and del R\'{\i}o~\cite{CJR},
Gateva-Ivanova and Cameron~\cite{GC}, Gateva-Ivanova and
Majid~\cite{gat-maj} and others.  Recall that a bijective map
    $$\begin{array}{cccc} r\colon & X\times
    X&\longrightarrow &X\times X\\
    &(x,y)&\mapsto&(\sigma_x(y), \gamma_y(x)) \end{array}$$
is said to be involutive if $r^2=\id_{X^2}$. Moreover, it is said to
be left (respectively, right) non-degenerate if each map
$\gamma_{x}$ (respectively, $\sigma_{x}$) is bijective. Note that,
if $X$ is finite, then an involutive solution of the braided
equation is right non-degenerate if and only if it is left
non-degenerate (see \cite[Corollary~2.3]{JO} and \cite[Corollary
8.2.4]{JObook}). In \cite{ESS} and \cite{GIVdB} a beautiful group
theory translation was given of  the non-degenerate involutive
set-theoretic solutions of the Yang-Baxter equation, associating a
group,  denoted $G(X,r)$, to the solution $(X,r)$. If $X$ is finite,
then the groups obtained are called groups of $I$-type (see
Section~\ref{YBequation}) and are generated by $X=\{ x_{1},\ldots ,
x_{n}\}$ subject to defining relations $x_{i}x_{j}=x_{k}x_{l}$ (if
$r(x_{i},x_{j})=(x_{k},x_{l})$).  This approach also leads to a
class of permutation groups (referred to as involutive Yang-Baxter
groups when $X$ is finite \cite{CJR}), denoted by
$\mathcal{G}(X,r)$, that are generated by the corresponding
bijections $\sigma_{x}, x\in X$. It is hence important to
investigate the structure of these  two classes of groups in order
to classify the non-degenerate involutive set-theoretic solutions of
the Yang-Baxter equation. A possible strategy is outlined in
\cite{CJR}. Fundamental contributions in this direction have been
obtained in for example \cite{CJO,CJR,ESS,Gat,GC,rump1} and several
important open questions have been posed in
\cite{CJR,ESS,Gat-Mis,GC}.

Another approach, based on a new notion of a (right) brace, was
initiated by Rump in \cite{rump3}. This is a ring such that the
multiplication is right distributive with respect to the sum,
however it is not necessarily left distributive or associative, but
it satisfies some equation which generalizes both these conditions
(see Definition~\ref{def1}). The main reason for introducing this
algebraic object is that it allows another possible strategy to
attack the problem of classifying the non-degenerate involutive
set-theoretic solutions of the Yang-Baxter equation.

The main aim of this paper is to answer some of the fundamental
problems on non-degenerate involutive set-theoretic solutions of the
Yang-Baxter equation. First, we improve a result of \cite{CJO} on
such solutions $(X,r)$ for which the associated involutive
Yang-Baxter group is abelian and  the set $X$ is finite (see
Theorem~\ref{abelian}). Next, we answer two questions posed by
Gateva-Ivanova and Cameron in \cite{GC} (see
Theorem~\ref{elementary-abelian})  on the so called multipermutation
level, which is a certain measure of complexity of set-theoretic
solutions. Finally, we confirm a conjecture of Gateva-Ivanova made
in \cite{Gat} on set-theoretic solutions of the Yang-Baxter equation
determined by braces (see Theorem~\ref{square}). The proofs make use
of the delicate links made in the two possible strategies mentioned
above. Since the literature is quite scattered and because of
completeness and readability sake, we will give a brief and to the
point survey of the most relevant connections useful for this work.
This is done in a way that makes these connections direct and clear.
For the known results we give exact   references. We begin with a
section on braces where an explicit reformulation of the original
definition of a brace (Definition~\ref{defbrace}) is given in terms
of two related group structures on a given set. This viewpoint is
then used throughout the paper. We end with several examples of new
constructions of braces and  an exciting new reformulation by Sysak
\cite{sysak2} of the problems in the context of integral groups
rings, leading to strong links with the fundamental isomorphism
problem (asking whether a group is determined by its integral group
ring~\cite[Chapter~9]{PolcinoMiliesSehgal}). This again possibly
opens a new avenue for tackling some of the most challenging
problems on the solutions of the Yang-Baxter equation.

The outline of the remainder of the paper is as follows.\\
\begin{tabular}{l}
 Section~\ref{definitions}: Braces, homomorphisms and ideals.\\
 Section~\ref{YBequation}: Solutions of the Yang-Baxter
equation and related groups.\\
 Section~\ref{relationsYB}: Relations between the Yang-Baxter equation and braces.\\
 Section~\ref{groups}: Groups of $IG$-type and braces.\\
 Section~\ref{soclemulti}: Socle of a brace and multipermutation solutions.\\
  Section~\ref{openproblems}: About some open problems.\\
 Section~\ref{groupringrelations}: Relations between braces and group rings.\\
 Section~\ref{Section-Examples}: Constructions and examples of
 braces.
\end{tabular}

\section{Braces, homomorphisms and ideals}\label{definitions}
In \cite{rump3} Rump introduced braces as a generalization of
radical rings related with  non-degenerate involutive set-theoretic
solutions of the Yang-Baxter equation (see
Section~\ref{YBequation}). In his subsequent
papers~\cite{rump2,rump4,rump5,rump6,rump7}, he initiated the
development of the theory of this new structure. In this section we
recall its definition, give an equivalent definition and introduce
some basic concepts following Rump's papers, but  using this new
definition of brace.

First recall the definition of brace.
\begin{definition}\label{def1}  (Rump\cite[Definition
2]{rump3}) A right brace is an abelian group $(A,+)$  on which a
multiplication is defined such that the following properties hold
\begin{itemize}
\item[{(i)}] $(a+b)c=ac+bc$, for all $a,b,c\in A$,
\item[{(ii)}] $a(bc+b+c)=(ab)c+ab+ac$, for all $a,b,c\in A$,
\item[{(iii)}] for each $a\in A$, the  map $ A\longrightarrow A$, defined by
$x\mapsto xa+x$, is bijective.
\end{itemize}
\end{definition}

Let $A$ be a right brace. The circle operation $\circ$ on $A$ is
defined by
\begin{eqnarray}\label{circle}
a\circ b=ab+a+b,
\end{eqnarray}
for $a,b\in A$.

Let $A$ be an (additive) abelian group on which a multiplication is
defined that is right distributive. In \cite[Proposition 4]{rump3}
it is proved that $A$  is a right brace if and only if $A$ is a
group with respect to the circle operation~(\ref{circle}). For
$a,b,c\in A$,  $(a+b)c=ac+bc$ is equivalent to
$$(a+b)\circ c +c=a\circ c+ b\circ c.$$
Thus we reformulate the definition of a right brace as follows.

\begin{definition}\label{defbrace}
A right brace is a set $G$ with two operations $+$ and $\cdot$ such
that $(G,+)$ is an abelian group, $(G,\cdot )$ is a group  and
\begin{equation}\label{RE} (a+b)c+c=ac+bc,\end{equation}
 for all $a,b,c\in G$. We call $(G,+)$ the additive group and $(G,\cdot )$
 the multiplicative group
 of the right brace.
\end{definition}

A left brace is defined similarly, replacing condition (\ref{RE}) by
\begin{equation}\label{LE}
a(b+c)+a=ab+ac.\end{equation}

\begin{remark}
{\rm In what follows we will use Definition~\ref{defbrace} as the
definition of a right brace.}
\end{remark}

It is easy to check that in a right (left) brace $G$, the
multiplicative identity $1$ of the multiplicative group of $G$ is
equal to the neutral element $0$ of the additive  group of $G$.

\begin{definition}
Let $G$ be a right brace. The opposite brace of $G$ is the left
brace $G^{op}$ with the same additive group as $G$ and
multiplicative group equal to the opposite group of the
multiplicative group of $G$. The opposite brace of a left brace is
defined similarly.
\end{definition}

Thus there is a bijective correspondence between right braces
and left braces.

\begin{definition}
A two-sided brace is a right brace $G$ that  also is a left brace,
in other words, a right brace $G$ such that
\begin{eqnarray*} a(b+c)+a&=&ab+ac,\end{eqnarray*}
 for all $a,b,c\in G$.
\end{definition}

The next result is an easy consequence of the definition of brace,
it gives a characterization of two-sided braces and provides many
examples.

\begin{proposition}   \label{rad}
If $(G,+,\cdot )$ is a two-sided brace then $(G,+,* )$ is a radical
ring, where $*$ is the operation on $G$ defined by $a*b=ab-a-b$ for
$a,b\in G$. Conversely, if $(R,+,\cdot )$ is a radical ring then
$(R,+,\circ )$ is a two-sided brace,  where  $a\circ b = ab+a+b$,
for $a,b\in R$.
\end{proposition}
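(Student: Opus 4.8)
The plan is to exploit the single identity that ties the two multiplications together: by definition of $*$ we have $a\cdot b=a*b+a+b$, so the group operation $\cdot$ is exactly the circle (adjoint) operation attached to $*$. This one relation drives both implications, and it immediately disposes of the ``radical'' part of the first statement, since the adjoint group of the ring $(G,+,*)$ will then be literally the multiplicative group $(G,\cdot)$.

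For the first implication I would first check that $(G,+,*)$ is an associative ring and only afterwards that it is radical. Right distributivity of $*$, namely $(a+b)*c=a*c+b*c$, unwinds after substituting $a*b=ab-a-b$ into the right brace relation $(a+b)c+c=ac+bc$, i.e. (\ref{RE}); symmetrically, left distributivity $a*(b+c)=a*b+a*c$ reduces to the left brace relation (\ref{LE}), and it is precisely here that two-sidedness is used. Associativity of $*$ is then forced by associativity of $\cdot$: expanding both $(a\cdot b)\cdot c$ and $a\cdot(b\cdot c)$ through $a\cdot b=a*b+a+b$ and applying the two distributive laws just obtained, all the linear terms cancel and the equality collapses to $(a*b)*c=a*(b*c)$. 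Finally, since $a*b+a+b=a\cdot b$, the circle group of $(G,+,*)$ coincides with $(G,\cdot)$, which is a group by hypothesis; hence every element of $G$ is quasi-regular and $(G,+,*)$ is a radical ring.

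The converse is lighter. Given a radical ring $(R,+,\cdot)$, the group $(R,+)$ is abelian, while $(R,\circ)$ is a group exactly because $R$ is radical, this being the standard description of a radical ring through its adjoint group. It then remains to verify the two brace identities: $(a+b)\circ c+c=a\circ c+b\circ c$ and $a\circ(b+c)+a=a\circ b+a\circ c$ both follow by direct substitution of $a\circ b=ab+a+b$, using respectively the right and left distributivity of the ring multiplication. Hence $(R,+,\circ)$ is simultaneously a right and a left brace, i.e. a two-sided brace, and the two constructions are visibly mutually inverse.

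The only step that is not an immediate translation is the associativity of $*$ in the first implication, and even that is a short cancellation once the distributive laws are in hand; so I expect the main point to be conceptual rather than a genuine obstacle, namely recognising that $\cdot$ is the adjoint operation of $*$, which makes the radical condition for $(G,+,*)$ and the group axiom for $(G,\cdot)$ two names for the same fact.
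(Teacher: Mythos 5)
Your proof is correct and is exactly the standard argument the paper leaves implicit (the paper states this result without proof, calling it ``an easy consequence of the definition of brace''): the two distributive laws for $*$ unwind from the right and left brace identities, associativity of $*$ follows by cancellation from associativity of $\cdot$, and the observation that $\cdot$ is the adjoint operation of $*$ identifies quasi-regularity with the group axiom. Nothing is missing.
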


\begin{definition} \label{hom}  Let
$G$ and $H$ be two right (left) braces. A map $f\colon
G\longrightarrow H$ is a homomorphism of right (left) braces if
$f(a+b)=f(a)+f(b)$ and $f(ab)=f(a)f(b)$,  for all $a,b\in G$.
 The kernel of $f$ is $\Ker(f)=\{ a\in G\mid f(a)=1 \}$.
\end{definition}

Note that $\Ker(f)$ is a normal subgroup of the  multiplicative
group of a right (left) brace $G$. Since $1$ is also the neutral
element of the additive group of the brace $H$, we have that
$\Ker(f)$ also is a subgroup of the additive group of the brace $G$.
Thus $\Ker(f)$ is a right (left) brace.

For $a\in G$, we define $\rho_a, \lambda_a\in \Sym_G$ (the
symmetric group on $G$) by $\rho_a(b)=ba-a$ and
$\lambda_a(b)=ab-a$.

Note that if $b\in \Ker(f)$, then $\rho_a(b)\in\Ker(f)$ for
all $a\in G$.

\begin{definition} \label{ideal}
Let $G$ be a right (left) brace. An ideal of $G$ is a normal
subgroup $I$ of the multiplicative group of $G$ such that
$\rho_a(b)\in I,$ for all $a\in G$ and $b\in I$.
\end{definition}

Note that $\rho_a(b)=ba-a=aa^{-1}ba-a=\lambda_a(a^{-1}ba)$. Hence, a
normal subgroup $I$ of the  multiplicative group of a right (left)
brace $G$ is an ideal of $G$ if and only if $\lambda_a(b)\in I,$ for
all $a\in G$ and $b\in I$.

Let $I$ be an ideal of a right brace $G$. Let $a,b\in I$. Note that
$a-b=ab^{-1}b-b=\rho_b(ab^{-1})\in I$. Therefore $I$ is a right
brace. Let $G/I$ be the quotient group of the multiplicative group
of $G$ modulo its normal subgroup $I$. Let $c\in G$ and $b\in I$.
Note that $bc=\rho_c(b)+c\in I+c$ and hence also
$c+b=\rho_{c}^{-1}(b)c\in Ic$. Thus $Ic=I+c$ and therefore $G/I$
also is the quotient group of the additive group of $G$ modulo the
additive subgroup $I$.  It is easy to see that $(G/I,+,\cdot)$ is a
right brace. This is called the quotient brace of $G$ modulo $I$.
The quotient brace of a left brace modulo an ideal is defined
similarly.

Note that we have a natural short exact sequence of right braces
$$1\longrightarrow I\longrightarrow G\longrightarrow G/I\longrightarrow 1.$$
We say that $G$ is an extension of the brace $I$ by the brace $G/I$.

We often will make  use of the following easy consequence of
(\ref{LE}) for a left brace $G$:
\begin{eqnarray*}
c(a-b)-c  &=& ca-cb,
\end{eqnarray*}
for all $a,b,c\in G$. We now state some elementary  properties of
the maps $\lambda_a$.  The following two Lemmas are proved in
\cite{rump3} (see Proposition 2, Remark 1 and Proposition 5 in
\cite{rump3}).

\begin{lemma}\label{lambda}
Let $G$ be a left brace.
\begin{itemize}
\item[(i)] $\lambda_a(x+y)=\lambda_a(x)+\lambda_a(y)$, that is
$\lambda_ a$ is an automorphism of the abelian group $(G,+)$.
\item[(ii)] $\lambda_a\lambda_b=\lambda_{ab}$, that is the map
$\lambda\colon G\longrightarrow \Sym_G$, defined by
$\lambda(a)=\lambda_a$ is a homomorphism of groups.
\end{itemize}
\end{lemma}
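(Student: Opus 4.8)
The plan is to verify the two statements using only the left brace axiom (\ref{LE}) and the normalization $1=0$, and to deduce the bijectivity asserted in (i) from the composition law in (ii). Throughout I recall the definition $\lambda_a(b)=ab-a$.

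First I would prove the additivity in (i) by direct computation. By definition $\lambda_a(x+y)=a(x+y)-a$, and rewriting (\ref{LE}) as $a(x+y)=ax+ay-a$ gives $\lambda_a(x+y)=ax+ay-2a$. On the other hand $\lambda_a(x)+\lambda_a(y)=(ax-a)+(ay-a)=ax+ay-2a$, so the two coincide and $\lambda_a$ is an endomorphism of the additive group $(G,+)$.

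Next I would establish (ii). Unwinding the definitions, $\lambda_a(\lambda_b(c))=a(bc-b)-a$. Here the crucial tool is the consequence of (\ref{LE}) recorded just above the lemma, namely $c(a-b)-c=ca-cb$; applied with the appropriate substitution it yields $a(bc-b)-a=a(bc)-ab$. Using associativity of the multiplicative group, $a(bc)-ab=(ab)c-ab=\lambda_{ab}(c)$, so $\lambda_a\lambda_b=\lambda_{ab}$, i.e. $\lambda\colon G\longrightarrow \Sym_G$ is a homomorphism of groups.

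Finally, the bijectivity of $\lambda_a$ needed to upgrade (i) from endomorphism to automorphism falls out of (ii). Since $\lambda_1(c)=1\cdot c-1=c-0=c$ by the normalization $1=0$, we have $\lambda_1=\id$; hence $\lambda_a\lambda_{a^{-1}}=\lambda_{aa^{-1}}=\lambda_1=\id$ and likewise $\lambda_{a^{-1}}\lambda_a=\id$, so $\lambda_a$ is invertible with inverse $\lambda_{a^{-1}}$ and is therefore an automorphism of $(G,+)$. There is no genuine obstacle in this argument; the only point requiring a little care is the logical ordering—proving the composition law (ii) before concluding bijectivity in (i)—together with remembering to invoke $1=0$ so that $\lambda_1$ is the identity rather than merely an additive map fixing $0$.
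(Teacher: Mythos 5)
Your proof is correct and follows essentially the same route as the paper: the identity $a(x+y)=ax+ay-a$ gives additivity in (i), and the derived identity $c(a-b)-c=ca-cb$ together with associativity gives $\lambda_a\lambda_b=\lambda_{ab}$ in (ii). Your extra step deducing bijectivity of $\lambda_a$ from $\lambda_1=\id$ and $\lambda_a\lambda_{a^{-1}}=\lambda_1$ is a sound (and slightly more careful) touch; the paper takes bijectivity for granted in its definition of $\lambda_a\in\Sym_G$, where it is immediate since $\lambda_a$ is the composite of the bijections $b\mapsto ab$ and $x\mapsto x-a$.
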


\begin{proof}
$(i)$ Let $a,x,y\in G$. Then
\begin{eqnarray*}
\lambda_a(x+y)&=&a(x+y)-a=ax+ay-a-a=\lambda_a(x)+\lambda_a(y).
\end{eqnarray*}
$(ii)$ Let $a,b,x\in G$. Then
\begin{eqnarray*}
\lambda_a\lambda_b(x)&=&\lambda_a(bx-b)=a(bx-b)-a=abx-ab=\lambda_{ab}(x).
\end{eqnarray*}
\end{proof}

Similarly we prove the following result.

\begin{lemma}\label{rho}
Let $G$ be a right brace.
\begin{itemize}
\item[(i)] $\rho_a(x+y)=\rho_a(x)+\rho_a(y)$, that is
$\rho_ a$ is an automorphism of the abelian group $(G,+)$.
\item[(ii)] $\rho_a\rho_b=\rho_{ba}$, that is the map
$\rho\colon G\longrightarrow \Sym_G$, defined by $\rho(a)=\rho_a$ is
an antihomomorphism of groups.
\end{itemize}
\end{lemma}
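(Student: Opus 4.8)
The plan is to imitate almost verbatim the proof of Lemma~\ref{lambda}, simply trading the left-distributive identity~(\ref{LE}) for the right-distributive identity~(\ref{RE}) and keeping careful track of the order of the (generally non-commutative) multiplicative operation. The whole argument reduces to two short computations, one for each part.

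For part~(i), I would start from $\rho_a(x+y)=(x+y)a-a$ and apply~(\ref{RE}) in the form $(x+y)a+a=xa+ya$ to rewrite $(x+y)a=xa+ya-a$; substituting and cancelling one copy of $a$ in the abelian group $(G,+)$ gives $\rho_a(x+y)=(xa-a)+(ya-a)=\rho_a(x)+\rho_a(y)$, so $\rho_a$ is an endomorphism of $(G,+)$. To upgrade this to an automorphism I would observe that $\rho_a$ is the composite of two bijections of the underlying set: the map $b\mapsto ba$ is bijective because it is right translation in the group $(G,\cdot)$, and $c\mapsto c-a$ is bijective because it is a translation in the group $(G,+)$. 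A bijective additive map is an automorphism of $(G,+)$, which finishes~(i).

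For part~(ii), I would compute $\rho_a\rho_b(x)=\rho_a(xb-b)$ and use the additivity just proved in~(i) to split this as $\rho_a(xb)-\rho_a(b)=\bigl((xb)a-a\bigr)-\bigl(ba-a\bigr)=(xb)a-ba$. Now I invoke associativity of the multiplicative group to replace $(xb)a$ by $x(ba)$, which yields exactly $x(ba)-ba=\rho_{ba}(x)$. Hence $\rho_a\rho_b=\rho_{ba}$, and because the index on the right is $ba$ rather than $ab$, this says precisely that $\rho$ is an antihomomorphism. Alternatively one can bypass~(i) in this step by first deriving the right-sided analogue $(x-y)c-c=xc-yc$ of the displayed left-brace identity directly from~(\ref{RE}) and then expanding $(xb-b)a-a$ with it.

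The computations are routine; the only place demanding care is the bookkeeping of the order of products in~(ii). Since $(G,\cdot)$ need not be abelian, it is essential that associativity produces $x(ba)$ and not $x(ab)$, and it is exactly this reversal that distinguishes $\rho$ (an antihomomorphism) from $\lambda$ (a homomorphism). So the ``main obstacle'', such as it is, is simply to resist the temptation to commute the two right factors.
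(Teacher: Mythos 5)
Your proposal is correct and follows essentially the same route as the paper, which proves Lemma~\ref{rho} ``similarly'' to Lemma~\ref{lambda}: part (i) by expanding $(x+y)a-a$ via the identity $(x+y)a+a=xa+ya$, and part (ii) by the computation $\rho_a\rho_b(x)=(xb-b)a-a=(xb)a-ba=x(ba)-ba=\rho_{ba}(x)$, which is exactly your alternative using the right-sided analogue $(x-y)c-c=xc-yc$. The order bookkeeping you emphasize (obtaining $\rho_{ba}$ rather than $\rho_{ab}$) is indeed the only point of substance, and you handle it correctly.
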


In the context of Proposition~\ref{rad} we now show that  ideals of a
two-sided brace coincide with  the ideals of the corresponding  radical ring.

Let $R$ be a radical ring. Let $I$ be an ideal of the ring $R$. We
shall see that $I$ is an ideal of the two-sided  brace
$(R,+,\circ)$. Let $x\in I$ and $a,a'\in R$ be elements such that
$a\circ a'=a'\circ a=0$, that is $a'$ is the inverse of $a$ in  the
group $(R, \circ)$. We have
\begin{eqnarray*} a\circ x\circ
a'&=&(ax+a+x)a'+ax+a+x+a'\\
&=&axa'+aa'+xa'+ax+a+x+a'\\
&=&axa'+xa'+ax+x+a\circ a'=axa'+xa'+ax+x\in I.
\end{eqnarray*}
Hence $I$ is a normal subgroup of $(R,\circ )$. Furthermore
$$\rho_a(x)=x\circ a-a=xa+x+a-a=xa+x\in I.$$
Thus $I$ is an ideal of the brace $R$.

Conversely, let $J$ be an ideal of the brace $R$. We know that $J$
is a subbrace of $R$. Thus in order to prove that $J$ is an ideal of
the ring $R$ it is enough to show that $ax,xa\in J$ for all $x\in
J$ and all $a\in R$. Let $x\in J$ and $a\in R$. Let $a'\in R$ be the
inverse of $a$ in the  group $(R,\circ)$. We have
$$ax=a\circ x-a-x=\rho_a(a\circ x\circ a')-x\in J$$
and
$$xa=x\circ a-a-x=\rho_a(x)-x\in J.$$
Therefore $J$ indeed is an ideal of the ring $R$.

Note also that if $I$ is an ideal of the radical ring $R$, then
the ring $R/I$ is the radical ring corresponding to the quotient
brace of the two-sided brace $R$ modulo $I$.

\section{Solutions of the Yang-Baxter
equation and related groups}\label{YBequation}

Let $X$ be a non-empty set. Let $r\colon X^2\longrightarrow X^2$ be
a map and write  $r(x,y)=(\sigma_x(y), \gamma_y(x))$. We say that
$(X,r)$ is a right non-degenerate involutive set-theoretic solution
of the Yang-Baxter equation if
\begin{itemize}
\item[(1)] $r^2=\id_{X^2}$,
\item[(2)] $\sigma_x\in \Sym_X$, for all $x\in X$,
\item[(3)] $r_1r_2r_1=r_2r_1r_2$, where $r_1=r\times \id_X\colon X^3\longrightarrow X^3$
and $r_2=\id_X\times r\colon X^3\longrightarrow X^3$.
\end{itemize}

Let $X,Y$ be non-empty sets and let $r\colon X^2\longrightarrow
X^2$ and $r'\colon Y^2\longrightarrow Y^2$ be maps such that
$r(x,x')=(\sigma_x(x'), \gamma_{x'}(x))$ and
$r'(y,y')=(\sigma'_{y}(y'), \gamma'_{y'}(y))$. A homomorphism from
$(X,r)$ to $(Y,r')$ is a map $f\colon X\longrightarrow Y$ such
that
$$(f(\sigma_x(x')),f(\gamma_{x'}(x)))=r'(f(x),f(x')),$$
for all  $x,x'\in X$. Note that if $r$ is a (right
non-degenerate involutive) set-theoretic solution of the Yang-Baxter equation
then so is $(f(X),r'_{|f(X)^{2}})$. If $f$ is bijective then we say
that $(X,r)$ and $(Y,r')$ are isomorphic.

The  next result is an easy generalization of \cite[Theorem
4.1]{CJO2}.

\begin{proposition}\label{solution}
Let $X$ be a non-empty set. Let $r\colon X^2\longrightarrow X^2$ be
a map such that $r(x,y)=(\sigma_x(y), \gamma_y(x))$. Then $(X,r)$ is
an  right non-degenerate involutive set-theoretic solution of the
Yang-Baxter equation if and only if
\begin{itemize}
\item[(i)] $r^2=\id_{X^2}$,
\item[(ii)] $\sigma_x\in \Sym_X$, for all $x\in X$,
\item[(iii)] $\sigma_x\circ \sigma_{\sigma_x^{-1}(y)}=
\sigma_y\circ \sigma_{\sigma_y^{-1}(x)}$, for all $x,y\in X$.
\end{itemize}
\end{proposition}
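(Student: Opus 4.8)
The plan is to notice first that conditions (i) and (ii) are literally conditions (1) and (2) in the definition of a right non-degenerate involutive set-theoretic solution, so the entire content is to prove that, \emph{assuming} (i) and (ii), the braid relation (3) is equivalent to the single identity (iii). The bridge between the two is the observation that, once $r^2=\id_{X^2}$ and every $\sigma_x$ is bijective, the map $\gamma$ is already determined by the maps $\sigma_x$. Writing out $r^2(x,y)=(x,y)$ gives $\sigma_{\sigma_x(y)}(\gamma_y(x))=x$, and since $\sigma_{\sigma_x(y)}\in\Sym_X$ we may invert it to obtain
\[ \gamma_y(x)=\sigma_{\sigma_x(y)}^{-1}(x). \]
I will refer to this as the formula for $\gamma$; it is the engine of the whole argument, and it uses only (i) and (ii).

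Next I would expand both sides of (3) coordinate by coordinate on a triple $(x,y,z)$. The equality of the \emph{first} coordinates of $r_1r_2r_1$ and $r_2r_1r_2$ reads
\[ \sigma_{\sigma_x(y)}\circ\sigma_{\gamma_y(x)}=\sigma_x\circ\sigma_y, \]
for all $x,y$; call this identity $(\star)$. Substituting the formula for $\gamma$ into $(\star)$ and changing variables through $u=\sigma_x(y)$ (so that $\sigma_{\sigma_x(y)}=\sigma_u$, $\sigma_{\gamma_y(x)}=\sigma_{\sigma_u^{-1}(x)}$ and $\sigma_y=\sigma_{\sigma_x^{-1}(u)}$) turns $(\star)$ into exactly (iii); since $u$ ranges over all of $X$ as $y$ does, $(\star)$ and (iii) are equivalent. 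This already settles the implication (3) $\Rightarrow$ (iii), because the braid relation certainly forces the equality of the first coordinates.

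For the converse (iii) $\Rightarrow$ (3) I would assume (iii), hence $(\star)$, and show that $r_1r_2r_1$ and $r_2r_1r_2$ agree in all three coordinates. Using the formula for $\gamma$, the second coordinate of $r_1r_2r_1$ collapses to $\sigma_{T_1}^{-1}(\sigma_x(y))$, where $T_1=\sigma_{\sigma_x(y)}(\sigma_{\gamma_y(x)}(z))$ is the common first coordinate; a short computation, in which (iii) is applied to the pair $\bigl(\sigma_x(\sigma_y(z)),\,x\bigr)$, shows that the second coordinate of $r_2r_1r_2$ equals the same expression. The third coordinate then comes for free: both $r_1r_2r_1$ and $r_2r_1r_2$ are involutions of $X^3$ (since $r^2=\id_{X^2}$), they now agree in their first two coordinates everywhere, and the first coordinate of $r_2r_1r_2$ evaluated at any $(a,b,c)$ equals $\sigma_a(\sigma_b(c))$, which is injective in $c$. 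Applying each of the two maps twice to $(x,y,z)$ returns $(x,y,z)$, and comparing the resulting first coordinates forces $\sigma_a(\sigma_b(\cdot))$ to take the same value on the two third coordinates; injectivity of $\sigma_a,\sigma_b$ then makes those third coordinates coincide.

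The step I expect to be the main obstacle is exactly this reduction of the full braid relation to the single identity (iii): a priori (3) is three coordinate identities, and the work lies in showing that $(\star)$ alone already entails the other two. The formula for $\gamma$, available thanks to right non-degeneracy and involutivity, is what makes this collapse possible; it is also what lets the argument avoid assuming that the maps $\gamma_y$ are bijective (left non-degeneracy), so that $X$ need not be finite. This is the sense in which the statement is, as claimed, an easy generalization of \cite[Theorem 4.1]{CJO2}.
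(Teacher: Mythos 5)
Your argument is correct, and it supplies in full the computation that the paper dismisses as ``straightforward'' with a pointer to \cite[Theorem 9.3.10]{JObook}: the identity $\gamma_y(x)=\sigma_{\sigma_x(y)}^{-1}(x)$ extracted from $r^2=\id_{X^2}$, the identification of (iii) with the first-coordinate identity $\sigma_{\sigma_x(y)}\circ\sigma_{\gamma_y(x)}=\sigma_x\circ\sigma_y$ via the substitution $u=\sigma_x(y)$, and the deduction of the remaining two coordinate identities from that one are exactly the expected route. The only delicate point, recovering the third coordinate, is handled soundly: both $r_1r_2r_1$ and $r_2r_1r_2$ square to the identity, they agree in the first two coordinates, and the first coordinate $\sigma_a(\sigma_b(c))$ of $r_2r_1r_2(a,b,c)$ is injective in $c$, which pins down the third coordinate as you say.
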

\begin{proof} It is straightforward (see the proof of \cite[Theorem
9.3.10]{JObook}).
\end{proof}

Let $(X,r)$ be a non-degenerate involutive set-theoretic solution of
the Yang-Baxter equation.  The structure group of $(X,r)$ is the
group $G(X,r)$  generated by  the elements of the set $X$ and
subject to the  relations $xy=zt$ for all $x,y,z,t\in X$ such that
$r(x,y)=(z,t)$. Let $\mathbb{Z}^X$ denote the additive free abelian
group with basis $X$. Consider the natural action of $\Sym_X$ on
$\mathbb{Z}^X$ and the associated semidirect product
$\mathbb{Z}^X\rtimes \Sym_X$. In \cite[Propositions 2.4 and
2.5]{ESS} it is proved that $G(X,r)$ is isomorphic to a subgroup of
$\mathbb{Z}^X\rtimes \Sym_X$ of the form
$$\{ (a,\phi(a))\mid a\in \mathbb{Z}^X \},$$
for some function $\phi\colon \mathbb{Z}^X\longrightarrow \Sym_X$.
In fact, $\phi(x)=\sigma_x$ for all $x\in X$, where
$r(x,y)=(\sigma_x(y),\gamma_y(x))$, and the map $x\mapsto
(x,\sigma_x)$ from $X$ to $\mathbb{Z}^X\rtimes \Sym_X$ induces an
isomorphism from $G(X,r)$ to $\{ (a,\phi(a))\mid a\in \mathbb{Z}^X
\}$.  The subgroup of $\Sym_X$ generated by $\{\sigma_x\mid x\in
X\}$ is denoted by   $\mathcal{G}(X,r)$ (see \cite{GC}).

We now recall the definition  of a monoid of $I$-type introduced by
Gateva-Ivanova and Van den Bergh in \cite{GIVdB}. For this, let
$\FaM_n$ denote the multiplicative free abelian monoid of rank $n$
with basis $\{ u_1,\dots ,u_n\}$.

\begin{definition}
A monoid $S$ generated by a set $X=\{ x_1,\dots ,x_n\}$ is said to
be of left $I$-type if there exists a bijection (called a left
$I$-structure)
$$v\colon \FaM_n\longrightarrow S$$
such that, for all $a\in \FaM_n$,
$$v(1)=1 \quad\mbox{and}\quad \{ v(u_1a),\dots ,v(u_na)\}=\{ x_1v(a),\dots ,x_nv(a)\}.$$
\end{definition}

Similarly, one defines monoids of right $I$-type.

In \cite[Theorem 1.3]{GIVdB} it is proved that a monoid $S$ is of
left $I$-type if and only if there exists a left
non-degenerate involutive set-theoretic solution of the Yang-Baxter equation
$(X,r)$ such that $X$ is finite and $S$ is isomorphic to the monoid
generated by the elements of the set $X$ and subject to the
relations $xy=zt$, where $x,y,z,t\in X$ such that $r(x,y)=(z,t)$.
Furthermore, it is also proved that $S$ has a group of quotients
$G=SS^{-1}$ and $G$ is a Bieberbach group \cite[Theorem 1.6]{GIVdB}.

In \cite{JO} Jespers and Okni\'{n}ski proved that a monoid $S$ is of
left $I$-type if and only if $S$ is of right $I$-type. Therefore if
$(X,r)$ is a right non-degenerate involutive set-theoretic solution
of the Yang-Baxter equation and $X$ is finite, then $(X,r)$ also is
left non-degenerate, that is, if $r(x,y)=(\sigma_x(y),
\gamma_y(x))$, then $\gamma_x\in \Sym_X$, for all $x\in X$.
Furthermore $\sigma_x(y)=\gamma^{-1}_{\gamma_y(x)}(y)$ and
$\gamma_x(y)=\sigma^{-1}_{\sigma_y(x)}(y)$ for all $x,y\in X$. In
particular, the group of fractions of $S$ is  isomorphic to the
structure group $G(X,r)$ associated to the solution $(X,r)$. The
monoid $S$ is simply called a monoid of $I$-type and its group of
fractions $SS^{-1}$ is said to be a group of $I$-type.

 A finite group $G$
is said to be an involutive Yang-Baxter group (IYB group)  if it is
isomorphic to $\mathcal{G}(X,r)$ for some
non-degenerate involutive set-theoretic solution $(X,r)$ of the Yang-Baxter
equation with $X$ finite (see  \cite{CJR}).

In order to  describe and to study the non-degenerate involutive
set-theoretic finite solutions of the Yang-Baxter equation one needs
to investigate  which groups are $I$-type groups and determine their
structure. This naturally leads to the question which groups are IYB
groups.

In \cite[Theorem 2.15]{ESS}, it is shown that if $X$ is finite, then
$G(X,r)$ is a solvable group. Note that the map
$$G(X,r)\cong \{ (a,\phi(a))\mid a\in \mathbb{Z}^X \}\longrightarrow
\Sym_X$$ induced by $x\mapsto (x,\sigma_x)\mapsto \sigma_x$, for
$x\in X$ shows that $\mathcal{G}(X,r)$ is a homomorphic image of
$G(X,r)$. Thus, if $X$ is finite, then the IYB group
$\mathcal{G}(X,r)$ is solvable. It is not known whether  every
finite solvable group is IYB (see \cite{CJR}).

In order to study the structure of  a  group of $I$-type and to
classify set-theoretic solutions, Etingof, Shedler and Soloviev in
\cite{ESS} proposed an  interesting   operator as a tool. We recall
its definition. Let $(X,r)$ be a non-degenerate involutive
set-theoretic solution of the Yang-Baxter equation. Suppose that
$r(x,y)=(\sigma_x(y),\gamma_y(x))$. The retract relation $\sim $ on
the set $X$  with respect to $r$ is defined by $x\sim y$ if
$\sigma_{x}=\sigma_{y}$. There is a natural induced solution
$\Ret(X,r)=(X/\!\sim, \tilde{r})$, called the retraction of $(X,r)$,
that is
$$\tilde{r}([x],[y])=([\sigma_x(y)],[\gamma_y(x)]),$$
where $[x]$ denotes the $\sim$-class of $x\in X$. A  non-degenerate
involutive set-theoretic solution $(X,r)$ is called a
multipermutation solution of level $m$ if $m$ is the smallest
nonnegative integer such that the solution $\Ret^{m}(X,r)$ has
cardinality $1$; in this case we write $\mpl(X,r)=m$. Here we define
$\Ret^{k}(X,r)=\Ret(\Ret^{k-1}(X,r))$ for $k>1$. If such an $m$
exists then one also says that the solution is retractable. In this
case, if $X$ also is finite then it is easy to see that the group
$G(X,r)$ is a poly-(infinite cyclic) (Proposition 4.2 in \cite{JO}).
Note that  not all groups of $I$-type are poly-(infinite cyclic)
(see \cite{JO}).

\section{Relations between the Yang-Baxter equation and braces} \label{relationsYB}

We now discuss a strong connection between braces and non-degenerate
involutive  set-theoretic solutions of the Yang-Baxter equation. We
begin with the following lemma  which is implicit in \cite{rump3}
and in \cite[page 132]{rump2}.

\begin{lemma}\label{varphi}
Let $G$ be a left brace. The following properties hold.
\begin{itemize}
\item[(i)]
$a\lambda_a^{-1}(b)=b\lambda_b^{-1}(a)$.
\item[(ii)]
$\lambda_a\lambda_{\lambda_a^{-1}(b)}=\lambda_b\lambda_{\lambda_b^{-1}(a)}$.
\item[(iii)] The map $r\colon G\times G\longrightarrow G\times G$
defined by $r(x,y)=(\lambda_x(y),\lambda_{\lambda_x(y)}^{-1}(x))$ is
a non-degenerate involutive  set-theoretic solution of the
Yang-Baxter equation.
\end{itemize}
\end{lemma}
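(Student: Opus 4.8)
The plan is to verify the three properties in turn, relying throughout on Lemmas~\ref{lambda} and the basic identities of a left brace. For part (i), I would start from the defining formula $\lambda_a(x)=ax-a$, so that $\lambda_a^{-1}(b)$ is the unique $y$ with $ay-a=b$, i.e. $ay=a+b$. Then $a\lambda_a^{-1}(b)=ay$, and the claim $a\lambda_a^{-1}(b)=b\lambda_b^{-1}(a)$ becomes $a+b=b+a$ after applying the same computation symmetrically. Since $(G,+)$ is abelian, this is immediate. Concretely, writing $u=\lambda_a^{-1}(b)$ we have $au=a+b$ by the left-brace identity $\lambda_a(u)+a=au$, and symmetrically $bv=b+a$ where $v=\lambda_b^{-1}(a)$; commutativity of $+$ then gives $au=bv$, which is exactly (i).

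For part (ii), the cleanest route is to apply the homomorphism property $\lambda_a\lambda_b=\lambda_{ab}$ from Lemma~\ref{lambda}(ii) to both sides. The left-hand side equals $\lambda_{a\,\lambda_a^{-1}(b)}$ and the right-hand side equals $\lambda_{b\,\lambda_b^{-1}(a)}$. By part (i) the two subscripts coincide, so the two composite maps are equal. Thus (ii) follows formally from (i) together with Lemma~\ref{lambda}(ii), with no further computation needed.

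For part (iii), I would invoke Proposition~\ref{solution}: it suffices to check conditions (i)--(iii) there for the map $r(x,y)=(\sigma_x(y),\gamma_y(x))$ with $\sigma_x=\lambda_x$. Condition (ii) of that proposition, $\sigma_x\in\Sym_G$, is immediate since each $\lambda_x$ is an automorphism of $(G,+)$ by Lemma~\ref{lambda}(i). Condition (iii) there, namely $\sigma_x\sigma_{\sigma_x^{-1}(y)}=\sigma_y\sigma_{\sigma_y^{-1}(x)}$, is precisely part (ii) of the present lemma once we substitute $\sigma=\lambda$. The remaining point is involutivity, $r^2=\id$. Here I would compute $r(x,y)=(\lambda_x(y),\lambda_{\lambda_x(y)}^{-1}(x))$ and then apply $r$ again; writing $u=\lambda_x(y)$ and $w=\lambda_u^{-1}(x)$, the first coordinate of $r(u,w)$ is $\lambda_u(w)=\lambda_u(\lambda_u^{-1}(x))=x$, and one checks the second coordinate returns $y$ by unwinding the definitions. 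This involutivity check is the step most likely to require care with the indices, since the second component is defined through a nested inverse; everything else reduces cleanly to the two earlier parts and to Lemma~\ref{lambda}.

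I expect the main obstacle to be purely bookkeeping in the involutivity verification of part (iii): keeping straight which $\lambda$-subscript is being inverted when $r$ is iterated. Once the substitution $\sigma_x=\lambda_x$ is in place, parts (i) and (ii) do all the structural work, and non-degeneracy is automatic from the bijectivity of each $\lambda_x$; so the only genuine computation is confirming that the nested expression $\lambda_{\lambda_x(y)}^{-1}(x)$ feeds back correctly under a second application of $r$.
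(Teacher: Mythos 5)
Your treatment of (i) and (ii) coincides with the paper's: you identify $\lambda_a^{-1}(b)$ as the element $u$ with $au=a+b$, deduce (i) from commutativity of $+$, and obtain (ii) by applying $\lambda_{(\cdot)}$ to both sides of (i) via Lemma~\ref{lambda}(ii). The involutivity check in (iii) is also correct and is exactly the ``easy to check'' step of the paper.

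There is, however, a genuine gap at the end of (iii): you assert that ``non-degeneracy is automatic from the bijectivity of each $\lambda_x$.'' Bijectivity of the $\lambda_x$ only gives \emph{right} non-degeneracy (each $\sigma_x=\lambda_x$ is a bijection), which is what Proposition~\ref{solution} delivers. The lemma claims the solution is non-degenerate, i.e.\ also \emph{left} non-degenerate: for each fixed $y$ the map $g_y(x)=\lambda_{\lambda_x(y)}^{-1}(x)$ must be a bijection of $G$. This is not automatic; the equivalence of left and right non-degeneracy for involutive solutions is only known for finite $X$ (it is a theorem, quoted in the introduction from \cite{JO}), whereas the lemma is stated for an arbitrary, possibly infinite, left brace. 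The paper closes this gap by an explicit computation: using $\lambda_c^{-1}(d)=c^{-1}(d+c)$ and the identity $c(a-b)-c=ca-cb$ one finds
$$g_y(x)=\lambda_{xy-x}^{-1}(x)=(xy-x)^{-1}(xy)=\bigl((xy)^{-1}(xy-x)\bigr)^{-1}=\bigl(-y^{-1}+(xy)^{-1}\bigr)^{-1},$$
which exhibits $g_y$ as a composition of bijections ($x\mapsto xy$, multiplicative inversion, translation by $-y^{-1}$ in $(G,+)$, and inversion again). You need to supply this argument (or an equivalent one) to complete part (iii).
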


\begin{proof}
$(i)$ Let $a,b\in G$. Then
\begin{eqnarray*}
a\lambda_a^{-1}(b) &=&a(a^{-1}(b+a))=b+a =b\lambda_b^{-1}(a).
\end{eqnarray*}
$(ii)$ is a consequence of $(i)$ and Lemma~\ref{lambda}.

\noindent $(iii)$ It is easy to check that $r^2=\id_{G^2}$. By
$(ii)$ and Proposition~\ref{solution} $(G,r)$ is a
right non-degenerate involutive set-theoretic solution of the Yang-Baxter
equation. Let $g_y(x)=\lambda_{\lambda_x(y)}^{-1}(x)$. To show that
$(G,r)$ is left non-degenerate we prove that $g_y$ is bijective. For
this we note that
\begin{eqnarray*}
g_y(x)&=&\lambda_{\lambda_x(y)}^{-1}(x)=\lambda_{xy-x}^{-1}(x)=(xy-x)^{-1}(x+xy-x)\\
&=&(xy-x)^{-1}xy=((xy)^{-1}(xy-x))^{-1}=(1-y^{-1}+(xy)^{-1})^{-1}\\
&=&(-y^{-1}+(xy)^{-1})^{-1}.
\end{eqnarray*}
Therefore the assertion follows.
\end{proof}

Let $G$ be a left brace. The set-theoretic solution of the
Yang-Baxter equation $(G,r)$ defined in Lemma~\ref{varphi} is called
the solution of the Yang-Baxter equation associated to the left
brace $G$.

We now give an equivalent condition for a group to be a
multiplicative group of a left brace.  This is an easy
generalization of a part of \cite[Theorem 2.1]{CJR} (see the
definition of IYB morphism in \cite[page 2546]{CJR}).

\begin{proposition} \label{lambda-main}
A group $G$ is the  multiplicative group of a left brace if and only
if there exists a group homomorphism $\mu: G\longrightarrow
\Sym_{G}$ such that $x\mu (x)^{-1}(y)=y\mu (y)^{-1}(x)$ for all
$x,y\in G$.
\end{proposition}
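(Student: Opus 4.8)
The plan is to prove both implications directly from the reformulated definition of a left brace (Definition~\ref{defbrace}) together with the properties of the maps $\lambda_a$ recorded in Lemma~\ref{lambda}. For the forward direction, suppose $G$ is the multiplicative group of a left brace. I would take $\mu=\lambda\colon G\longrightarrow \Sym_G$, sending $a\mapsto\lambda_a$ where $\lambda_a(b)=ab-a$. By Lemma~\ref{lambda}(ii) this $\lambda$ is a group homomorphism, so it serves as the required $\mu$. The identity $x\mu(x)^{-1}(y)=y\mu(y)^{-1}(x)$ is then exactly part~(i) of Lemma~\ref{varphi}, namely $a\lambda_a^{-1}(b)=b\lambda_b^{-1}(a)$, which was already established. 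So this direction is essentially a citation of results proved earlier in the excerpt.

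The substance lies in the converse. Suppose we are given a group $G$ (with multiplication written multiplicatively) and a homomorphism $\mu\colon G\longrightarrow \Sym_G$ satisfying $x\mu(x)^{-1}(y)=y\mu(y)^{-1}(x)$ for all $x,y$. I must manufacture an abelian group operation $+$ on the underlying set of $G$ so that $(G,+)$ is abelian, the original multiplication is a group, and the left-brace law \eqref{LE}, $a(b+c)+a=ab+ac$, holds. The natural guess, reverse-engineering the formula $\lambda_a(b)=ab-a$, is to \emph{define} addition by the rule that $b+c$ should be the element whose image reconstructs $\lambda_a$; concretely I would define $b+c := \mu(b)(c)\cdot b$ or, equivalently and more symmetrically, set $a+b$ via $a+b=b\mu(b)(\text{\dots})$, then verify the axioms. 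More precisely, guided by $\lambda_a(b)=ab-a$ one expects $a+b=\mu(a)^{-1}(?)$; the cleanest route is to posit $a\oplus b := a\,\mu(a)(b)$ should invert the $\lambda$ construction, but one must check which arrangement yields associativity and commutativity.

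The main obstacle, and the step I expect to require the most care, is proving that the newly defined operation $+$ is \emph{commutative} and \emph{associative}, making $(G,+)$ an abelian group. Commutativity is precisely where the hypothesis $x\mu(x)^{-1}(y)=y\mu(y)^{-1}(x)$ must be invoked: this identity is the abstract shadow of the relation $a\lambda_a^{-1}(b)=b\lambda_b^{-1}(a)$ that holds automatically in a genuine brace, so without it the candidate addition would not be symmetric. Associativity should follow from $\mu$ being a homomorphism, i.e.\ $\mu(x)\mu(y)=\mu(xy)$, combined with the compatibility identity; I anticipate a somewhat delicate computation unwinding $\mu(a+b)$ in terms of $\mu(a)$ and $\mu(b)$. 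The identity element $0$ of $+$ will coincide with the multiplicative identity $1$, and additive inverses come from the group structure on the symmetric group side.

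Once $(G,+)$ is shown to be an abelian group, verifying the left-brace identity \eqref{LE} is the final step, and it should reduce to a direct manipulation using the definition of $+$ and the homomorphism property of $\mu$, together with the observation that, under the chosen definition, $\mu(a)(b)=a^{-1}(a+b)$ (the abstract analogue of $\lambda_a(b)=ab-a$). Thus $\mu$ will automatically coincide with the $\lambda$-map of the resulting brace, closing the loop with the forward direction. The reference to \cite[Theorem 2.1]{CJR} suggests the argument mirrors the IYB-morphism characterization there, so I would structure the converse to parallel that proof, with the key nontrivial input being the use of the compatibility condition to force commutativity of~$+$.
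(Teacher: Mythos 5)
Your overall strategy is exactly the paper's: the forward direction is Lemma~\ref{lambda}(ii) plus Lemma~\ref{varphi}(i) with $\mu=\lambda$, and the converse is proved by defining an addition on $G$ from $\mu$, using the compatibility identity for commutativity and the homomorphism property for associativity, and then verifying \eqref{LE}. So there is no divergence of method. There are, however, two concrete problems with the proposal as written.

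First, every candidate formula you actually write down for the addition is wrong: $b+c:=\mu(b)(c)\cdot b$ and $a\oplus b:=a\,\mu(a)(b)$ both use $\mu(a)$ where $\mu(a)^{-1}$ is required. The correct definition is $x+y:=x\,\mu(x)^{-1}(y)$, and it is forced on you: the hypothesis $x\mu(x)^{-1}(y)=y\mu(y)^{-1}(x)$ \emph{is} the statement that this operation is commutative (this mirrors $a+b=a\lambda_a^{-1}(b)$, which holds in any left brace by the computation in Lemma~\ref{varphi}(i)). You cannot repair the wrong variant by replacing $\mu$ with $x\mapsto\mu(x)^{-1}$, since that map is an antihomomorphism, which would break the associativity argument. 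The same inversion error reappears at the end, where you claim $\mu(a)(b)=a^{-1}(a+b)$; in fact $a^{-1}(a+b)=\mu(a)^{-1}(b)$. Second, the substance of the converse --- the associativity computation $(x+y)+z=x+(y+z)$, the intermediate fact that $\mu(x)(y+z)=\mu(x)(y)+\mu(x)(z)$ (which the paper proves first and then uses to derive \eqref{LE}), and the verification of \eqref{LE} itself --- is only announced, not carried out. These are several lines of nontrivial manipulation each, all of which depend on having the correct formula for $+$, so as it stands the proposal is a correct plan with an incorrect key definition and the main verifications missing.
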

\begin{proof}
Suppose that such a homomorphism $\mu$ is given. Define an operation
$+$ on $G$ by $x+y=x\mu (x)^{-1}(y)$ for $x,y\in G$. Clearly
$x+y=y+x$. Further, for $x,y,z\in G$,
\begin{eqnarray*}
(x+y)+z &=& y \mu (y)^{-1} (x) +z =  y \mu (y)^{-1} (x) \, \mu ( y \mu (y)^{-1} (x))^{-1}(z)\\
 &=&  y \mu (y)^{-1} (x) \; \mu ( \mu (y)^{-1} (x))^{-1} \; (\mu (y)^{-1}(z))\\
 &=&  y \mu (y)^{-1} (z) \; \mu (\mu (y)^{-1} (z))^{-1} \; (\mu (y)^{-1} (x))\\
 &=&  y \mu(y)^{-1}(z)\mu(y \mu (y)^{-1}(z))^{-1}(x)\\
 &=& (y+z)+x = x+ (y+z).
\end{eqnarray*}
It is easy to check that $1$ is the neutral element for this
addition and $\mu (x)(x^{-1})$ is the opposite of $x$.
Hence $(G,+)$ is an abelian group.

For $x,y,z\in G$ we also get that $x^{-1}\mu (x^{-1})^{-1}(y+z)=x^{-1}+y+z$ and hence
\begin{eqnarray*}
\mu (x) (y+z) &=& x\; (x^{-1}+y+z) \\
 &=& x \; ( x^{-1} \mu (x) (y) +z ) \\
 &=& xx^{-1} \mu (x) (y) \; \mu ( x^{-1}\mu (x) (y))^{-1} (z)\\
 &=& \mu (x) (y) \; \mu (\mu (x) (y) )^{-1} \; (\mu (x) (z))\\
 &=& \mu (x)(y) +\mu (x) (z)
\end{eqnarray*}
and therefore
\begin{eqnarray*}
x(y+z)+x&=&x\mu(x)^{-1}(\mu(x)(y+z))+x=x+\mu(x)(y+z)+x\\
&=&x+\mu(x)(y)+x+\mu(x)(z)=xy+xz.
\end{eqnarray*}
Hence $(G,+,\cdot)$ is a left brace. Note that in this brace
$\lambda_x=\mu(x)$ for every $x\in G$.

The converse follows by Lemmas~\ref{lambda} and~\ref{varphi}.
\end{proof}

\begin{theorem}\label{bracesol}
If $G$ is a left brace then there exists a
non-degenerate involutive set-theoretic solution $(X,r')$ of the Yang-Baxter
equation such that the solution $\Ret (X,r')$ is isomorphic to the
solution $(G,r)$ associated to the left brace $G$ and, moreover,
${\cal{G}} (X,r')$ is isomorphic to the multiplicative group of the
left brace $G$. Furthermore, if $G$ is finite then $X$ can be taken
a finite set.
\end{theorem}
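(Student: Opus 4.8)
The plan is to realise $(G,r)$ as a retraction by exhibiting a larger left brace $B$ whose associated solution (in the sense of Lemma~\ref{varphi}) does the job, and then to set $(X,r'):=(B,r_B)$. The key observation is that for the associated solution of \emph{any} left brace $B$ one has $\sigma_b=\lambda_b$, so the retract relation reads $b\sim b'$ iff $\lambda_b=\lambda_{b'}$, i.e. iff $b^{-1}b'\in\Ker(\lambda)$ (using that $\lambda$ is a homomorphism, Lemma~\ref{lambda}). Hence the retract classes are exactly the cosets of $K:=\{b\in B:\lambda_b=\id\}$, and $\mathcal{G}(B,r_B)=\langle\lambda_b\rangle=\lambda(B)\cong (B,\cdot)/K$. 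So everything will follow once I produce a $B$ together with a brace epimorphism $\pi\colon B\to G$ whose kernel is \emph{precisely} $K$: then $\mathcal{G}(X,r')\cong (B,\cdot)/K\cong(G,\cdot)$, and $\Ret(X,r')$ is the push-forward of $(B,r_B)$ along $\pi$ (note $\pi(\lambda^{B}_x(y))=\lambda^{G}_{\pi(x)}(\pi(y))$ since $\pi$ is a brace map), which is exactly the associated solution $(G,r)$.

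To build $B$ I would choose an abelian group $A$ carrying a \emph{faithful} action $\alpha\colon (G,\cdot)\to\Aut(A)$ — for instance the left regular representation, $A=\mathbb{Z}[G]$ in general and $A=(\mathbb{Z}/2)[G]$ when $G$ is finite, so that $A$ is finite exactly when $G$ is. Regard $A$ as a trivial brace (so $\lambda^{A}_a=\id$ and the circle operation on $A$ is $+$) and form the semidirect product of braces $B=A\rtimes_\alpha G$: its additive group is $A\times(G,+)$, and its multiplicative group is $A\rtimes_\alpha(G,\cdot)$ with $(a,g)(a',g')=(a+\alpha_g(a'),\,gg')$. A direct computation gives
$$\lambda^{B}_{(a,g)}(a',g')=(\alpha_g(a'),\,\lambda^{G}_g(g')),$$
with no cross term, because $A$ is abelian and trivial. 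Thus $\lambda^{B}_{(a,g)}=\id$ forces $\lambda^{G}_g=\id$ and $\alpha_g=\id$; since $\alpha$ is faithful the latter already gives $g=1$, so $K=\Ker(\lambda^{B})=A\times\{1\}$, which is exactly the kernel of the projection $\pi\colon B\to G,\ (a,g)\mapsto g$.

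It then remains to check the routine points: that $B$ is indeed a left brace (most efficiently via Proposition~\ref{lambda-main}, taking $\mu=\lambda^{B}$ as displayed and verifying $x\mu(x)^{-1}(y)=y\mu(y)^{-1}(x)$ componentwise, or by quoting the semidirect-product construction); that $\pi$ is a brace epimorphism with $\Ker\pi=A\times\{1\}=K$, whence $B/K\cong G$ as braces; and that $A\times\{1\}$ is an ideal, which is immediate from the displayed formula since $\lambda^{B}_{(a,g)}(a',1)=(\alpha_g(a'),1)$. With these in hand, the solution associated to $B$ satisfies $\mathcal{G}(X,r')=\lambda^{B}(B)\cong (B,\cdot)/K\cong(G,\cdot)$, while its retraction identifies precisely the $K$-cosets and, being the image solution under $\pi$, is isomorphic to $(G,r)$. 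Finiteness is guaranteed by the choice $A=(\mathbb{Z}/2)[G]$.

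The main obstacle — and the only genuinely delicate point — is the faithfulness requirement on $\alpha$. The naive attempt of taking $A=(G,+)$ with $\alpha=\lambda^{G}$ fails, since then every $g\in\Ker(\lambda^{G})$ would also act trivially on $A$, forcing $K$ to be strictly larger than $A\times\{1\}$; consequently $\mathcal{G}(X,r')$ would collapse to the proper quotient $(G,\cdot)/\Ker(\lambda^{G})$ and the retraction would overshoot $(G,r)$. Introducing an auxiliary faithful module $A$ is exactly what separates the two coordinates and pins $\Ker(\lambda^{B})$ down to $A\times\{1\}$; verifying that this enlargement respects the brace axioms (the semidirect-product computation) is the technical heart, whereas the identification of $\Ret(X,r')$ and of $\mathcal{G}(X,r')$ are then formal consequences of $K=\Ker\pi$.
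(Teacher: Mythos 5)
Your proof is correct, but it takes a genuinely different route from the paper's. The paper stays entirely set-theoretic: it puts $X=G\times\{1,2\}$ (so $|X|=2|G|$), lets $f_{(a,1)}=\id_X$ and lets $f_{(a,2)}$ act by left translation $b\mapsto ab$ on the first copy and by $\lambda_a$ on the second, then verifies the solution axioms via Proposition~\ref{solution} and computes $\Ret(X,r')$ and $\mathcal{G}(X,r')$ by hand; the left-translation component plays exactly the role of your faithful module, separating elements with equal $\lambda$'s. You instead manufacture an honest left brace $B=A\rtimes_\alpha G$ with $A=\mathbb{Z}[G]$ (or $(\mathbb{Z}/2)[G]$ in the finite case) a trivial brace carrying the faithful regular representation, take $(X,r')$ to be the solution associated to $B$, and reduce everything to the single identity $\Soc(B)=\Ker(\lambda^{B})=A\times\{1\}=\Ker(\pi)$ combined with Lemma~\ref{brace-soc}. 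Your argument is complete: the semidirect product of braces (constructed in Section~\ref{Section-Examples}, or checked directly via Proposition~\ref{lambda-main}) is indeed a left brace, your formula $\lambda^{B}_{(a,g)}(a',g')=(\alpha_g(a'),\lambda_g(g'))$ is right, faithfulness pins the socle down to $A\times\{1\}$, and $(\mathbb{Z}/2)[G]$ handles the finiteness clause. What your approach buys is conceptual clarity --- the retraction statement becomes an immediate consequence of the socle lemma, and the diagnosis of why the naive choice $A=(G,+)$, $\alpha=\lambda$ overshoots to $\Ret(G,r)$ is exactly on target. What the paper's construction buys in exchange is economy: its $X$ has $2|G|$ elements versus your $2^{|G|}\cdot|G|$, and it needs no auxiliary brace-theoretic machinery beyond Proposition~\ref{solution}.
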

\begin{proof}
Let $\eta\colon G\longrightarrow \Sym_G$ be the map defined by
$\eta(x)(y)=xy$. Note that $\eta$ is an injective homomorphism of
groups. Let $X=G\times\{ 1,2\}$. Consider the map $r'\colon X\times
X\longrightarrow X\times X$ defined by
$r'(x,y)=(f_x(y),f_{f_x(y)}^{-1}(x))$, where
$$f_{(a,1)}=\id_X,\quad f_{(a,2)}(b,1)=(ab,1),\quad\mbox{and}\quad f_{(a,2)}(b,2)=
(\lambda_a(b),2).$$
Note that $f_x$ is bijective for all $x\in X$. Now it is easy to
check that $f_xf_{f_x^{-1}(y)}=f_yf_{f_y^{-1}(x)}$ and that for each
$y$ the map $x\mapsto f_{f_x(y)}^{-1}(x)$ is bijective. Therefore,
by Proposition~\ref{solution}, $(X,r')$ is a
non-degenerate involutive set-theoretic solution of the Yang-Baxter equation.

Let $\psi\colon G\longrightarrow \Sym_X$ be the map defined by
$\psi(a)=f_{(a,2)}$. Since $\psi(a)(1,1)=(a,1)$, it is clear that
$\psi$ is injective. Note that
$$\psi(a)\psi(b)(c,1)=(abc,1)=\psi(ab)(c,1)$$
and
$$\psi(a)\psi(b)(c,2)=(\lambda_a\lambda_b(c),2)
=(\lambda_{ab}(c),2)=\psi(ab)(c,2).$$ Therefore $\psi$ is an
injective homomorphism of  groups. Furthermore,
$\mathcal{G}(X,r') = \langle f_{x} \mid x\in X \rangle = \langle f_{(a,2)}
\mid a\in G \rangle = \psi (G)$.

Let $\sim$ be the retract relation on $X$ associated to the solution
$(X,r')$. Let $(X/\!\sim, s)$ be the retraction of $(X,r')$. Let
$f\colon X=G\times \{ 1,2\}\longrightarrow G$ be the natural
projection. Note that $x\sim y$ if and only if $f_x=f_y$, and this
is also equivalent to $f(x)=f(y)$. Thus $\widetilde{f}\colon
X/\!\sim\longrightarrow G$, defined by $\widetilde{f}([x])=f(x)$,
for all $x\in X$, is a bijection (here $[x]$ denotes the
$\sim$-class of $x$). Furthermore
$$s([x],[y])=([f_x(y)],[f_{f_x(y)}^{-1}(x)]),$$
for all $x,y\in X$, and
\begin{eqnarray*}
r(\widetilde{f}([(a,2)]),\widetilde{f}([(b,2)]))&=&r(a,b)=(\lambda_a(b),\lambda_{\lambda_a(b)}^{-1}(a))\\
&=&(\widetilde{f}([(\lambda_a(b),2)]),\widetilde{f}([(\lambda_{\lambda_a(b)}^{-1}(a),2)]))\\
&=&(\widetilde{f}([f_{(a,2)}(b,2)]),\widetilde{f}([f_{f_{(a,2)}(b,2)}^{-1}(a,2)])).
\end{eqnarray*}
Since $[(a,1)]=[(1,2)]$ for every $a\in G$ the remaining
compatibility relations follow easily. Hence,
 the retraction of $(X,r')$  is
isomorphic to $(G,r)$.
\end{proof}

The following result is implicit in \cite{rump3}.

\begin{theorem}\label{YBsol}
Let $(X,s)$ be a non-degenerate involutive set-theoretic solution of
the Yang-Baxter  equation. Then $G(X,s)$ is isomorphic to the
multiplicative group of a left brace $H$ such that if $(H,r)$ is the
solution associated to it, then there exists a subset $Y$ of $H$
such that $(Y,r')$, where $r'$ is the restriction of $r$ to $Y^2$,
is a non-degenerate involutive set-theoretic solution of the
Yang-Baxter equation  that is isomorphic to $(X,s)$.
\end{theorem}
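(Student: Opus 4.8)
The plan is to build the brace directly on the structure group and then recover the original solution by restriction. First I would invoke the Etingof--Schedler--Soloviev embedding recalled in Section~\ref{YBequation}: the structure group $G=G(X,s)$ is isomorphic to the subgroup
$$\{(a,\phi(a))\mid a\in\mathbb{Z}^X\}\subseteq\mathbb{Z}^X\rtimes\Sym_X,$$
with $\phi(x)=\sigma_x$ for $x\in X$, and the assignment $x\mapsto(x,\sigma_x)$ induces the isomorphism. Transporting the group law through this bijection, I identify the underlying set of $G$ with $\mathbb{Z}^X$, on which the multiplication becomes $a\cdot b=a+\phi(a)(b)$, where $+$ is the free abelian group operation on $\mathbb{Z}^X$ and $\Sym_X$ acts by permuting basis elements (hence additively). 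The relation $\phi(a\cdot b)=\phi(a)\phi(b)$, forced by membership in the subgroup, records that $\phi$ is a homomorphism from $(\mathbb{Z}^X,\cdot)$ into $\Sym_X\subseteq\Sym_{\mathbb{Z}^X}$.

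Next I would apply Proposition~\ref{lambda-main} to produce the brace. Viewing each $\phi(a)$ as an additive automorphism of $\mathbb{Z}^X$, define $\mu\colon G\longrightarrow\Sym_G$ by $\mu(a)=\phi(a)$; the displayed multiplicativity of $\phi$ shows $\mu$ is a group homomorphism. The compatibility hypothesis is then immediate, since
$$a\,\mu(a)^{-1}(b)=a+\phi(a)\bigl(\phi(a)^{-1}(b)\bigr)=a+b,$$
which is symmetric in $a$ and $b$ because $(\mathbb{Z}^X,+)$ is abelian. Proposition~\ref{lambda-main} therefore equips $G$ with a left brace structure $H$ whose additive group is $(\mathbb{Z}^X,+)$, whose multiplicative group is $G(X,s)$, and in which $\lambda_a=\mu(a)=\phi(a)$. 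Let $(H,r)$ be the associated solution of Lemma~\ref{varphi}(iii).

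Finally I would take $Y=X\subseteq H$, the image of the generators, which is a faithful copy of $X$ since distinct basis elements of $\mathbb{Z}^X$ have distinct first coordinates. For $x,y\in X$ one has $\lambda_x=\sigma_x$, so $\lambda_x(y)=\sigma_x(y)\in X$; writing $z=\sigma_x(y)$, also $\lambda_z=\sigma_z$ permutes $X$, whence $\lambda_{\lambda_x(y)}^{-1}(x)=\sigma_z^{-1}(x)\in X$. Thus $r$ maps $Y^2$ into $Y^2$, and $r'=r|_{Y^2}$ is a non-degenerate involutive solution. To match $r'$ with $s$, I would expand $r^2=\id_{X^2}$:
$$r(\sigma_x(y),\gamma_y(x))=\bigl(\sigma_{\sigma_x(y)}(\gamma_y(x)),\,\gamma_{\gamma_y(x)}(\sigma_x(y))\bigr)=(x,y),$$
so that $\gamma_y(x)=\sigma_{\sigma_x(y)}^{-1}(x)$. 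Comparing with $r'(x,y)=(\sigma_x(y),\sigma_{\sigma_x(y)}^{-1}(x))$ shows $r'=s$ on $X^2$, and the inclusion $X\hookrightarrow H$ is the desired isomorphism $(Y,r')\cong(X,s)$. The step requiring the most care is this last one: one must confirm both that $X$ is genuinely closed under $r$ (using $\lambda_z=\sigma_z$ only for $z\in X$, not for arbitrary elements of $H$) and that the second coordinate of $r$ on $X^2$ reproduces $\gamma$, which hinges entirely on the involutivity identity above and does not require $X$ to be finite.
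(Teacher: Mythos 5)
Your proof is correct and follows essentially the same route as the paper: the ESS embedding of $G(X,s)$ into $\mathbb{Z}^X\rtimes\Sym_X$, the componentwise addition making it a left brace with $\lambda_a=\phi(a)$, and the identification of $Y$ with the image of $X$ via the involutivity identity $\gamma_y(x)=\sigma_{\sigma_x(y)}^{-1}(x)$. The only (harmless) variation is that you certify the brace structure by invoking Proposition~\ref{lambda-main} with $\mu=\phi$, whereas the paper verifies the left brace identity by a direct computation in the semidirect product.
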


\begin{proof}
We know that $G(X,s)$ is isomorphic to a subgroup $H$ of
$\mathbb{Z}^X\rtimes \Sym_X$ of the form
$$\{ (a,\phi(a))\mid a\in \mathbb{Z}^X \},$$
for some function $\phi\colon \mathbb{Z}^X\longrightarrow \Sym_X$
such that $\phi(x)=\sigma_x$ for all $x\in X$, where
$s(x,y)=(\sigma_x(y),\gamma_y(x))$, and the map $x\mapsto
(x,\sigma_x)$ from $X$ to $\mathbb{Z}^X\rtimes \Sym_X$ induces an
isomorphism from $G(X,s)$ to $\{ (a,\phi(a))\mid a\in \mathbb{Z}^X
\}$. We define a sum in $H$ by
$$(a,\phi(a))+(b,\phi(b))=(a+b,\phi(a+b))$$
for all $a,b\in \mathbb{Z}^X$. Note that $(H,+)$ is an abelian group
isomorphic to $\mathbb{Z}^X$. Furthermore for $a,b,c\in
\mathbb{Z}^X$ we have
\begin{eqnarray*}
\lefteqn{
(a,\phi(a))((b,\phi(b))+(c,\phi(c)))+(a,\phi(a))}
\\
&=&(a,\phi(a))((b+c,\phi(b+c)))+(a,\phi(a))\\
&=&(a+\phi(a)(b)+\phi(a)(c),\phi(a+\phi(a)(b)+\phi(a)(c)))+(a,\phi(a))\\
&=&(a+\phi(a)(b)+a+\phi(a)(c),\phi(a+\phi(a)(b)+a+\phi(a)(c)))\\
&=&(a+\phi(a)(b),\phi(a+\phi(a)(b)))+(a+\phi(a)(c),\phi(a+\phi(a)(c)))\\
&=&(a,\phi(a))(b,\phi(b))+(a,\phi(a))(c,\phi(b)).
\end{eqnarray*}
Therefore $H$ is a left brace.

Let $Y=\{ (x,\phi(x))\mid x\in X\}=\{ (x,\sigma_x)\mid x\in X\}$.
For $x,y\in X$ we have
\begin{eqnarray*} \lefteqn{r((x,\sigma_x),(y,\sigma_y))}\\
&=&(\lambda_{(x,\sigma_x)}(y,\sigma_y),\lambda_{\lambda_{(x,\sigma_x)}(y,\sigma_y)}^{-1}(x,\sigma_x))\\
&=&((x,\sigma_x)(y,\sigma_y)-(x,\sigma_x),\lambda_{\lambda_{(x,\sigma_x)}(y,\sigma_y)}^{-1}(x,\sigma_x))\\
&=&((x+\sigma_x(y),\phi(x+\sigma_x(y)))-(x,\sigma_x),\lambda_{\lambda_{(x,\sigma_x)}(y,\sigma_y)}^{-1}(x,\sigma_x))\\
&=&((\sigma_x(y),\sigma_{(\sigma_x(y))}),\lambda_{(\sigma_x(y),\sigma_{(\sigma_x(y))})}^{-1}(x,\sigma_x))\\
&=&((\sigma_x(y),\sigma_{(\sigma_x(y))}),
(\sigma_x(y),\sigma_{(\sigma_x(y))})^{-1}(x,\sigma_x)-(\sigma_x(y),\sigma_{(\sigma_x(y))})^{-1})\\
&=&((\sigma_x(y),\sigma_{(\sigma_x(y))}),
(-\sigma_{(\sigma_x(y))}^{-1}(\sigma_x(y)),\sigma_{(\sigma_x(y))}^{-1})(x,\sigma_x)
-(\sigma_x(y),\sigma_{(\sigma_x(y))})^{-1})\\
&=&((\sigma_x(y),\sigma_{(\sigma_x(y))}),
(\sigma_{(\sigma_x(y))}^{-1}(x),\sigma_{\sigma_{(\sigma_x(y))}^{-1}(x)})).
\end{eqnarray*}
Since
$$s(x,y)=(\sigma_x(y),\sigma_{(\sigma_x(y))}^{-1}(x)),$$
we have that the map $x\mapsto (x,\sigma_x)$ is an isomorphism from
$(X,s)$ to $(Y,r')$, where $r'$ is the restriction of $r$ to
$Y^{2}$.
\end{proof}

\begin{remark}\label{braceIYB}
{\rm Let $(X,s)$ be a non-degenerate involutive set-theoretic
solution of the Yang-Baxter  equation. We have seen in the proof of
Theorem~\ref{YBsol} that  $G(X,s)$ is isomorphic to the
multiplicative group of the left brace $H=\{ (a,\phi(a))\mid a\in
\mathbb{Z}^X \}$. We know that $\phi(x)=\sigma_x$ for all $x\in X$,
where
$$s(x,y)=(\sigma_x(y),\gamma_y(x)).$$
Thus the map $\pi\colon H\longrightarrow \Sym_X$ defined by
$\pi(a,\phi(a))=\phi(a)$, for all $a\in \mathbb{Z}^X$, is a
homomorphism from the multiplicative group of $H$ to $\Sym_X$ such
that $\pi(H)=\mathcal{G}(X,s)$ and $\Ker(\pi)=\{ (a,\phi(a))\in
H\mid \phi(a)=\id_X\}$. Note that for all $(a,\id_X)\in \Ker(\pi)$
and $(b,\phi(b))\in G$ we have,  by Lemma~\ref{lambda} and
Lemma~\ref{varphi},
\begin{eqnarray*}
\lambda_{(b,\phi(b))}(a,\id_X)&=&(b,\phi(b))(a,\id_X)\lambda_{(-a,\id_X)}((b,\phi(b))^{-1})\\
&=&(b+\phi(b)(a),\phi(b))((-a+\phi(b)^{-1}(-b),\phi(b)^{-1})+(a,\id_X))\\
&=&(b+\phi(b)(a),\phi(b))(\phi(b)^{-1}(-b),\phi(b)^{-1})\\
&=&(\phi(b)(a),\id_X)\in \Ker(\pi).
\end{eqnarray*}
Hence $\Ker(\pi)$ is an ideal of the left brace $H$. Therefore
$\mathcal{G}(X,s)$ is isomorphic to the  multiplicative group of the
left brace $H/\Ker(\pi)$. }
\end{remark}

An easy consequence of  Theorem~\ref{bracesol} and
Remark~\ref{braceIYB} is the following result (see \cite[Theorem
2.1]{CJR}).
\begin{corollary} \label{finite-group-IYB}
A finite group $G$ is an IYB group if and only if it is the
multiplicative group of a finite left brace.
\end{corollary}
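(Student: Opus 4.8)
The plan is to prove Corollary~\ref{finite-group-IYB} by establishing both implications using the two preceding results, Theorem~\ref{bracesol} and Remark~\ref{braceIYB}, which together already contain essentially all the structural content. The only thing to verify carefully is that the finiteness hypotheses transfer correctly through the constructions.

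For the forward direction, suppose $G$ is a finite IYB group. By definition, $G\cong \mathcal{G}(X,s)$ for some finite non-degenerate involutive set-theoretic solution $(X,s)$ of the Yang-Baxter equation, with $X$ finite. I would invoke the construction in the proof of Theorem~\ref{YBsol}, which produces a left brace $H=\{(a,\phi(a))\mid a\in\mathbb{Z}^X\}$ whose multiplicative group is isomorphic to $G(X,s)$. Then Remark~\ref{braceIYB} shows that $\mathcal{G}(X,s)$ is isomorphic to the multiplicative group of the quotient left brace $H/\Ker(\pi)$, where $\pi(a,\phi(a))=\phi(a)$. Since $X$ is finite, $\pi(H)=\mathcal{G}(X,s)\subseteq \Sym_X$ is finite, so $H/\Ker(\pi)$ is a left brace whose multiplicative group is isomorphic to the finite group $\mathcal{G}(X,s)\cong G$. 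The key point is that $H/\Ker(\pi)$ is \emph{finite}: its underlying additive (equivalently multiplicative) group has the same cardinality as $\pi(H)$, which is finite because it embeds in the finite symmetric group $\Sym_X$. Thus $G$ is the multiplicative group of a finite left brace.

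For the converse, suppose $G$ is the multiplicative group of a finite left brace $B$. By Theorem~\ref{bracesol} applied to $B$, there is a non-degenerate involutive set-theoretic solution $(X,r')$ of the Yang-Baxter equation such that $\mathcal{G}(X,r')$ is isomorphic to the multiplicative group of $B$, namely $G$; and since $B$ is finite, the theorem guarantees that $X$ can be taken finite. Therefore $G\cong\mathcal{G}(X,r')$ for a finite solution $(X,r')$, which is precisely the definition of $G$ being an IYB group. This direction is immediate once Theorem~\ref{bracesol} is in hand, since that theorem was stated with exactly the finiteness clause needed here.

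The step I expect to require the most care is the finiteness bookkeeping in the forward direction: the brace $H$ constructed in Theorem~\ref{YBsol} is itself infinite (its additive group is the free abelian group $\mathbb{Z}^X$), so it is essential to pass to the quotient $H/\Ker(\pi)$ and to confirm that this quotient is finite. The rest reduces to reading off the isomorphisms already recorded in Theorem~\ref{YBsol}, Theorem~\ref{bracesol}, and Remark~\ref{braceIYB}, so no genuinely new computation is needed; the corollary is a synthesis of those three statements together with the observation that $\mathcal{G}(X,r)\le\Sym_X$ forces finiteness of the relevant brace whenever $X$ is finite.
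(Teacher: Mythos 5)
Your proof is correct and follows exactly the route the paper intends: the paper states the corollary as "an easy consequence of Theorem~\ref{bracesol} and Remark~\ref{braceIYB}" without further detail, and you have supplied precisely that argument, including the one point genuinely worth checking, namely that the quotient brace $H/\Ker(\pi)$ is finite because its multiplicative group is $\mathcal{G}(X,s)\leq \Sym_X$. No issues.
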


\section{Groups of $IG$-type and braces} \label{groups}

In this section we give a natural generalization of groups of
$I$-type (considered as subgroups of semidirect products
$\mathbb{Z}^{X}\rtimes \Sym_{X}$) and we show that these are in a
one-to-one correspondence with left braces.  A connection of this
type in the context of regular permutation subgroups of the affine
group $AGL(V)$ on a vector space $V$ was given by Catino and Rizzo
\cite{CatRiz}.

\begin{theorem}
Let $(A,+)$ be an  abelian group. Let
$$\mathcal{B}(A) =\{ (A,+,\cdot ) \mid (A,+,\cdot) \mbox{ is a left brace} \}$$ and
$$\mathcal{S}(A) =\{ G \mid G \mbox{ is a subgroup of } A\rtimes \Aut (A) \mbox{ of the form }
G=\{ (a, \phi (a) ) \mid a\in A\} \} .$$ The map $f:\mathcal{B}(A)
\rightarrow \mathcal{S}(A)$  defined by
 $$f (A,+,\cdot ) =\{ (a,\lambda_{a}) \mid a\in A\}$$
 is  bijective.
\end{theorem}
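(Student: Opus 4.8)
The plan is to show that the map $f$ is well-defined, injective, and surjective by using Proposition~\ref{lambda-main} as the main bridge between the brace structure and the subgroup structure. First I would verify that $f$ lands in $\mathcal{S}(A)$: given a left brace $(A,+,\cdot)$, Lemma~\ref{lambda} tells us that each $\lambda_a$ is an automorphism of $(A,+)$ and that $\lambda\colon a\mapsto\lambda_a$ is a group homomorphism from the multiplicative group to $\Aut(A)$. Hence the set $G=\{(a,\lambda_a)\mid a\in A\}$ is closed under the semidirect product multiplication, since $(a,\lambda_a)(b,\lambda_b)=(a+\lambda_a(b),\lambda_a\lambda_b)=(ab,\lambda_{ab})$ using the identity $a+\lambda_a(b)=a+ab-a=ab$; it contains the identity and is closed under inverses, so it is a subgroup of $A\rtimes\Aut(A)$ of exactly the required form, with $\phi=\lambda$.

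For injectivity, suppose two left braces $(A,+,\cdot_1)$ and $(A,+,\cdot_2)$ on the same additive group give the same element of $\mathcal{S}(A)$, i.e. $\{(a,\lambda_a^{(1)})\mid a\in A\}=\{(a,\lambda_a^{(2)})\mid a\in A\}$. Comparing first coordinates forces $\lambda_a^{(1)}=\lambda_a^{(2)}$ for every $a$, and since in any left brace the multiplication is recovered from $\lambda$ via $ab=a+\lambda_a(b)$, the two multiplications coincide. Thus $f$ is injective.

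The surjectivity is where the real content lies, and it is the step I expect to be the main obstacle. Given $G=\{(a,\phi(a))\mid a\in A\}\in\mathcal{S}(A)$, closure under the semidirect product multiplication forces $\phi(a+\phi(a)(b))=\phi(a)\phi(b)$ for all $a,b$, so in particular $\phi\colon(G,\cdot)\to\Aut(A)$ is a homomorphism after transporting the group law of $G$ to $A$ via the first-coordinate bijection. The idea is to define a multiplication on $A$ by $a\cdot b=a+\phi(a)(b)$, making $(A,\cdot)$ a group isomorphic to $G$, and then to check that $\phi$ plays the role of $\mu$ in Proposition~\ref{lambda-main}. The crucial hypothesis to verify is the compatibility $a\,\phi(a)^{-1}(b)=b\,\phi(b)^{-1}(a)$ (in the $\cdot$-multiplication), equivalently the symmetry of $a+\phi(a)(\phi(a)^{-1}(b))$-type expressions; this is precisely where one uses that $(A,+)$ is abelian together with the homomorphism property of $\phi$. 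Once this identity is established, Proposition~\ref{lambda-main} yields a left brace structure $(A,+,\cdot)$ with $\lambda_a=\phi(a)$, and then $f(A,+,\cdot)=\{(a,\phi(a))\mid a\in A\}=G$, giving surjectivity. The delicate point is checking that the additive group of the brace produced by Proposition~\ref{lambda-main} is the \emph{original} $(A,+)$ rather than some other abelian group; this follows because that proposition builds the addition as $x+'y=x\,\mu(x)^{-1}(y)=x\cdot\phi(x)^{-1}(y)$, and substituting $a\cdot b=a+\phi(a)(b)$ shows $x+'y=x+y$, so the two additions agree.
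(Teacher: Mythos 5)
Your proposal is correct. The construction at its core is the same as the paper's: both directions hinge on the formula $a\cdot b=a+\phi(a)(b)$ (equivalently $ab=a+\lambda_a(b)$), and your well-definedness computation $(a,\lambda_a)(b,\lambda_b)=(ab,\lambda_{ab})$ is exactly the paper's. Where you diverge is in how surjectivity is certified: the paper verifies directly that $(A,+,\cdot)$ with $a\cdot b=a+\phi(a)(b)$ is a left brace, which is very short precisely because $\phi(a)\in\Aut(A)$ is already an additive automorphism, so $a\cdot(b+c)+a=a+\phi(a)(b)+a+\phi(a)(c)=a\cdot b+a\cdot c$ falls out in one line. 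You instead route through Proposition~\ref{lambda-main}, observing that the compatibility condition $x\,\phi(x)^{-1}(y)=y\,\phi(y)^{-1}(x)$ collapses to $x+y=y+x$ and that the addition produced there, $x\,\phi(x)^{-1}(y)=x+\phi(x)(\phi(x)^{-1}(y))=x+y$, recovers the original one. This is a valid and conceptually pleasant reduction (it isolates commutativity of $+$ as the only real hypothesis), but it is slightly less economical here, since Proposition~\ref{lambda-main} has to rederive additivity of $\mu(x)$ from scratch, information that is free in this setting; you also correctly flagged and resolved the one genuine subtlety of that route, namely that the brace handed back must have the original additive group. Your explicit injectivity argument fills in a step the paper dismisses as routine, and it is sound.
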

\begin{proof}
Let $a,b\in A$. Then
$$(a,\lambda_a)(b,\lambda_b)=(a+\lambda_{a}(b),\lambda_a\lambda_b)=(a+ab-a, \lambda_{ab})= (ab,\lambda_{ab})$$
and \begin{eqnarray*} (a,\lambda_a)^{-1}&=& (\lambda_{a}^{-1}(-a) ,
\lambda_{a}^{-1}) = (-\lambda_{a^{-1}}(a) , \lambda_{a^{-1}})\\
&=& ( -a^{-1}a+a^{-1}, \lambda_{{a}^{-1}}) =(a^{-1} ,
\lambda_{a^{-1}}).\end{eqnarray*} Hence $\{ (a,\lambda_{a}) \mid
a\in A\} \in \mathcal{S}(A)$ and thus $f$ is well defined.

Consider $G=\{ (a, \phi (a)) \mid a\in A\} \in \mathcal{S}(A)$. We
define an operation $\cdot$ on $A$ by
 $$a\cdot b = a+\phi (a) (b),$$
for $a,b\in A$. We prove that $(A,+,\cdot )$ is a left brace.  For
this we first show that $(A,\cdot )$ is a group. Let $a,b,c\in A$.
Since $(a, \phi (a)) \, (b,\phi (b)) = (a+\phi (a) (b), \phi (a) \,
\phi (b))\in G$ we have that $\phi( a+\phi (a) (b)) =\phi (a)\, \phi
(b)$. Thus $(a\cdot b ) \cdot c =( a+\phi (a)(b)) \cdot c = a+\phi
(a) (b) +\phi (a+\phi (a)(b))(c) =a+\phi (a)(b) +\phi (a)\, \phi (b)
(c)$ and $a\cdot (b  \cdot c)  = a\cdot ( b+\phi (b)(c) ) =a+ \phi
(a) (b+\phi (b)(c)) = a+ \phi (a)(b) + \phi (a) \, \phi (b)(c)$. So
$(a\cdot b ) \cdot c =a\cdot (b \cdot c )$. It is easily verified
that $0$ is the identity element of the semigroup $(A,\cdot )$.
Notice $\phi (a)^{-1}(-a)=a^{-1}$ in $(A,\cdot)$. Hence, $(A,\cdot
)$ is a group. We also have
$$a\cdot (b+c) + a = a +\phi (a) (b+c) + a = a + \phi (a)(b) +a +\phi (a) (c) =
a\cdot b + a\cdot c .$$ Hence  $(A,+,\cdot )$ is a left brace.

 So, we have constructed a map $\mathcal{S}(A) \rightarrow \mathcal{B}(A)$ and it
 is easy to verify that this is the inverse of the map $f$.
\end{proof}

Note that if $G=\{ (a,\phi(a)) \mid a\in A\} \in \mathcal{S}(A)$
then from the proof it follows that the bijective map $G\rightarrow
A: (a,\phi (a)) \mapsto a$ is a multiplicative homomorphism of the
group $G$ to the  multiplicative group of the left brace
$f^{-1}(G)$. We can now define a sum on $G$ using this bijection,
that is $(a,\phi (a)) + (b,\phi (b)) =(a+b, \phi (a+b))$ for $a,b\in
A$, making this bijection an isomorphism of left braces.

In case  the group  $\sigma(A)$ is finite then the   group $G$ has
been investigated in \cite{goffa} and it was  called a group of
$IG$-type. Thus every group of $IG$-type is isomorphic to the
multiplicative group of a left brace. Obviously, groups of $I$-type
are also of $IG$-type.

 We can now  characterize
groups of $I$-type as follows.

\begin{proposition} \label{Itype}  A group $G$ is of $I$-type if and only if
it is isomorphic to the multiplicative group of a left brace $B$
such that the additive group of $B$ is a free abelian group with a
finite basis $X$ such that $\lambda_{x}(y)\in X$ for all $x,y\in X$.
\end{proposition}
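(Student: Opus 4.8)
The plan is to establish both directions by translating between the two descriptions of an $I$-type group: as a subgroup of $\mathbb{Z}^X\rtimes\Sym_X$ of the form $\{(a,\phi(a))\mid a\in\mathbb{Z}^X\}$ coming from a solution $(X,r)$, and as the multiplicative group of a left brace on a finitely generated free abelian additive group. The main tool is the correspondence recorded just above between $\mathcal{B}(A)$ and $\mathcal{S}(A)$: if the additive group of the brace $B$ is the free abelian group $\mathbb{Z}^X$, then $f(B)=\{(a,\lambda_a)\mid a\in\mathbb{Z}^X\}$ is precisely a group of the shape occurring in the definition of $I$-type, with $\phi=\lambda$. So the whole proposition reduces to matching the extra combinatorial condition ``$\lambda_x(y)\in X$ for all $x,y\in X$'' with the $I$-structure/$I$-type defining property.

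First I would prove the ``if'' direction. Suppose $B$ is a left brace whose additive group is free abelian with finite basis $X=\{x_1,\dots,x_n\}$ and such that $\lambda_x(y)\in X$ for all $x,y\in X$. Since each $\lambda_x$ is an automorphism of $(B,+)$ by Lemma~\ref{lambda}(i) and it permutes the basis $X$, we have $\lambda_x\in\Sym_X\subseteq\Aut(\mathbb{Z}^X)$ for $x\in X$, and by Lemma~\ref{lambda}(ii) the map $\lambda\colon(B,\cdot)\to\Sym_X$ is a group homomorphism, so $\lambda_a\in\Sym_X$ for all $a\in B$. Thus the multiplicative group $(B,\cdot)$ is carried by $f$ to a subgroup of $\mathbb{Z}^X\rtimes\Sym_X$ of the required form. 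To see it is genuinely of $I$-type, I would exhibit the left $I$-structure $v\colon\FaM_n\to S$ on the generated monoid $S$: the key computation is that multiplication by $x_i$ in the brace sends the additive basis to $x_i\cdot x_j=x_i+\lambda_{x_i}(x_j)$, so the set $\{x_1\cdot a,\dots,x_n\cdot a\}$ equals $\{a+\lambda_{?}(\cdot)\}$ reshuffled; the hypothesis $\lambda_x(y)\in X$ is exactly what guarantees the defining relation $\{v(u_ia)\}=\{x_iv(a)\}$ of a left $I$-structure, via the associated solution $r(x,y)=(\lambda_x(y),\lambda_{\lambda_x(y)}^{-1}(x))$ of Lemma~\ref{varphi}(iii) restricted to $X$.

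For the ``only if'' direction I would start from a group of $I$-type $G\cong\{(a,\phi(a))\mid a\in\mathbb{Z}^X\}$ arising from a finite non-degenerate involutive solution $(X,r)$ with $r(x,y)=(\sigma_x(y),\gamma_y(x))$ and $\phi(x)=\sigma_x$. By the correspondence above, $G$ is the multiplicative group of the left brace $B=f^{-1}(G)$ whose additive group is $\mathbb{Z}^X$, free abelian with finite basis $X$, and in this brace $\lambda_x=\sigma_x$ for $x\in X$. It then remains to check $\lambda_x(y)=\sigma_x(y)\in X$ for all $x,y\in X$, which holds because $\sigma_x$ is a permutation of the finite generating set $X$ (each $\sigma_x\in\Sym_X$). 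I expect this direction to be the more delicate one at the level of bookkeeping: one must verify that the brace structure transported from $G$ via $f^{-1}$ really is the one induced by the solution, i.e.\ that $\phi$ restricted to basis elements agrees with the $\sigma_x$ and that the addition $(a,\phi(a))+(b,\phi(b))=(a+b,\phi(a+b))$ is well defined on $G$ — this is precisely the content guaranteed by \cite[Propositions 2.4, 2.5]{ESS} quoted in Section~\ref{YBequation}. The main obstacle, and the point I would treat most carefully, is confirming that the condition ``$\lambda_x(y)\in X$ for $x,y\in X$'' is equivalent to the existence of a left $I$-structure rather than merely the group being a subgroup of $\mathbb{Z}^X\rtimes\Sym_X$ of the prescribed shape; the former is strictly stronger and is what pins down $I$-type among all groups of $IG$-type.
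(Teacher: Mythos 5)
Your ``only if'' direction is essentially the paper's argument: transport the addition $(a,\phi(a))+(b,\phi(b))=(a+b,\phi(a+b))$ onto $G$, take $X=\{(x,\phi(x))\mid x\in Y\}$ as the additive basis, and compute $\lambda_{(x,\phi(x))}(y,\phi(y))=(\phi(x)(y),\phi(\phi(x)(y)))\in X$. That part is fine.

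The ``if'' direction has a genuine gap. You assert that since $\lambda_x\in\Sym_X$ for every $x\in X$ and $\lambda\colon (B,\cdot)\to\Aut(B,+)$ is a group homomorphism, it follows that $\lambda_a\in\Sym_X$ for \emph{all} $a\in B$. This is a non sequitur: $X$ is a basis of the \emph{additive} group of $B$, so a general element $a=\sum_{x\in X}z_x x$ is an additive combination of basis elements, not a product of them in the multiplicative group, and the homomorphism property of $\lambda$ only controls $\lambda_a$ for $a$ in the multiplicative subgroup generated by $X$ --- which is not a priori all of $B$. Proving that every $\lambda_a$ permutes $X$ is precisely the nontrivial content of this direction, and the paper devotes most of its proof to it: one first checks that $\lambda_x^{-1}(y)\in X$, that $-x^{-1}=\lambda_{x^{-1}}(x)\in X$, and hence (using finiteness of $X$ to invert the map $x\mapsto -x^{-1}$) that $\lambda_{-x}(y)\in X$ for $x,y\in X$; one then inducts on $m_a=\sum_{x\in X}|z_x|$ via the identity $c+d=c\lambda_{c^{-1}}(d)$, which gives factorizations $\lambda_a=\lambda_{-x}\lambda_{\lambda_{-x}^{-1}(a+x)}$ or $\lambda_a=\lambda_{x}\lambda_{\lambda_{x}^{-1}(a-x)}$ with strictly smaller $m$. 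Without this step your argument only places the multiplicative subgroup generated by $X$ inside $\mathbb{Z}^X\rtimes\Sym_X$, not $B$ itself. (Your closing paragraph locates the difficulty in separating $I$-type from $IG$-type, but the hypothesis $\lambda_x(y)\in X$ already accomplishes that once the induction is in place; the real obstacle is propagating the permutation property from the additive basis to all of $B$.)
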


\begin{proof}
Suppose $G$ is a group of $I$-type. We may assume that $G$ is a
subgroup of $\mathbb{Z}^Y\rtimes\Sym_Y$, for some finite set $Y$, of the form
$$G= \{ (u,\phi(u))\mid u\in \mathbb{Z}^Y\},$$
and  for some mapping $\phi\colon \mathbb{Z}^Y\longrightarrow
\Sym_Y$. By the comment given before the proposition, we know that
$G$ is a left brace for the addition
$$(a,\phi(a))+(b,\phi(b))=(a+b,\phi(a+b)),$$
with  $a,b\in \mathbb{Z}^Y$.
Its additive
group is a free abelian group with basis $X=\{ (x,\phi(x))\mid x\in
Y\}$ and
\begin{eqnarray*}
\lambda_{(x,\phi(x))}(y,\phi(y))&=&
(x,\phi(x))(y,\phi(y))-(x,\phi(x))\\
&=&(x+\phi(x)(y),\phi(x+\phi(x)(y)))-(x,\phi(x))\\
&=&(\phi(x)(y),\phi(\phi(x)(y)))\in X.
\end{eqnarray*}

Conversely, suppose that $B$ is a left brace such  that its additive
group  is a free abelian group with a finite basis $X$ such that
$\lambda_{x}(y)\in X$ for all $x,y\in X$. Hence
$\lambda_{x}^{-1}(y)\in X$, for all $x,y\in X$. By
Lemma~\ref{lambda}, $\lambda_{x}^{-1}=\lambda_{x^{-1}}$. In
particular, $\lambda_{x^{-1}}(x)=1-x^{-1}=-x^{-1}\in X$ for all
$x\in X$. Since $X$ is finite, the map $x\mapsto -x^{-1}$ is a
permutation of $X$. It follows that $(-x)^{-1}\in X$ (the inverse
image of $x$) for all $x\in X$. Therefore,
$\lambda_{-x}(y)=\lambda_{(-x)^{-1}}^{-1}(y)\in X$ for all $x,y\in
X$. Let $a\in B$. We know from Lemma~\ref{lambda} that $\lambda_a$
is an automorphism of the additive group of the brace $B$.

Suppose that $a=\sum_{x\in X} z_xx$ for some integers $z_x$. We
shall prove that $\lambda_a(x)\in X$, for all $x\in X$, by induction
on $m_a=\sum_{x\in X}|z_x|$. If $m_a=1$, then the result is clear.
Suppose that $m_a>1$ and that $\lambda_b(x)\in X$, for all $x\in X$
and all $b\in B$ such that $m_b<m_a$. Clearly there exists $x\in
X$ such that either $m_{a+x}<m_a$ or $m_{a-x}<m_a$. Note that, for
all $c,d\in B$,
\begin{eqnarray} \label{sum-lambda}
c+d=c+d-1=c+cc^{-1}d-cc^{-1}=c(c^{-1}d-c^{-1})=c\lambda_{c^{-1}}(d).
\end{eqnarray}

Suppose that $m_{a+x}<m_a$. Because of (\ref{sum-lambda}) we get that
$$\lambda_{a}=\lambda_{a+x-x}=\lambda_{(-x)\lambda_{-x}^{-1}(a+x)}
=\lambda_{-x}\lambda_{\lambda_{-x}^{-1}(a+x)}.$$ Since
$\lambda_{-x}(y)\in X$ and $\lambda_{-x}$ is an automorphism of the
additive group of $B$, we have that
$m_{\lambda_{-x}^{-1}(a+x)}=m_{a+x}<m_a$. Hence by induction
hypothesis, $\lambda_{\lambda_{-x}^{-1}(a+x)}(y)\in X$, and
therefore $\lambda_a(y)\in X$, in this case.

Suppose that $m_{a-x}<m_a$. Again by (\ref{sum-lambda}) we have
$$\lambda_{a}=\lambda_{a-x+x}=\lambda_{x\lambda_{x}^{-1}(a-x)}
=\lambda_{x}\lambda_{\lambda_{x}^{-1}(a-x)}.$$ Since
$\lambda_{x}(y)\in X$ and $\lambda_{x}$ is an automorphism of the
additive group of $B$, we have that
$m_{\lambda_{x}^{-1}(a-x)}=m_{a-x}<m_a$. Hence by induction
hypothesis, $\lambda_{\lambda_{x}^{-1}(a-x)}(y)\in X$, and therefore
$\lambda_a(y)\in X$, in this case.

Thus every automorphism $\lambda_a$ of the additive group of $B$ is
induced by a permutation of the finite set $X$. Note that for all
$a,b\in B$
$$(ab,\lambda_{ab})=(a+\lambda_a(b),\lambda_a\lambda_b)=(a,\lambda_a)(b,\lambda_b).$$
Therefore the map $f:B\longrightarrow \{(a,\lambda_a)\mid a\in B \}$
defined by $f(a)=(a,\lambda_a)$ is an isomorphism of the
multiplicative group of $B$ and the subgroup $\{(a,\lambda_a)\mid
a\in B \}$ of the semidirect product $B\rtimes \Aut(B,+)$. Hence the
 multiplicative group of $B$ is of $I$-type.
\end{proof}

Note also that if $G$ is a left brace, then the identity map $\id
\colon G\longrightarrow G$ is a bijective $1$-cocycle of the group
$G$ with coefficients in the left $G$-module $(G,+)$ with respect
the action given by $\lambda\colon G\longrightarrow \Aut (G,+).$
This is because for all $a,b\in G$ we have $ab=a+\lambda_a(b).$
Furthermore, if $H$ is a group, $A$ is a left $H$-module and
$\pi\colon H\longrightarrow A$ is a bijective $1$-cocycle, then we
can define a sum $+$ in $H$ by
$$a+b=\pi^{-1}(\pi(a)+\pi(b)),$$
for all $a,b\in H$; and the group $H$ with this sum is a left brace.
This is because for all $a,b,c\in H$ we have that
\begin{eqnarray*}
\pi(a(b+c)+a)&=&\pi(a(b+c))+\pi(a)\\
&=&\pi(a)+ a\pi(b+c)+\pi(a)\\
&=&\pi(a)+ a(\pi(b)+\pi(c))+\pi(a)\\
&=&\pi(a)+a\pi(b)+a\pi(c)+\pi(a)\\
&=&\pi(ab)+\pi(ac).
\end{eqnarray*}
This gives us a bijective correspondence between left braces and
groups with a bijective $1$-cocycle with respect to a left action (see
\cite{CJR,ESS}). The latter class of groups gives rise to  groups of central
type which play an important role in the theory of finite dimensional central
simple algebras (\cite{DavidGinosar,EtingofGelaki}). This class
of groups  with a bijective $1$-cocycle also appeared in the context
of left-invariant  affine structures on Lie groups \cite{Burde}.

\section{Socle of a brace and multipermutation solutions} \label{soclemulti}

Let $G$ be a left brace. The socle of $G$ is
$$\Soc (G)=\{ a\in G\mid \lambda_a=\id\}=\{ a\in G\mid ab=a+b\;\mbox{ for all }\; b\in G\}.$$
By Lemma~\ref{lambda}, we have that $\Soc(G)$ is a normal subgroup
of the multiplicative group of $G$. Let $a\in G$ and $b\in \Soc(G)$.
Then by Lemma~\ref{varphi},
$$\lambda_a\lambda_{\lambda_a^{-1}(b)}=\lambda_b\lambda_{\lambda_b^{-1}(a)}=\lambda_a.$$
Hence $\lambda_{\lambda_a^{-1}(b)}=\id$ and thus
$\lambda_a^{-1}(b)\in\Soc(G)$. Therefore $\Soc(G)$ is an ideal of
the left brace $G$.

\begin{lemma} \label{brace-soc}  (Rump \cite[Proposition
7]{rump3}) Let $G$ be a left brace and  let $(G,r)$ be the solution
of the Yang-Baxter equation associated to $G$ (as in
Lemma~\ref{varphi}). If $(G/\Soc (G), r')$ is the solution of the
Yang-Baxter equation associated to the left brace $G/\Soc (G)$ then
$(G/\Soc (G), r') = \Ret (G,r)$.
\end{lemma}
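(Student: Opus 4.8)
The plan is to show that the retract relation $\sim$ on $G$ associated to the solution $(G,r)$ coincides with the cosets of $\Soc(G)$, and then to verify that the induced solution on $G/\!\sim$ agrees with the solution $r'$ associated to the quotient brace $G/\Soc(G)$. Recall from Lemma~\ref{varphi} that $r(x,y)=(\lambda_x(y),\lambda_{\lambda_x(y)}^{-1}(x))$, so in the notation of the retraction the first component map is $\sigma_x=\lambda_x$. Thus $x\sim y$ means exactly $\lambda_x=\lambda_y$. First I would compute, using Lemma~\ref{lambda}(ii), that $\lambda_x=\lambda_y$ is equivalent to $\lambda_{x^{-1}y}=\lambda_x^{-1}\lambda_y=\id$, i.e.\ to $x^{-1}y\in\Soc(G)$, which says precisely that $x$ and $y$ lie in the same coset of the normal subgroup $\Soc(G)$ in the multiplicative group. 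Hence the underlying set $G/\!\sim$ of $\Ret(G,r)$ is canonically the set $G/\Soc(G)$, and the quotient map $\pi\colon G\to G/\Soc(G)$ realizes the class map $x\mapsto[x]$.

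Next I would verify that this identification is compatible with the two solutions. Write $\bar a=\pi(a)$ for the image in the brace $G/\Soc(G)$, and let $\bar\lambda$ denote the lambda-maps of the quotient brace; since $\pi$ is a brace homomorphism we have $\bar\lambda_{\bar a}(\bar b)=\overline{\lambda_a(b)}$. The solution associated to $G/\Soc(G)$ is then
\begin{eqnarray*}
r'(\bar x,\bar y)&=&(\bar\lambda_{\bar x}(\bar y),\bar\lambda_{\bar\lambda_{\bar x}(\bar y)}^{-1}(\bar x))
=(\overline{\lambda_x(y)},\overline{\lambda_{\lambda_x(y)}^{-1}(x)}),
\end{eqnarray*}
which is exactly the image under $\pi$ of the two components of $r(x,y)$. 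On the other hand, the retracted solution is by definition $\tilde r([x],[y])=([\lambda_x(y)],[\lambda_{\lambda_x(y)}^{-1}(x)])$. Under the bijection $[x]\leftrightarrow\bar x$ these two formulas agree term by term, so $\tilde r$ and $r'$ are the same map. This gives $(G/\Soc(G),r')=\Ret(G,r)$.

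The only point requiring care, and the main obstacle, is the well-definedness that makes the two descriptions literally coincide rather than merely correspond under the identification: I must check that the quotient brace operations are consistently transported through $\pi$, in particular that $\bar\lambda_{\bar x}$ is well defined on $G/\Soc(G)$ and that $\Soc(G)$ being an \emph{ideal} (established in the paragraph preceding the lemma) is what guarantees $\pi$ is a brace homomorphism so that $\bar\lambda_{\pi(a)}\circ\pi=\pi\circ\lambda_a$. Once the ideal property is invoked, the remaining steps are the routine substitutions displayed above, and no further difficulty arises.
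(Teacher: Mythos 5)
Your argument is correct and follows essentially the same route as the paper's proof: identify the retract classes with the cosets of $\Soc(G)$ via $\lambda_x=\lambda_y\Leftrightarrow\lambda_{x^{-1}y}=\id$, then match the two formulas for the induced solution, using that the projection onto the quotient brace intertwines the $\lambda$-maps (the paper carries out this last step by the explicit computation $\overline{ab-a}=\bar a\bar b-\bar a$ rather than by citing the homomorphism property, but the content is identical). No gaps.
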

\begin{proof}
Recall that the retract relation $\sim$ associated to the solution
$(G,r)$ is defined by $a\sim b $ if and only if $\lambda_{a}
=\lambda_{b}$. Then the $\sim$-equivalence class of $a\in G$ is
 \begin{eqnarray*}
 \{ b\in G \mid \lambda_{a}=\lambda_{b}\} &=& \{ b\in G \mid \lambda_{a} \lambda_{b}^{-1} =\id \}\\
  &=& \{ b\in G \mid \lambda_{ab^{-1}} =\id \}\\
  &=& \{ b \in G \mid ab^{-1} \in \Soc (G)\}
 \end{eqnarray*}
Hence, as sets, $G/\sim \, = \, G/\Soc (G)$. Let $(\widetilde{G},
\widetilde{r}) =\Ret (G,r)$. Then $\widetilde{G} =G/\Soc (G)$.
Denote by $\bar{a}$ the element $a\Soc (G) \in G/\Soc (G)$. Recall
from Lemma~\ref{varphi} that $r(a,b)= (\lambda_{a}(b),
\lambda_{\lambda_{a}(b)}^{-1} (a) )$. Hence
\begin{eqnarray*}
\widetilde{r}(\bar{a},\bar{b}) &=& (\overline{\lambda_{a}(b)}, \overline{\lambda_{\lambda_{a}(b)}^{-1} (a)})\\
 &=& (\overline{ab-a}, \overline{(ab-a)^{-1}a-(ab-a)^{-1}})\\
 &=& (\bar{a}\bar{b} -\bar{a}, (\bar{a}\bar{b}-\bar{a})^{-1}\bar{a}-(\bar{a}\bar{b}-\bar{a})^{-1}\\
 &=& (\lambda_{\bar{a}}(\bar{b}), \lambda_{\lambda_{\bar{a}}(\bar{b})}^{-1} (\bar{a}))
\end{eqnarray*}
Hence, $\Ret (G,r) = (\widetilde{G},\widetilde{r}) =(G/\Soc (G),
r')$ is the solution of the Yang-Baxter equation associated to the
left brace $G/\Soc (G)$.
\end{proof}

The following result is implicit in \cite[page 154]{rump3}.

\begin{proposition}\label{radical}
If $G$ is a finite non-trivial two-sided brace, then $\Soc(G)\neq \{
1\}$. Furthermore, the solution $(G,r)$ of the Yang-Baxter equation
associated to $G$ is a multipermutation solution.
\end{proposition}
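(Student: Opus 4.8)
The plan is to prove the two assertions in turn, using the identification from Proposition~\ref{rad} between a two-sided brace $G$ and a radical ring $(G,+,*)$, together with the characterization of $\Soc(G)$ and the description of $\Ret(G,r)$ from Lemma~\ref{brace-soc}. For the first claim, the key observation is that for a two-sided brace the product $a*b = ab-a-b$ makes $(G,+,*)$ a radical ring, so $*$ is the nilpotent multiplication of a nilpotent ring when $G$ is finite. I would relate the socle to this ring structure: unwinding the definitions, $a\in\Soc(G)$ means $ab=a+b$ for all $b$, equivalently $a*b=0$ for all $b$, so $\Soc(G)$ is precisely the left annihilator of $G$ in the ring $(G,+,*)$. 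Since a finite radical ring is nilpotent, the last nonzero power of the ideal $(G,*)$ is annihilated by $G$ on both sides, and in particular it lies in $\Soc(G)$. Thus for $G$ nontrivial the nilpotency of $*$ forces $\Soc(G)\neq\{1\}$, which is the first statement.

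For the second statement I would iterate. The point is that $\Ret(G,r)$ is, by Lemma~\ref{brace-soc}, the solution associated to the left brace $G/\Soc(G)$. Here I would need the quotient $G/\Soc(G)$ to again be a \emph{two-sided} brace, so that the first part applies to it as well; this should follow from the fact that the quotient of a radical ring by an ideal is again a radical ring (the ideal $\Soc(G)$ corresponds, under Proposition~\ref{rad} and the identification of brace-ideals with ring-ideals established earlier in Section~\ref{definitions}, to the annihilator ideal of the ring, which is a two-sided ring ideal). Granting this, each retraction strictly decreases the cardinality as long as the brace is nontrivial, since $\Soc(G)\neq\{1\}$ means $|G/\Soc(G)|<|G|$.

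Therefore the sequence of two-sided braces
\begin{equation*}
G,\quad G/\Soc(G),\quad (G/\Soc(G))/\Soc(G/\Soc(G)),\quad\dots
\end{equation*}
has strictly decreasing finite order at each step until it reaches the trivial brace. Translating through Lemma~\ref{brace-soc}, this says that $\Ret^m(G,r)$ has cardinality $1$ for some finite $m$, which is exactly the definition of $(G,r)$ being a multipermutation solution.

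The main obstacle I anticipate is verifying cleanly that $G/\Soc(G)$ inherits the two-sided brace structure, i.e. that the process can be iterated; this requires matching the brace-theoretic socle with a genuine ring-theoretic annihilator ideal and invoking the correspondence between brace ideals and ring ideals for two-sided braces proved earlier. Once the ring-theoretic reformulation $\Soc(G)=\{a\mid a*b=0\ \text{for all}\ b\}$ is in place, both the nonvanishing of the socle and the descent to the quotient reduce to standard nilpotency facts for finite radical (hence nilpotent) rings, and the termination of the retraction sequence is then immediate from the strict drop in cardinality.
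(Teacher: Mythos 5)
Your proposal is correct and follows essentially the same route as the paper: identify $(G,+,*)$ with a finite radical ring, observe that $\Soc(G)$ is exactly the set of elements with $a*b=0$ for all $b$, produce a nonzero piece of $G$ annihilating everything, and then iterate via Lemma~\ref{brace-soc} using the fact that $G/\Soc(G)$ is again a two-sided brace. The only cosmetic difference is that the paper exhibits the nonzero part of the socle as a minimal right ideal $I$ with $I*G=\{1\}$, whereas you take the last nonzero power of the nilpotent ring; both rest on the same Nakayama-type fact for finite radical rings.
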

\begin{proof}
From Proposition~\ref{rad} we know that $(G,+,*)$ is a a radical
ring, where $*$ is the operation on $G$ defined by $a*b=ab-a-b$, for
$a,b\in G$. Since $G$ is finite and non-trivial, it has  a minimal
right ideal,  say  $I$.  Because $G$ also is a radical ring, $I*G=\{
1\}$ (recall that $1$ is the zero element of the ring $(G,+,*)$).
Thus if $a\in I$ and $b\in G$, then
$$\lambda_a(b)=ab-a=a*b+b=b.$$
Therefore $I\subseteq\Soc(G)$, and the first part of the result follows.

By Lemma~\ref{brace-soc}, $\Ret (G,r) =(G/\Soc (G), r')$ is the solution of the Yang-Baxter
equation associated to the two-sided brace $G/\Soc (G)$. Therefore the second part of the
result follows by induction on the cardinality of $G$.
\end{proof}

In order to prove the main result of this section we need the
following two lemmas.

\begin{lemma} \label{reduction}
Let $(X,r),(Y,s)$ be two non-degenerate involutive set-theoretic
solutions of the Yang-Baxter equation. Let $f:X\longrightarrow Y$ be
an onto map that is a homomorphism from $(X,r)$ onto $(Y,s)$. Then
there exists an onto map $\widetilde{f}:\widetilde{X}\longrightarrow
\widetilde{Y}$ which is a homomorphism from
$(\widetilde{X},\widetilde{r})=\Ret(X,r)$ onto
$(\widetilde{Y},\widetilde{s})=\Ret(Y,s)$ such that the diagram
\[ \xymatrix{
X \ar@{->}^(0.5){\pi_{X}}[r] \ar_{f}[d] &
\widetilde{X} \ar^{\tilde{f}}[d]\\
Y \ar@{->}^(0.5){\pi_{Y}}[r] & \widetilde{Y} }\] is commutative,
where $\pi_{X},\pi_{Y}$ are the natural maps from $(X,r)$ to
$\Ret(X,r)$ and from $(Y,s)$ to $\Ret(Y,s)$, respectively. Moreover,
if $(X,r)$ is a multipermutation solution then $(Y,s)$ is a
multipermutation solution.
\end{lemma}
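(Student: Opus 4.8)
The plan is to define $\widetilde{f}$ in the only way the commutativity of the diagram permits and then check it is well defined. Recall that the retract relation $\sim$ on $X$ identifies $x\sim x'$ exactly when $\sigma_x=\sigma_{x'}$, and similarly for $Y$ with the maps $\sigma'_y$. So I would set
\[
\widetilde{f}(\pi_X(x))=\pi_Y(f(x))
\]
for all $x\in X$, which forces the diagram to commute by construction. The first and main task is to verify that this is independent of the chosen representative, i.e. that $x\sim x'$ in $X$ implies $f(x)\sim f(x')$ in $Y$. This is precisely the step I expect to be the crux.

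\textbf{Well-definedness is the main obstacle.} Here I would use that $f$ is a homomorphism of solutions. Writing $r(x,z)=(\sigma_x(z),\gamma_z(x))$ and $s(y,w)=(\sigma'_y(w),\gamma'_w(y))$, the homomorphism condition $f(\sigma_x(z))=\sigma'_{f(x)}(f(z))$ holds for all $x,z\in X$. Now suppose $\sigma_x=\sigma_{x'}$. Given any $w\in Y$, surjectivity of $f$ lets me write $w=f(z)$ for some $z\in X$, and then
\[
\sigma'_{f(x)}(w)=\sigma'_{f(x)}(f(z))=f(\sigma_x(z))=f(\sigma_{x'}(z))=\sigma'_{f(x')}(f(z))=\sigma'_{f(x')}(w).
\]
Since $w$ was arbitrary and $f$ is onto, $\sigma'_{f(x)}=\sigma'_{f(x')}$, hence $f(x)\sim f(x')$. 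This is exactly where onto-ness of $f$ is used in an essential way, and it is the step that makes everything hinge on the hypothesis that $f$ is surjective.

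\textbf{$\widetilde{f}$ is an onto homomorphism of the retractions.} Surjectivity of $\widetilde{f}$ is immediate: $\pi_Y$ and $f$ are both onto, so their composite $\pi_Y\circ f=\widetilde{f}\circ\pi_X$ is onto, forcing $\widetilde{f}$ onto. To see $\widetilde{f}$ is a homomorphism from $(\widetilde{X},\widetilde{r})$ to $(\widetilde{Y},\widetilde{s})$, I would unwind the induced solutions: $\widetilde{r}([x],[z])=([\sigma_x(z)],[\gamma_z(x)])$ and likewise for $\widetilde{s}$. Applying $\widetilde{f}$ componentwise and using $\widetilde{f}([x])=[f(x)]$ together with the homomorphism identities $f(\sigma_x(z))=\sigma'_{f(x)}(f(z))$ and $f(\gamma_z(x))=\gamma'_{f(z)}(f(x))$ (the latter following from the former because $X$ is finite, or directly from compatibility of $f$ with $r$ and $s$), the required relation $\widetilde{s}(\widetilde{f}[x],\widetilde{f}[z])=(\widetilde{f}[\sigma_x(z)],\widetilde{f}[\gamma_z(x)])$ drops out. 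This is a routine componentwise check once well-definedness is in hand.

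\textbf{The multipermutation conclusion.} For the final sentence I would argue by induction on $\mpl(X,r)$, applying the first part of the lemma repeatedly. If $\mpl(X,r)=m$, then $\Ret^m(X,r)$ is a one-point solution. Iterating the construction, each $\Ret^k$ carries an onto homomorphism $\Ret^k(X,r)\to\Ret^k(Y,s)$, so $\Ret^m(Y,s)$ is the image of a one-point set under an onto map, hence is itself a singleton. Therefore $\mpl(Y,s)\le m$, so in particular $(Y,s)$ is a multipermutation solution. The only subtlety to flag is that the onto homomorphisms at each level exist precisely because the lemma's first part can be reapplied with $\widetilde{f}$ in place of $f$, which is legitimate since $\widetilde{f}$ is again an onto homomorphism between two non-degenerate involutive solutions.
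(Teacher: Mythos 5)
Your proposal is correct and follows essentially the same route as the paper: define $\widetilde{f}(\pi_X(x))=\pi_Y(f(x))$, verify well-definedness using the homomorphism property together with surjectivity of $f$, and conclude the multipermutation statement by induction on $\mpl(X,r)$. (One tiny note: $f(\gamma_z(x))=\gamma'_{f(z)}(f(x))$ is part of the definition of a homomorphism of solutions used in the paper, so no finiteness of $X$ is needed there.)
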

\begin{proof}
Let $r(x,y)=(\sigma_x(y),\gamma_y(x))$ and
$s(z,t)=(\sigma'_z(t),\gamma'_t(z))$ for $x,y\in X$ and $z,t\in Y$.
We  define $\widetilde{f}\colon\widetilde{X}\longrightarrow
\widetilde{Y}$ by $\widetilde{f}(\pi_X(x))=\pi_Y(f(x))$ for $x\in
X$. Let $x,y\in X$ be such that $\pi_X(x)=\pi_X(y)$. Then
$\sigma_x=\sigma_y$ and since $f$ is a homomorphism, we have
$$\sigma'_{f(x)}(f(u))=\sigma'_{f(y)}(f(u)),$$ for all $u\in X$.
Since $f$ is onto, we get that $\sigma'_{f(x)}=\sigma'_{f(y)}$ and
thus $\pi_Y(f(x))=\pi_Y(f(y))$. Therefore $\widetilde{f}$ is
well-defined and clearly it is onto and the diagram in the statement
of the lemma commutes.
 It is easy to check  that $\widetilde{f}$ is a homomorphism from $\Ret(X,r)$ to $\Ret(Y,s)$.
The last assertion follows by induction on the multipermutation
level of $(X,r)$.
\end{proof}

\begin{lemma} \label{reduction2}
Let $(X,r)$ be a non-degenerate involutive set-theoretic solution
of the Yang-Baxter equation. Assume that $Y$ is an
$\mathcal{G}(X,r)$-invariant subset of $X$. If $(X,r)$ is a
multipermutation solution then $(Y,r_{|Y^{2}})$ also is a
multipermutation solution.
\end{lemma}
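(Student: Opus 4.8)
The plan is to argue by induction on the multipermutation level $m=\mpl(X,r)$. Before starting the induction I would first check that $(Y,r_{|Y^2})$ really is a non-degenerate involutive solution, i.e. that $Y^2$ is invariant under $r$. Writing $r(x,y)=(\sigma_x(y),\gamma_y(x))$, the $\mathcal{G}(X,r)$-invariance of $Y$ says that $\sigma_x(Y)=Y$ for every $x\in X$ (invariance under a generator and its inverse forces equality), so $\sigma_y(y')\in Y$ for $y,y'\in Y$. For the second coordinate I would use the identity $\gamma_y(x)=\sigma_{\sigma_x(y)}^{-1}(x)$, which holds in any non-degenerate involutive solution as a consequence of $r^2=\id$: for $y,y'\in Y$ it gives $\gamma_{y'}(y)=\sigma_{\sigma_y(y')}^{-1}(y)\in Y$, since $\sigma_y(y')\in Y$ and $\sigma_{\sigma_y(y')}^{-1}\in\mathcal{G}(X,r)$. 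Thus $r$ restricts to $Y^2$, and conditions (i)--(iii) of Proposition~\ref{solution} are inherited, so $(Y,r_{|Y^2})$ is again a solution. The base case $m=0$ is trivial, as then $|X|=1$ and $Y=X$.

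For the inductive step, set $(\widetilde X,\widetilde r)=\Ret(X,r)$, so that $\mpl(\widetilde X,\widetilde r)=m-1$, and let $\pi_X\colon X\to\widetilde X$ be the natural map. I would consider the image $\overline Y=\pi_X(Y)\subseteq\widetilde X$ and claim it is $\mathcal{G}(\widetilde X,\widetilde r)$-invariant. Indeed $\mathcal{G}(\widetilde X,\widetilde r)$ is generated by the maps $\widetilde\sigma_{[x]}$ with $\widetilde\sigma_{[x]}([z])=[\sigma_x(z)]$, and since $\sigma_x(Y)=Y$ we get $\widetilde\sigma_{[x]}(\overline Y)=\{[\sigma_x(y)]\mid y\in Y\}=\overline Y$; hence $\overline Y$ is invariant under every generator and its inverse. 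By the induction hypothesis applied to $(\widetilde X,\widetilde r)$ and its invariant subset $\overline Y$, the solution $(\overline Y,\widetilde r_{|\overline Y^2})$ is a multipermutation solution.

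It remains to transfer this back to $Y$. Here I would compare the two retract relations: on $Y$ the relation $\sim_Y$ identifies $y,y'$ when $\sigma_y$ and $\sigma_{y'}$ agree on $Y$, while $\pi_X$ identifies them only when $\sigma_y=\sigma_{y'}$ on all of $X$; thus the relation induced by $\pi_X$ on $Y$ refines $\sim_Y$. Consequently the assignment $[y]\mapsto\langle y\rangle$ (from $\sim_X$-classes to $\sim_Y$-classes, $y\in Y$) is a well-defined onto map $q\colon \overline Y\to\Ret(Y,r_{|Y^2})$, and a direct check using $\widetilde r([y],[y'])=([\sigma_y(y')],[\gamma_{y'}(y)])$ shows it intertwines the two solutions, i.e. $q$ is an onto homomorphism of solutions. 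Since $(\overline Y,\widetilde r_{|\overline Y^2})$ is a multipermutation solution, Lemma~\ref{reduction} yields that $\Ret(Y,r_{|Y^2})$ is a multipermutation solution, and therefore so is $(Y,r_{|Y^2})$.

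The step I expect to be the main obstacle is precisely this last transfer. The difficulty is that $\Ret(Y,r_{|Y^2})$ is not literally a subsolution of $\Ret(X,r)$: the retract relation computed inside $Y$ is in general strictly coarser than the restriction of the one computed inside $X$, so $Y$ does not embed compatibly into $\widetilde X$ after retraction. Identifying $\overline Y=\pi_X(Y)$ as the correct $\mathcal{G}(\widetilde X,\widetilde r)$-invariant subset and realizing $\Ret(Y,r_{|Y^2})$ as an image of it via $q$ is what makes Lemma~\ref{reduction} applicable and closes the induction.
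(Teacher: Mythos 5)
Your proposal is correct and follows essentially the same route as the paper: induction on $\mpl(X,r)$, passing to the image $\pi_X(Y)$ in $\Ret(X,r)$ (which the paper calls $Z$), applying the induction hypothesis there, and then transferring back to $\Ret(Y,r_{|Y^2})$ via the onto homomorphism $[y]\mapsto\pi_Y(y)$ together with Lemma~\ref{reduction}. The only difference is that you spell out the verifications (invariance of the image, well-definedness of the comparison map) that the paper leaves as easy checks.
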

\begin{proof}
 Since $Y$ is a $\mathcal{G}(X,r)$-invariant subset of $X$, it is
clear that $(Y,r_{|Y^2})$ is a non-degenerate involutive
set-theoretic solution of the Yang-Baxter equation. We shall see
that it is a multipermutation solution by induction on the
multipermutation level $\mpl(X,r)$ of $(X,r)$. If $\mpl(X,r)\leq 1$
then the assertion is clear. Assume that $m=\mpl(X,r)>1$. Let
$\pi_{X}: X\longrightarrow \widetilde{X}$ be the natural map from
$(X,r)$ to $\Ret(X,r)=(\widetilde{X}, \widetilde{r})$.  Let
$Z=\pi_{X}(Y)$ and $s=\widetilde{r}_{|Z^{2}}$.  Since $Y$ is a
$\mathcal{G}(X,r)$-invariant subset of $X$,  $Z$ is a
$\mathcal{G}(\widetilde{X},\widetilde{r})$-invariant subset of $X$.
Then $(Z,s)$  also is a non-degenerate involutive set-theoretic
solution of the Yang-Baxter equation. Let $\pi_{Y}$ be the natural
homomorphism from $(Y,r_{|Y^{2}})$ to
$\Ret(Y,r_{|Y^{2}})=(\widetilde{Y},t)$. We define a map
$f:Z\longrightarrow \widetilde{Y}$ by $f(\pi_{X}(y))=\pi_{Y}(y)$ for
$y\in Y$. It is easy to check that $f$ is well-defined and it is an
onto homomorphism from $(Z,s)$ to $(\widetilde{Y},t)$. Since
$(\widetilde{X},\widetilde{r})$ is a multipermutation solution of
level $m-1$, by induction  hypothesis $(Z,s)$ is a multipermutation
solution and then by Lemma~\ref{reduction} it follows that
$(\widetilde{Y},t)$ is a multipermutation solution. Therefore
$(Y,r_{|Y^{2}})$ is a multipermutation solution.
\end{proof}

Now we are ready to prove the following result that partially
generalizes \cite[Corollary 2.9]{CJO} and extends Theorem 2.5 in
\cite{CJO} to the class of all finite non-degenerate involutive
set-theoretic solutions. The method of proof is completely
different.

\begin{theorem}\label{abelian}
Let $(X,s)$ be a finite non-degenerate involutive set-theoretic
solution of the Yang-Baxter equation. If its associated IYB group
$\mathcal{G}(X,s)$ is abelian, then $(X,s)$ is a multipermutation
solution.
\end{theorem}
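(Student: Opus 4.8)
The plan is to move the whole question into the language of left braces, where retractability is governed by the socle (Lemma~\ref{brace-soc}), and to observe that an abelian multiplicative group forces the relevant finite brace to be two-sided, so that Proposition~\ref{radical} applies directly.

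First I would invoke Theorem~\ref{YBsol} to realize $(X,s)$ concretely. It produces a left brace $H=\{(a,\phi(a))\mid a\in\mathbb{Z}^X\}\subseteq \mathbb{Z}^X\rtimes\Sym_X$, with $\phi(x)=\sigma_x$, whose multiplicative group is $G(X,s)$, together with a subset $Y=\{(x,\sigma_x)\mid x\in X\}$ such that $(Y,r')\cong(X,s)$, where $(H,r)$ is the solution associated to $H$ as in Lemma~\ref{varphi}. By Remark~\ref{braceIYB} the map $\pi(a,\phi(a))=\phi(a)$ is a brace homomorphism with $\pi(H)=\mathcal{G}(X,s)$ whose kernel is an ideal, so $\mathcal{G}(X,s)$ is isomorphic to the multiplicative group of the finite left brace $H/\Ker(\pi)$; by hypothesis this group is abelian.

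The key step is to pass to $H/\Soc(H)$. A direct computation gives $\lambda_{(a,\phi(a))}(b,\phi(b))=(\phi(a)(b),\phi(\phi(a)(b)))$, so every $(a,\id_X)\in\Ker(\pi)$ satisfies $\lambda_{(a,\id_X)}=\id$; hence $\Ker(\pi)\subseteq\Soc(H)$ and $H/\Soc(H)$ is a finite left brace whose multiplicative group, being a quotient of $\mathcal{G}(X,s)$, is abelian. Now I would prove the small but crucial fact that a left brace $G$ with abelian multiplicative group is two-sided: since $\cdot$ is commutative, the opposite multiplication coincides with $\cdot$, so $G$ equals its opposite brace $G^{op}$, which is a right brace. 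Concretely, $(a+b)c+c=c(a+b)+c=ca+cb=ac+bc$ using commutativity and (\ref{LE}), which is exactly (\ref{RE}). Thus $H/\Soc(H)$ is a finite two-sided brace.

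Finally I would assemble the pieces. Proposition~\ref{radical} shows the solution associated to the finite two-sided brace $H/\Soc(H)$ is a multipermutation solution, and by Lemma~\ref{brace-soc} this solution is precisely $\Ret(H,r)$; hence $(H,r)$ itself is a multipermutation solution. The subset $Y$ is $\mathcal{G}(H,r)$-invariant, because the generators $\lambda_{(a,\phi(a))}$ send $(x,\sigma_x)$ to $(\phi(a)(x),\sigma_{\phi(a)(x)})$ with $\phi(a)(x)\in X$, as $\phi(a)\in\Sym_X$ permutes the basis $X$ of $\mathbb{Z}^X$. Therefore Lemma~\ref{reduction2} yields that $(Y,r')\cong(X,s)$ is a multipermutation solution, as desired. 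The only genuinely non-formal points are the two-sidedness observation and the identification $\Ret(H,r)=(H/\Soc(H),r')$; once these are in place, the invariance of $Y$ and the finiteness of $H/\Soc(H)$ are routine.
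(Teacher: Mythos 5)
Your proposal is correct and follows essentially the same route as the paper: translate to the brace of $I$-type attached to $(X,s)$, observe that abelianness of $\mathcal{G}(X,s)$ makes the quotient by the socle a finite two-sided brace, apply Proposition~\ref{radical} together with Lemma~\ref{brace-soc}, and descend to $X$ via Lemma~\ref{reduction2}. The only difference is one of packaging: you restrict the brace solution $(H,r)$ to the copy $Y$ of $X$ furnished by Theorem~\ref{YBsol} and apply Lemma~\ref{reduction2} inside the (infinite) solution $(H,r)$, whereas the paper restricts the finite solution $(\bar G,\bar r)$ to a copy $X_1$ of $\Ret(X,s)$, at the cost of an extra but routine verification that $(X_1,s_1)$ is well defined and isomorphic to $\Ret(X,s)$.
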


\begin{proof}
Let $G(X,s)=\{(a,\phi (a))\mid a\in \mathbb{Z}^X\}$ be the group of
$I$-type associated to a finite solution $(X,s)$. We know that this
is a left brace as explained in the comment before
Proposition~\ref{Itype} and thus, for $(a,\phi (a)),\, (b,\phi
(b))\in G(X,s)$, we have
\begin{equation} \label{needed-equation}
\lambda_{(a,\phi (a))}(b,\phi (b))=(a,\phi (a))(b,\phi (b))-(a,\phi (a))=
(\phi (a)(b),\phi (\phi (a)(b))). \end{equation} Thus
$\lambda_{(a,\phi(a))}=\id_{G(X,s)}$ if and only if
$\phi(a)=\id$.

The socle of the brace $G(X,s)$ is
\begin{eqnarray*}
\Soc (G(X,s))&=&\{ (a,\phi(a))\in G(X,s)\mid \lambda_{(a,\phi(a))}=\id\}\\
 &=&\{ (a,\phi(a))\in G\mid \phi(a)=\id\}.
 \end{eqnarray*}
Note that the multiplicative group of the left brace $\bar
G=G(X,s)/\Soc(G(X,s))$ is isomorphic to the IYB group
$\mathcal{G}(X,s)$ associated to the solution $(X,s)$. We denote by
$\bar a$ the element $\bar a=\Soc(G(X,s))(a,\phi(a))\in \bar G$. Let
$X_1=\{ \bar x\mid x\in X\}\subseteq \bar G$. Then $X_1$ generates
the additive group of the brace $\bar G$. Furthermore, for $x,y\in
X$, we get that $\bar x=\bar y$ if and only if $\phi(x)=\phi(y)$.
Let $x,z,y,t\in X$ be such that $\bar x=\bar z$ and $\bar y=\bar t$.
We have that $\phi(x)=\phi(z)$. Since $\bar y=\bar t$, this implies
that $(y,\phi(y))-(t,\phi(t))\in \Soc(G(X,s))$. Hence
$\lambda_{(x,\phi(x))}(y-t,\id)=\lambda_{(x,\phi(x))}(y,\phi(y))-\lambda_{(x,\phi(x))}(t,\phi(t))\in\Soc(G(X,s))$.
Therefore
$\overline{\phi(x)(y)}=\overline{\phi(x)(t)}=\overline{\phi(z)(t)}$.
Then $\phi(\phi(x)(y))^{-1}=\phi(\phi(z)(t))^{-1}$. Since $\bar
x=\bar z$, we also obtain that
$\overline{\phi(\phi(x)(y))^{-1}(x)}=\overline{\phi(\phi(z)(t))^{-1}(z)}$.
 Hence the mapping
 $s_1\colon X_1^2\longrightarrow X_1^2$ defined by
$$s_1(\bar x,\bar y)=(\overline{\phi(x)(y)},\overline{\phi(\phi(x)(y))^{-1}(x)}),$$
 for $x,y\in X$, is well defined, and it is easy to see that $(X_1,s_1)$ is a
 non-degenerate involutive set-theoretic solution of the Yang-Baxter equation.
 Let $\Ret (X,s)= (X/\sim , \widetilde{s})$. We denote by $[a]$ the
retraction class of $a\in X$, i.e. $[a]=\{ b\in X \mid \phi (a)=\phi
(b)\}$. It is easy to see that the map $X_{1}\rightarrow X/\sim$
defined by $\bar{a} \mapsto [a]$ gives an isomorphism from    the
solution $(X_1,s_1)$ to the solution  $\Ret (X,s)=(X/\sim ,
\widetilde{s})$.

Since the  multiplicative group of the left brace $\bar{G}$ is
abelian, it is a two-sided brace. By Proposition~\ref{radical}, the
solution $(\bar{G}, \bar{r})$ associated to the finite two-sided
brace $\bar{G}$ is a multipermutation solution. Note that by the
definition of $s_{1}$ and by (\ref{needed-equation}) one obtains
that  $s_{1}$ is the restriction of $\bar{r}$ to $X_{1}^{2}$. Since
$(\bar{G},\bar{r})$ is a  multipermutation solution,
Lemma~\ref{reduction2} implies that $(X_{1},s_{1})$ also is a
multipermutation solution. Therefore $(X,s)$ is a multipemutation
solution.
\end{proof}

Note the first part of the proof is general. The abelian hypothesis
of the associated IYB group $\mathcal{G}(X,s)$ is only used in the
last paragraph in order to deduce that $(\bar{G},\bar{r})$ is a
two-sided brace. Proposition~\ref{radical} then yields it is a
multipermutation solution.

\section{On some open problems} \label{openproblems}

We say that a non-degenerate involutive set-theoretic solution
$(X,s)$ of the Yang-Baxter equation is square free if $s(x,x)=(x,x)$
for all $x\in X$. Such solutions have received a lot of attention as
they seem to have better structural properties (see for example
\cite{GC,JO,JObook} and their references). In particular Rump proved
in \cite{rump1} that such a finite non-trivial square-free solution
is  decomposable in the following sense. The set $X$ is the disjoint
union of non-empty $\mathcal{G}(X,s)$-invariant subsets $X_{1}$ and
$X_{2}$. Gateva-Ivanova in \cite{Gat} used Rump's result to prove
that such solutions come from the so called semigroups of skew
polynomial type, hence confirming her earlier conjecture stated in
\cite{Gat-Mis}.

The following conjecture was formulated by Gateva-Ivanova in
\cite{Gat}.

\begin{conjecture}[Gateva-Ivanova] \label{Conjecture-Gateva}
Every set-theoretic non-degenerate involutive square-free solution
$(X,s)$ of the Yang-Baxter equation of cardinality $n\geq 2$ is a
multipermutation solution of level $m<n$.
\end{conjecture}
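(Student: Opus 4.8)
The plan is to split the conjecture into two stages: first reduce the level bound $m<n$ to the bare assertion that $(X,s)$ is a multipermutation solution, and then prove retractability by induction on $|X|=n$.

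The reduction of the level bound is essentially free. The key observation is that the retraction of a square-free solution is again square-free: since $\sigma_x(x)=x$, and hence also $\gamma_x(x)=x$ by involutivity, one has $\widetilde{s}([x],[x])=([x],[x])$, so every $\Ret^k(X,s)$ is square-free. Suppose now $(X,s)$ is known to be retractable, put $m=\mpl(X,s)$ and $a_k=|\Ret^k(X,s)|$. For $k<m$ the minimality of $m$ gives $a_k\geq 2$; and if $a_{k+1}=a_k$ for such a $k$, then the (surjective) retraction map of $\Ret^k(X,s)$ is a bijection, so on $\Ret^k(X,s)$ the map $x\mapsto\sigma_x$ is injective and every further retraction preserves the cardinality $a_k\geq 2$, contradicting $a_m=1$. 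Hence $n=a_0>a_1>\cdots>a_m=1$ is strictly decreasing, whence $m\leq n-1<n$. It therefore suffices to prove that a finite square-free solution with $n\geq 2$ is a multipermutation solution.

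For this I would induct on $n$, reducing everything to the following Key Lemma: a finite square-free solution with $n\geq 2$ has $|\Ret(X,s)|<n$, i.e. the map $x\mapsto\sigma_x$ is not injective. Granting it, $\Ret(X,s)$ is a square-free solution of strictly smaller cardinality, hence a multipermutation solution by the induction hypothesis, so $(X,s)$ is one as well; the base case $n=1$ is trivial.

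Everything thus hinges on the Key Lemma, which I expect to be the main obstacle. The natural tool is Rump's decomposition recalled in Section~\ref{openproblems}: $X=U\sqcup V$ with $U,V$ nonempty and $\mathcal{G}(X,s)$-invariant. Each $\sigma_x$ then restricts to permutations of $U$ and of $V$, and $\sigma_x=\sigma_y$ means exactly that both restrictions agree; applying the induction hypothesis to the smaller square-free subsolutions $(U,s_{|U^2})$ and $(V,s_{|V^2})$ produces, within each component, distinct elements whose $\sigma$-restrictions coincide. The delicate point---the heart of the proof---is to promote such partial collisions to a genuine equality of the full permutations $\sigma_x$ on all of $X$, which forces one to control the cross-component action of a $U$-element on $V$ and vice versa through the Yang-Baxter relation~(iii) of Proposition~\ref{solution} together with square-freeness. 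Here the brace dictionary supplies the right bookkeeping: passing, as in the proof of Theorem~\ref{abelian}, to $H=G(X,s)$ and the finite left brace $\overline{G}=H/\Soc(H)$ whose multiplicative group is $\mathcal{G}(X,s)$, the Key Lemma becomes the assertion that the natural map $X\to\overline{G}$, $x\mapsto\overline{(x,\sigma_x)}$, is not injective. This is the exact analogue of the two-sided (radical-ring) situation settled in Proposition~\ref{radical}, but now without the commutativity that drove the proof of Theorem~\ref{abelian}; extracting the required nontriviality from the interplay of Rump's decomposition with the square-free relations is where I expect the real difficulty to lie.
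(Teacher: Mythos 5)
There is a genuine gap, and it is exactly where you flag it: your ``Key Lemma'' (that for a finite square-free non-degenerate involutive solution with $n\geq 2$ the map $x\mapsto\sigma_x$ is not injective, so that $|\Ret(X,s)|<n$) is not a lemma you prove but the entire content of the conjecture. Your preliminary reductions are fine: the retraction of a square-free solution is square-free, and once retractability is known the strict decrease $n=a_0>a_1>\cdots>a_m=1$ does give $m<n$; likewise the induction on $n$ correctly reduces everything to the non-injectivity statement. But the proposed attack on that statement does not close. Rump's decomposition $X=U\sqcup V$ into $\mathcal{G}(X,s)$-invariant parts, combined with the induction hypothesis applied to $(U,s_{|U^2})$ and $(V,s_{|V^2})$, only produces $u\neq u'$ in $U$ with $\sigma_u|_U=\sigma_{u'}|_U$ (and similarly in $V$); nothing in the Yang--Baxter relation of Proposition~\ref{solution} together with square-freeness forces $\sigma_u|_V=\sigma_{u'}|_V$, and the ``promotion'' of a partial collision to a full one is precisely the step you leave open. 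The brace reformulation via $\overline{G}=G(X,s)/\Soc(G(X,s))$ does not rescue this either, because, as the example at the end of Section~\ref{openproblems} shows, the solution $(\overline{G},\bar r)$ associated to $\overline{G}$ need not be square-free even when $(X,s)$ is, so Theorem~\ref{square} cannot be invoked.

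You should also be aware that the paper itself offers no proof of this statement: it is recorded as Conjecture~\ref{Conjecture-Gateva} and is only \emph{confirmed in special cases}, namely for solutions whose IYB group is abelian (Theorem~\ref{abelian}, together with the results of Gateva-Ivanova and Cameron) and for the square-free solutions associated to a finite left brace (Theorem~\ref{square}). So there is no ``paper proof'' to match your attempt against; a blind argument for the full conjecture would have to supply an idea that the authors explicitly do not have, and the conjecture was in fact later shown to fail in general. The honest conclusion is that your proposal is a correct reduction plus an unproved (and unprovable, as stated) core claim.
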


Recently, in \cite[Open questions 6.13]{GC}, Cameron and
Gateva-Ivanova stated the following two questions.

\begin{question}
Let $(X,s)$ be a finite multipermutation square-free solution of
the Yang-Baxter equation with $|X|>1$ and $\mpl(X,s)=m$.
\begin{enumerate}
\item Can we find a lower bound for the solvable length of the
group of $I$-type associated to $(X,s)$ in terms of $m$?
\item Are there multipermutation square-free solutions $(X,s)$
of arbitrarily  high multipermutation  level with an  abelian $IYB$
group $\mathcal{G}(X,s)$? If not, what is the largest integer $M$
for which there exist solutions $(X,s)$ with $\mpl(X,s)=M$ and
$\mathcal{G}(X,s)$ abelian?
\end{enumerate}
\end{question}

Rump in \cite{rump4} constructed finite multipermutation
solutions of the Yang-Baxter equation of arbitrary
multipermutation level such that their associated IYB groups are
abelian. But these solutions are not square free.

The following result provides an answer to both questions.  Let $G$
be a left brace. Let $(G,r)$ be its associated non-degenerate
involutive solution of the Yang-Baxter equation, that is,
$r(a,b)=(\lambda_a(b),\lambda^{-1}_{\lambda_a(b)}(a))$ for $a,b\in
G$. Note that $(G,r)$ is square free if and only if $\lambda_a(a)=a$
for all $a\in G$. We have the following result.

\begin{theorem} \label{elementary-abelian} Let $n$ be a positive integer.
Then there exists a finite multipermutation square-free solution
of the Yang-Baxter equation of multipermutation level $n$  such
that its associated IYB group is an elementary abelian $2$-group.
\end{theorem}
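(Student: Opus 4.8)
The plan is to construct an explicit family of left braces, parametrized by $n$, whose additive groups are elementary abelian $2$-groups (so that the associated IYB group $\mathcal{G}(X,s)$ will automatically be an elementary abelian $2$-group once we use Theorem~\ref{bracesol}), and whose associated solution has multipermutation level exactly $n$ and is square-free. The guiding principle is the machinery already developed: by Lemma~\ref{brace-soc} the retraction of the solution $(G,r)$ associated to a left brace $G$ is the solution associated to $G/\Soc(G)$, so the multipermutation level of $(G,r)$ equals the length of the iterated socle series of $G$. Therefore I would aim to build a brace $G$ whose socle series $\Soc(G)\subsetneq \Soc_2(G)\subsetneq\cdots$ has exactly $n$ proper steps, where $\Soc_{k}$ is defined by $\Soc_{k}(G)/\Soc_{k-1}(G)=\Soc(G/\Soc_{k-1}(G))$.

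Concretely, the natural candidate is to take the additive group $(G,+)$ to be a vector space over the field of two elements, say with a basis $e_0,e_1,\dots,e_{n}$ or some triangular indexing, and to define the $\lambda$-action so that $\lambda_a$ acts on the basis by a "shift" that lowers a grading by one. Then each step of the socle filtration strips off one layer of the grading, giving level exactly $n$. The multiplication is then recovered from $ab=a+\lambda_a(b)$; I would verify directly that this satisfies the left-brace axiom~(\ref{LE}), equivalently that $\lambda\colon G\to\Aut(G,+)$ is a group homomorphism with $a\lambda_a^{-1}(b)=b\lambda_b^{-1}(a)$, as in Proposition~\ref{lambda-main}. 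Because the additive group has exponent $2$, one gets $a=-a$ throughout, which simplifies the cocycle identity enormously and is exactly what forces $\mathcal{G}(X,s)$ to be an elementary abelian $2$-group: the multiplicative group embeds in $(G,+)\rtimes\Aut(G,+)$ and its image in $\Sym_G$ lies in the group generated by the involutions $\lambda_{e_i}$, which must be checked to commute and square to the identity.

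The three verifications I would carry out in order are: first, that the proposed $\lambda$ makes $(G,+,\cdot)$ a genuine left brace (this is the routine cocycle check); second, that $\Soc(G)$ is precisely the bottom layer of the grading and that iterating gives $\mpl(G,r)=n$, using Lemma~\ref{brace-soc} and induction on $n$; third, that the associated solution is square-free, i.e.\ $\lambda_a(a)=a$ for all $a\in G$, for which one reduces by additivity of $\lambda_a$ to checking $\lambda_{e_i}(e_j)=e_j$ on basis vectors in the relevant positions, and more delicately $\lambda_a(a)=a$ when $a$ is a sum of basis elements. Finally I would invoke Theorem~\ref{bracesol} to pass from the "retracted" solution $(G,r)$ on the brace to an honest square-free solution $(X,s)$ on a finite set with $\Ret(X,s)\cong(G,r)$ and $\mathcal{G}(X,s)$ equal to the multiplicative group of $G$; one must check that this inflation step raises the level by exactly one and preserves both the square-free property and the elementary abelian structure of the IYB group, adjusting the grading so that the final level is $n$.

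\textbf{The main obstacle} I anticipate is the simultaneous satisfaction of the square-free condition $\lambda_a(a)=a$ and the requirement that the socle filtration have length exactly $n$ (not less). A "shift" action that drops the grading will generically fail to fix the diagonal $\lambda_a(a)=a$ unless the action is set up with some care about how basis vectors in the same or adjacent layers interact; getting a homomorphism $\lambda$ into $\Aut(G,+)$ that is simultaneously square-free and has a controlled, non-collapsing socle series is the delicate design problem. Rump's non-square-free examples in \cite{rump4} already achieve abelian IYB groups at arbitrary level, so the novelty and the difficulty are entirely in engineering the action to be square-free while keeping $\mathcal{G}(X,s)$ an elementary abelian $2$-group, and I expect the bulk of the work to be the explicit combinatorial definition of $\lambda_{e_i}$ on the basis together with the inductive socle computation.
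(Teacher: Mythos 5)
Your framework is the right one---working over $\mathbb{F}_2$, using Lemma~\ref{brace-soc} to identify the multipermutation level of $(G,r)$ with the length of the iterated socle series of the brace $G$, and reducing square-freeness to the condition $\lambda_a(a)=a$---but the argument has a genuine gap exactly where you flag the ``main obstacle'': the brace is never actually constructed. The whole content of the theorem is the existence of an explicit example, and ``a shift that lowers a grading by one'' is not yet a definition of $\lambda$; as you note yourself, arranging for such an action to be a homomorphism into $\Aut(G,+)$ satisfying the cocycle identity of Proposition~\ref{lambda-main}, to fix the diagonal, and to have a socle filtration of length exactly $n$ is the entire difficulty, and it is left unresolved. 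The paper settles it with a ready-made object: $R=\omega((\mathbb{Z}/2\mathbb{Z})[G_0])$, the augmentation ideal of the group algebra of an elementary abelian $2$-group $G_0$ of order $2^n$, viewed as a radical ring and hence (Proposition~\ref{rad}) a two-sided brace under $\alpha\circ\beta=\alpha\beta+\alpha+\beta$. There $\lambda_\alpha(\beta)=\alpha\beta+\beta$, square-freeness is automatic because $\alpha^2=0$ for every $\alpha\in R$ (Frobenius in characteristic $2$ on a group of exponent $2$), commutativity of $R$ gives $\lambda_\alpha\lambda_\beta=\lambda_{\alpha\circ\beta}=\lambda_\beta\lambda_\alpha$ together with $\alpha\circ\alpha=0$, so the IYB group is elementary abelian, and $\Soc(R/R^m)=R^{m-1}/R^m$ with $R^n\neq 0$ (witnessed by $\prod_i(g_i+1)$) and $R^{n+1}=0$, giving level exactly $n$. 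Without an explicit candidate playing this role, your proposal does not yet prove existence.

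A secondary point: the final inflation via Theorem~\ref{bracesol} is unnecessary and slightly misdirected. The solution $(G,r)$ associated to a finite left brace is already a non-degenerate involutive solution on the finite set $G$ itself, with IYB group $\lambda(G)$; there is no need to realize it as the retraction of a larger solution, and doing so obliges you to shift the grading by one and to re-verify square-freeness and the structure of the IYB group after inflation. Taking $X=R$ with $r(\alpha,\beta)=(\alpha\beta+\beta,\alpha\beta+\alpha)$ finishes the proof directly. Note also that square-freeness together with an exponent-$2$ additive group already forces every element of the multiplicative group to be an involution, since $a\cdot a=a+\lambda_a(a)=a+a=0$; hence the multiplicative group, and therefore the IYB group, is automatically elementary abelian, so the commutativity check you defer comes for free.
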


\begin{proof}
Let $G$ be an elementary abelian $2$-group of order $2^n$. Let
$R=\omega((\mathbb{Z}/2\mathbb{Z})[G])$, the augmentation ideal of
the  group ring $(\mathbb{Z}/2\mathbb{Z})[G]$. Note that if
$G=\langle g_1,\dots, g_n\rangle$, then $\sum_{g\in
G}g=\prod_{i=1}^n(g_i+1)\in R^n$ and $R^{n+1}=\{ 0\}$. Clearly we
have  $R^{m+1}\subseteq R^{m}$ and $R^{m+1}\neq R^{m}$ for all
positive integers $m\leq n$. From Proposition~\ref{rad} we  know
that $(R,+,\circ)$ is a two-sided brace. Furthermore,
$(R/R^m,+,\circ)$ also is a two-sided brace for all positive
integers $m\leq n$. Consider the set-theoretic solutions $r\colon
R\times R\longrightarrow R\times R$ and $r_{n+1-m}\colon R/R^m\times
R/R^m \longrightarrow R/R^m\times R/R^m$ associated to these braces.
Recall that
\begin{eqnarray*}r(\alpha,\beta)&=&(\lambda_{\alpha}(\beta),\lambda^{-1}_{\lambda_{\alpha}(\beta)}(\alpha)),\\
r_{n+1-m}(\overline{\alpha},\overline{\beta})&=&(\lambda_{\overline{\alpha}}(\overline{\beta}),
\lambda^{-1}_{\lambda_{\overline{\alpha}}(\overline{\beta})}(\overline{\alpha})),
\end{eqnarray*}
for all $\alpha,\beta\in R$ and
$\overline{\alpha},\overline{\beta}\in R/R^{m}$.  An easy
calculation yields
\begin{eqnarray*}r(\alpha,\beta)&=&(\alpha\beta+\beta,\alpha\beta+\alpha),\\
r_{n+1-m}(\overline{\alpha},\overline{\beta})&=&(\overline{\alpha}\overline{\beta}+\overline{\beta},
\overline{\alpha}\overline{\beta}+\overline{\alpha}).
\end{eqnarray*}

Note that $\lambda_{\alpha}(\alpha)=\alpha^2+\alpha=\alpha$ for all
$\alpha\in R$. Thus $(R,r)$ and $(R/R^m,r_{n+1-m})$ are square-free
solutions. By Lemma~\ref{lambda},
$\lambda_{\alpha}\lambda_{\beta}=\lambda_{\alpha\circ\beta}$.
Therefore the $IYB$ group $G_r=\langle \lambda_{\alpha}\mid\alpha\in
R\rangle$ associated to $(R,r)$ is abelian and, since
$\alpha\circ\alpha=\alpha^2+\alpha+\alpha=0$ and $\lambda_0=\id_R$,
we have that $G_r$ is an elementary abelian $2$-group.

Note also that $\Soc(R)=\{\alpha\in R\mid \alpha\beta=0$ for all
$\beta\in R\}$ and $\Soc(R/R^m)=\{\overline{\alpha}\in R/R^m\mid
\overline{\alpha}\overline{\beta}=0$ for all $\overline{\beta}\in
R/R^m\}$. Therefore $\Soc(R)=R^n$ and $\Soc(R/R^m)=R^{m-1}/R^m$
for all $1<m\leq n$. Thus, by Lemma~\ref{brace-soc}, we have that $\Ret(R,r)=(R/R^n,r_1)$
and, since $(R/R^m)/(R^{m-1}/R^{m})\cong R/R^{m-1}$,
$\Ret(R/R^m,r_{n+1-m})=(R/R^{m-1},r_{n+2-m})$. Hence
$\Ret^m(R,r)=(R/R^{n+1-m},r_m)$ for all positive integers $m\leq
n$. Therefore $(R,r)$ is a multipermutation solution of
multipermutation level $n$, and the result follows.
\end{proof}

Assertion (i) in the following result shows that examples of
solutions as in the proof of Theorem~\ref{elementary-abelian} cannot
come from a left brace $G$ whose order is odd. Furthermore, the
result gives  a positive answer to
Conjecture~\ref{Conjecture-Gateva} in the case of solutions of the
Yang-Baxter equation associated to finite left braces.

\begin{theorem}\label{square}
Let $G$ be a left brace such that $\lambda_a(a)=a$ for all $a\in G$.
Then $G$ is a two-sided brace, the multiplicative group of $G$ is
nilpotent of class at most $2$ and $\Soc(G)\subseteq \Z(G)$,  the
center of $G$.
\begin{itemize}
\item[(i)] The torsion-subgroup $T(G)$ of the multiplicative group of $G$
is an ideal of $G$. If $G_e$ is the unique maximal $2$-subgroup of
$T(G)$ and $G_{o}$ is the maximal subgroup of $T(G)$ consisting of
the elements of odd order, then both $G_{e}$ and $G_{o}$ are
ideals of the brace $G$ and $\Soc(T(G))=\Soc(G_{e})\Z(G_{o})$.
\item[(ii)] Furthermore, if $G_e$ is finite and the multiplicative group of $G/T(G)$ is
finitely generated, then  the
solution of the Yang-Baxter equation  $(G,r)$ associated to the
brace $G$ is a multipermutation solution.
\end{itemize}
\end{theorem}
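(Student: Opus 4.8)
The plan is to treat the three unlabelled assertions first and then parts (i) and (ii), throughout abbreviating $a*b=ab-a-b=\lambda_a(b)-b$. Everything hinges on one observation, which I expect to be the main obstacle: converting the \emph{diagonal} hypothesis $\lambda_a(a)=a$ into the \emph{bilinear} right distributive law. The trick is to apply square-freeness to products rather than to sums. For $x,y\in G$ apply $\lambda_z(z)=z$ to $z=xy$; since $\lambda_{xy}=\lambda_x\lambda_y$ by Lemma~\ref{lambda} and $xy=x+\lambda_x(y)$, this gives $\lambda_x\lambda_y(xy)=xy$, and applying $\lambda_x^{-1}$ together with $\lambda_x^{-1}(x)=x$ yields the key identity $\lambda_y(xy)=x+y$, that is $xy=\lambda_y^{-1}(x)+y$. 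Hence $xy-y=\lambda_y^{-1}(x)$, and the right-hand side is additive in $x$ because $\lambda_y^{-1}\in\Aut(G,+)$; this is exactly~(\ref{RE}). So $G$ is two-sided, and no $2$-torsion difficulties arise along the way.

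Once $G$ is two-sided, Proposition~\ref{rad} makes $(G,+,*)$ a radical ring. Square-freeness says $a*a=0$, so by polarization $a*b=-b*a$, and $a\cdot(-a)=a+\lambda_a(-a)=a-\lambda_a(a)=0$ shows $a^{-1}=-a$ in the multiplicative group. From $a*a=0$ one gets $a*b*a=(a*b)*a=-(b*a)*a=-b*(a*a)=0$ and $(a*b)*b=a*(b*b)=0$; a short computation then gives the group commutator $[a,b]=2(a*b)$. Combining associativity with anti-commutativity yields $2(a*b*c)=0$, whence $[[a,b],c]=4(a*b*c)=0$, so the multiplicative group is nilpotent of class at most $2$. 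For $\Soc(G)\subseteq\Z(G)$: if $\lambda_a=\id$ then $a*x=0$ for all $x$, so $x*a=-a*x=0$, giving $\lambda_x(a)=a$ and $ax=a+x=xa$.

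For (i) the starting point is that $a*a=0$ forces the $n$-th multiplicative power of $a$ to equal $na$, so multiplicative and additive orders coincide; hence $T(G)$ is precisely the additive torsion subgroup, $G_e$ its $2$-primary component and $G_o$ its odd component. Since $uv=u+\lambda_u(v)$ and $\lambda_u$ preserves additive orders, these are multiplicative subgroups, and being additive subgroups invariant under every $\lambda_a$ they are left ideals of $(G,+,*)$, hence, by anti-commutativity, two-sided ring ideals and therefore brace ideals. As $G_e*G_o\subseteq G_e\cap G_o=\{0\}$ we have $T(G)=G_e\oplus G_o$ as braces, so $\Soc(T(G))=\Soc(G_e)\oplus\Soc(G_o)$. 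Finally $a\in\Z(G_o)$ means $ax=xa$, i.e.\ $2(a*x)=0$ for all $x\in G_o$; as $G_o$ has no $2$-torsion this forces $a*x=0$, so $\Z(G_o)=\Soc(G_o)$ and $\Soc(T(G))=\Soc(G_e)\,\Z(G_o)$.

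For (ii) I would use that, by Lemma~\ref{brace-soc}, $(G,r)$ is a multipermutation solution exactly when the socle series reaches $G$ after finitely many steps, and since $\Soc(G)=\ann(G,*)$ this holds if and only if the radical ring $(G,*)$ is nilpotent. Now $2(a*b*c)=0$ shows that on the torsion-free brace $G/T(G)$ and on the odd brace $G_o$ all triple products vanish, so both are nilpotent; $G_e$ is a finite radical ring and hence nilpotent, so $T(G)=G_e\oplus G_o$ is nilpotent. From $(G/T(G))^{*N}=0$ we get $G^{*N}\subseteq T(G)$, and $T(G)^{*k}=0$ then gives $G^{*Nk}=0$, so $(G,*)$ is nilpotent and $(G,r)$ is a multipermutation solution. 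The hypotheses that $G_e$ be finite and that $G/T(G)$ be finitely generated serve exactly to make these two pieces nilpotent rings; the genuinely new ingredient throughout is the product form $\lambda_y(xy)=x+y$ of square-freeness established in the first paragraph, which is where I expect the real work to lie.
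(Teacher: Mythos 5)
Your proposal is correct, and in several places it takes a genuinely different and shorter route than the paper. For two-sidedness, the paper first proves $a^n=na$ by induction, deduces $a^{-1}=-a$, and then verifies $(b+c)a+a=ba+ca$ by a chain of inverse manipulations; your identity $\lambda_y(xy)=x+y$, i.e.\ $\rho_y=\lambda_y^{-1}$, gets right-distributivity in one stroke from the additivity of $\lambda_y^{-1}$ (and I checked the derivation: it is valid). For nilpotency of class $2$ and for $\Soc(G_o)=\Z(G_o)$ the two arguments are essentially the same computation packaged differently — the paper works with $2(a+b)=2ab$ when $ab=ba$, you with the commutator formula $[a,b]=2(a*b)$, both resting on anticommutativity plus associativity giving $2(a*b*c)=0$. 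In (i) your ring-theoretic direct sum $T(G)=G_e\oplus G_o$ with $G_e*G_o=G_o*G_e=0$ replaces the paper's component-by-component analysis of $ab=ba$ in $G_o\oplus G_e$ and gives $\Soc(T(G))=\Soc(G_e)\oplus\Soc(G_o)$ immediately. The most substantial divergence is in (ii): the paper shows $2G*G\subseteq\Soc(G)$, bounds $|G_1/\Soc(G_1)|$ by $|G/I|\cdot|G_e|$ using that the multiplicative group of $G/T(G)$ is finitely generated (so that $G/I$ is a finite elementary abelian $2$-group), and then invokes Proposition~\ref{radical} for the finite quotient; you instead prove outright that the radical ring is nilpotent, since $G/T(G)$ is additively torsion-free (multiplicative and additive orders coincide, so $T(G)$ is the full additive torsion subgroup) and hence $2(a*b*c)=0$ forces $(G/T(G))^{*3}=0$, while $G_o^{*3}=0$ for the same reason and $G_e$ is a finite radical ring, hence nilpotent. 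A noteworthy consequence of your argument is that the hypothesis that the multiplicative group of $G/T(G)$ be finitely generated is never used: only the finiteness of $G_e$ matters, so your proof actually establishes a slightly stronger version of (ii) than the one stated.
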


\begin{proof}
Let $a\in G$. We  claim that $a^n=na$ for every positive integer $n$
by induction on $n$.  We prove this by induction on $n$. For $n=1$
this is clear. For $n=2$ we have $a^2=\lambda_a(a)+a=a+a=2a$.
Suppose that $n>2$ and that $a^m=ma$ for all $1\leq m<n$. Now we
have
\begin{eqnarray*}
a^n&=&aa^{n-1}=a((n-1)a)=a((n-2)a+a)\\
&=&a(a^{n-2}+a)=a^{n-1}+a^2-a=(n-1)a+2a-a=na. \end{eqnarray*} This
proves the claim. As a consequence we obtain that the multiplicative
order of $a$ is equal to its additive order.

If $a,b,c\in G$ then $a(c-b)+ab=a((c-b)+b)+a= ac+a$. Hence
$a(c-b)=ac-ab+a$ and in particular $a(1-a)=a-a^2+a$. Therefore
$a(-a)=a(1-a)=a-a^2+a=a-(a+a)+a=0$ and $a^{-1}=-a$. Thus, we also
get
\begin{eqnarray*}(b+c)a+a&=&(a^{-1}(b+c)^{-1})^{-1}+a\\
&=&(a^{-1}(-b-c))^{-1}+a=(a^{-1}(b^{-1}+c^{-1}))^{-1}+a\\
&=&(a^{-1}b^{-1}+a^{-1}c^{-1}-a^{-1})^{-1}+a\\
&=&((ba)^{-1}+(ca)^{-1}+a)^{-1}+a\\
&=&(-(ba)-(ca)+a)^{-1}+a=ba+ca-a+a\\
&=&ba+ca
\end{eqnarray*}
Hence $G$  also is a right brace and thus $G$ is a two-sided brace.

Since $G$ is a two-sided brace, $R=(G,+,*)$ is a radical ring, where
$a*b=ab-a-b$ for $a,b\in G$, see Proposition~\ref{rad}. The
condition $\lambda_{a}(a)=a$ is equivalent to saying that $a*a=1$,
which is the zero element of this radical ring. It follows that $R$
is anticommutative. Hence, for $a,b,c\in G$, we get
$a*b*c=-(a*c*b)=c*a*b$. Therefore $G*G$ lies in the center of $R$
and thus  the  multiplicative group of the brace $G$, which is the
adjoint group of the ring $R$, is nilpotent of class at most $2$.
Let $a\in \Soc(G)$. Then, for every $b\in G$, we have
$ab=\lambda_a(b)+a=b+a=-b^{-1}-a^{-1}=-\lambda_{a^{-1}}(b^{-1})
-a^{-1}=-a^{-1}b^{-1}=-(ba)^{-1}=ba$. Therefore $\Soc(G)\subseteq
\Z(G)$.

$(i)$ Since the multiplicative group of $G$ is nilpotent, by
\cite[5.2.7]{Robinson} the elements of finite order in $G$ form a
fully-invariant subgroup $T(G)$, $G/T(G)$ is torsion-free and
$T(G)=G_{e}G_{o}$, a direct product. We have seen that the additive
order of every $c\in G$ is equal to its multiplicative order. Hence,
by Lemma~\ref{lambda},  $T(G),G_{o},G_{e}$ are ideals of the
two-sided brace $G$ and they are braces themselves.

Let $a,b\in G$. Assume that $ab=ba$. Then
\begin{eqnarray*}
a+b&=&\lambda_{a+b}(a+b)=(a+b)(a+b)-a-b\\
&=&(a+b)a+(a+b)b-2a-2b\\
&=&a^2+ba-a+ab+b^2-b-2a-2b\\
&=&\lambda_a(a)+\lambda_b(b)+2ab-2a-2b=2ab-a-b.
\end{eqnarray*}
Thus $2(a+b)=2ab$.

Since the order of every element of $G_{o}$ is odd, it now follows
that $a+b=ab$, for all $a\in \Z(G_{o}),b\in G_{o}$. Therefore
$\lambda_a(b)=ab-a=b$ for all $a\in \Z(G_{o}),b\in G_{o}$. This
implies that $\Z(G_{o})\subseteq \Soc(G_{o})$. Since we also have
$\Z(G_{o})\supseteq \Soc(G_{o})$, we get $\Soc (G_{o})=\Z(G_{o})$.

For $a\in G_{o}, b\in G_{e}$ we have $\lambda_{a}(b) =ab -a \in
G_{e}, \lambda_{b}(a)=ba - b \in G_{o}$.    In view of
Lemma~\ref{lambda}, the elements $a,\lambda_{a}(b)$ are the
components of $ab$ in $(T(G),+)=G_{o}\oplus G_{e}$. Also,
$\lambda_{b}(a),b$ are the components of $ba$ in  $G_{o}\oplus
G_{e}$. Since $ab=ba$, we get that  $\lambda_{a}(b)=b$ and
$\lambda_{b}(a)=a$. Therefore $\Z(G_{o})\subseteq \Soc (T(G))$ and
$\Soc(G_{e})\subseteq \Soc(T(G))$. Thus
$\Z(G_{e})\Soc(G_{o})\subseteq\Soc(T(G))$. Conversely, let $c\in
\Soc(T(G))$.
 Then $c=ab=ba$ for $a\in G_{o}, b\in G_{e}$.
 Then $\lambda_{a}\lambda_{b}=\lambda_{b}\lambda_{a}=\id$.
 Now, for $x\in G_{o},y\in G_{e}$ we have
 $$x+y=\lambda_{a}\lambda_{b}(x+y)=\lambda_{a}\lambda_{b}(x)+\lambda_{a}\lambda_{b}(y)=
\lambda_{a}(x)+\lambda_{b}(y).$$ Hence $\lambda_{a}(x)=x,
\lambda_{b}(y)=y$.  Therefore $a\in \Soc(G_{o})$ and $b\in
\Soc(G_{e})$ and $(i)$ follows.

$(ii)$  We know that $G*G$ lies in the center of the radical ring
$R$. Since $R$ is anticommutative,  $2G*G*G=0$. Therefore
$0=a*b=ab-a-b=\lambda_a(b)-b$, for all $a\in 2G*G$ and $b\in G$.
Hence  $2G*G\subseteq \Soc(G)$. Let $G_1=G/\Soc(G)$  be the quotient
brace. Recall that the ideals of the radical ring $R$ coincide with
the ideals of the two-sided brace $G$ and if $J$ is an ideal of $R$,
then the ring $R/J$ is the radical ring corresponding to the
quotient brace of $G$ modulo $I$ (see the end of
Section~\ref{definitions}). Then clearly
$(2G+\Soc(G))/\Soc(G)\subseteq\Soc(G_1)$. Note that
$T(G)+2G+\Soc(G)=G_e+2G+\Soc(G)$. Let $I=G_e+2G+\Soc(G)$.  By
hypothesis, the multiplicative group of $G/T(G)$ is finitely
generated. Therefore also the multiplicative group of $G/I$ is
finitely generated (note that $G/I$ is the quotient brace, so it
also is a homomorphic image of the multiplicative group of $G$). As
in the beginning of the proof, the additive orders and the
multiplicative orders of elements in $G/I$ are the same. Whence the
multiplicative group of $G/I$ has the property that every element is
of order at most 2. So it is an elementary finite 2-group. Thus
$|G_1/\Soc(G_1)|\leq |G/I|\cdot |I/(2G+\Soc(G))|\leq|G/I|\cdot
|G_e|<\infty$. By Lemma~\ref{brace-soc}, $\Ret^2(G,r)$ is the
solution of the Yang-Baxter equation associated to the two-sided
brace $G_1/\Soc(G_1)$. Therefore, from Proposition~\ref{radical}, we
get that $(G,r)$ is a multipermutation solution.
\end{proof}

In the special case where $G=G_{o}$  and $G$ is abelian we get from
Theorem~\ref{square} that  $G=\Soc(G)$ and the associated solution
of the Yang-Baxter equation is a multipermutation solution of level
$\leq 1$ (compare with Theorem~\ref{elementary-abelian}).

In Section~\ref{Section-Examples} we give  some examples of
two-sided braces with square-free associated solutions and whose
multiplicative groups are not abelian. In particular, examples that
are nilpotent of class two exist.

Let $(X,s)$ be a finite set-theoretic non-degenerate involutive
square-free solution of the Yang-Baxter equation. Let $\bar
G=G(X,s)/\Soc(G(X,s))$. From the remark given after  the proof of
Theorem~\ref{abelian} we know that if the solution
$(\bar{G},\bar{r})$ associated to the left brace $\bar G$ is square
free then by Theorem~\ref{square} it is a multipermutation solution
and  thus $(X,s)$ is a multipermutation solution as well. Also from
the proof of Theorem~\ref{abelian}  we know that $\mathcal{G}(X,s)$
is isomorphic to the multiplicative group of $\bar{G}$. Hence
$\mathcal{G}(X,s)$ inherits a natural left brace structure from the
left brace $\bar{G}$. Therefore the solution of the Yang-Baxter
equation associated to the left brace $\mathcal{G}(X,s)$ is
isomorphic to $(\bar{G},\bar{r})$. However the following example
shows that $(\bar{G},\bar{r})$ is not necessarily square free. Using
Proposition~\ref{solution} it is easy to verify that
$(X=\{1,2,3,4,5,6\},s)$ is a  solution of the Yang-Baxter equation,
where $s(i,j)=(\sigma_i(j),\sigma^{-1}_{\sigma_i(j)}(i))$, with
$\sigma_1=\sigma_2=\sigma_3=\id$, $\sigma_4=(1,2,3)=\sigma_5^{-1}$
and $\sigma_6=(4,5)(2,3)$. Clearly it is square free, but its  $IYB$
group is isomorphic to $\Sym_3$ and it is not nilpotent. Thus by
Theorem~\ref{square}, the solution of the Yang-Baxter equation
associated to the left brace $\mathcal{G}(X,s)$ is not square free.

\section{Relations between braces and group rings} \label{groupringrelations}

It is easy to see that every abelian group is the multiplicative
group of a two-sided brace (see Section~\ref{Section-Examples}), in
particular finite abelian groups are IYB groups by
Corollary~\ref{finite-group-IYB}. Moreover,
Conjecture~\ref{Conjecture-Gateva} has a positive solution in case
the associated IYB group is abelian (Theorem~\ref{abelian} and
\cite[Theorem~7.1]{GC}). Thus a next natural step is to consider the
solutions for which the associated IYB group is nilpotent.

Natural examples of finite nilpotent groups are the adjoint groups
of finite  radical rings. However not every finite nilpotent group
is of this type, or equivalently it is  not the multiplicative group
of a finite two-sided brace (Proposition~\ref{rad}). This easily
follows from the following result of Kruse \cite{kruse}. If a finite
$p$-group $G$ is the adjoint group of a nilpotent ring, then $G$ has
a central series $G=Z_{0} \supseteq  Z_{1} \supseteq \cdots
\supseteq Z_{c}=\{ 1 \}$ in which $[Z_{i-1}: Z_{i}]\geq p^{2}$ for
$1\leq i< c$.
 However, we easily can prove the following result.

\begin{lemma}\label{pgroup}
Let $G$ by a locally finite $p$-group. Then $G$ is isomorphic to a
subgroup of the adjoint group of a radical ring. Furthermore, if $G$
is a finite $p$-group, then it is isomorphic to a subgroup of the
adjoint group of a radical ring $R$ such that $|R|$ is a power of
$p$.
\end{lemma}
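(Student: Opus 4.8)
The plan is to realize $G$ inside the adjoint group of the augmentation ideal of a group algebra over the field $\mathbb{F}_p$ of $p$ elements. Set $R=\omega(\mathbb{F}_p[G])$, the augmentation ideal of $\mathbb{F}_p[G]$, and recall that the circle operation is $a\circ b=ab+a+b$. First I would define the map $g\mapsto g-1$ from $G$ into $R$ and check that it is a group homomorphism into $(R,\circ)$: a direct computation gives $(g-1)\circ(h-1)=(g-1)+(h-1)+(g-1)(h-1)=gh-1$, and the map is injective since $g-1=h-1$ forces $g=h$. Thus, once $R$ is known to be a radical ring, Proposition~\ref{rad} guarantees that $(R,\circ)$ is a group and $G$ embeds into it.

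The key step is therefore to show that $R$ is radical, i.e.\ that every element of $R$ is quasi-regular for $\circ$. Here I would use local finiteness: any $\alpha\in R$ has finite support, so the subgroup $H\leq G$ generated by that support is finite and $\alpha\in\omega(\mathbb{F}_p[H])$. Since $H$ is a finite $p$-group and $\mathbb{F}_p$ has characteristic $p$, the algebra $\mathbb{F}_p[H]$ is local with Jacobson radical equal to $\omega(\mathbb{F}_p[H])$, and the latter is nilpotent; hence $\alpha$ is nilpotent, say $\alpha^m=0$. Then $-\alpha+\alpha^2-\cdots+(-1)^{m-1}\alpha^{m-1}$ is a circle inverse of $\alpha$. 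Thus $R$ is a nil ring, hence a radical ring, which settles the locally finite case.

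For the finite case I would note that when $G$ is a finite $p$-group the whole augmentation ideal $R=\omega(\mathbb{F}_p[G])$ is nilpotent, by the same local-algebra argument applied to $G$ itself, so $R$ is already radical; moreover $|R|=p^{\,|G|-1}$ is a power of $p$. The embedding $g\mapsto g-1$ then places $G$ in the adjoint group of this finite radical ring of $p$-power order, as required.

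I expect the only genuine obstacle to be the radicality of $R$ in the infinite (locally finite) setting, since there the augmentation ideal need not be nilpotent; the reduction to finite subgroups via finiteness of supports is exactly what makes every element quasi-regular and thereby keeps $R$ radical. The homomorphism verification and the order count are routine.
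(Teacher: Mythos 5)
Your proof is correct and follows essentially the same route as the paper: both embed $G$ into the adjoint group of $R=\omega(\mathbb{F}_p[G])$ via $g\mapsto g-1$ and verify that this is an injective homomorphism into $(R,\circ)$. The only difference is that the paper simply cites Passman (Lemma~8.1.17 of \emph{The Algebraic Structure of Group Rings}) for the fact that $\omega(\mathbb{F}_p[G])$ is radical, whereas you supply the standard direct argument via finite supports and nilpotency of the augmentation ideal for finite $p$-subgroups; this is a valid, self-contained substitute.
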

\begin{proof}
Let $K=\mathbb{Z}/p\mathbb{Z}$ be the field of $p$ elements. By
\cite[Lemma 8.1.17]{passman2}, $\omega(K[G])=J(K[G])$ is a radical
ring. Let $f\colon G\longrightarrow \omega(K[G])$ be the map defined
by $f(g)=g-1$ for all $g\in G$. It is clear that $f$ is injective.
Let $g,h\in G$. Then
$$f(gh)=gh-1=(g-1)(h-1)+(g-1)+(h-1)=(g-1)\circ
(h-1)=f(g)\circ f(h),$$ and the result follows.
\end{proof}

After these rather elementary comments we now give a recent
intriguing connection between the involutive Yang-Baxter groups (or
more general multiplicative groups of left braces) and fundamental
problems on integral group rings.

\begin{proposition} (Sysak \cite{sysak, sysak2})
Let $G$ be a group. Then $G$ is the multiplicative group of a left
brace if and only if there exists a left ideal $L$ of
$\mathbb{Z}[G]$ such that
\begin{itemize}
\item[(i)] the augmentation ideal $\omega(\mathbb{Z}[G])=G-1+L$ and
\item[(ii)] $G\cap (1+L)=\{ 1\}$.
\end{itemize}
\end{proposition}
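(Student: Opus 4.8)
The plan is to reduce everything to the bijective correspondence, recorded just after Proposition~\ref{Itype}, between left braces on a group $G$ and bijective $1$-cocycles $\pi\colon G\to A$ of $G$ with coefficients in a left $G$-module $A$ (for some left action), where a $1$-cocycle satisfies $\pi(gh)=\pi(g)+g\cdot\pi(h)$. It then suffices to translate ``existence of a left ideal $L$ satisfying (i) and (ii)'' into ``existence of a bijective $1$-cocycle''. The bridge is the augmentation ideal $\omega=\omega(\mathbb{Z}[G])$, which is free abelian with basis $\{g-1\mid g\in G,\ g\neq 1\}$ and which is a left $G$-module under left multiplication. The canonical map $\delta\colon G\to\omega$, $\delta(g)=g-1$, is a $1$-cocycle, since $g\cdot(h-1)=gh-g$ gives $\delta(g)+g\cdot\delta(h)=(g-1)+(gh-g)=gh-1=\delta(gh)$. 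Moreover $\delta$ is universal: any $1$-cocycle $\pi\colon G\to A$ factors uniquely as $\pi=\tilde\pi\circ\delta$ for a $\mathbb{Z}[G]$-module homomorphism $\tilde\pi\colon\omega\to A$ with $\tilde\pi(g-1)=\pi(g)$; this is well defined on the basis and is a module map because $\tilde\pi(h\cdot(g-1))=\tilde\pi((hg-1)-(h-1))=\pi(hg)-\pi(h)=h\cdot\pi(g)$.

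For the ``only if'' direction I would start from a left brace structure on $G$, which by the quoted correspondence yields a bijective $1$-cocycle $\pi\colon G\to A$, and set $L=\Ker(\tilde\pi)$, a $\mathbb{Z}[G]$-submodule of $\omega$, that is, a left ideal of $\mathbb{Z}[G]$ contained in $\omega$. Surjectivity of $\pi$ makes $\tilde\pi$ onto, so for $w\in\omega$ there is $g$ with $\tilde\pi(w)=\pi(g)=\tilde\pi(g-1)$, giving $w-(g-1)\in L$ and hence $\omega=G-1+L$, which is (i). For (ii), note $\pi(1)=\tilde\pi(1-1)=0$; if $g-1\in L$ then $\pi(g)=\tilde\pi(g-1)=0=\pi(1)$, so injectivity of $\pi$ forces $g=1$, i.e. $G\cap(1+L)=\{1\}$.

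For the ``if'' direction I would run this backwards. First observe that (i) already forces $L\subseteq\omega$: taking $g=1$ in $G-1+L=\omega$ shows each $\ell\in L$ lies in $\omega$. Hence $A:=\omega/L$ is a left $G$-module and $\pi\colon G\to A$, $\pi(g)=(g-1)+L$, is a $1$-cocycle, being $\delta$ followed by the quotient map; condition (i) says exactly that its image $(G-1+L)/L$ is all of $\omega/L$, so $\pi$ is surjective. The main obstacle, and the only place where the \emph{left}-ideal hypothesis is genuinely needed, is injectivity: if $\pi(g)=\pi(h)$ then $g-h\in L$, and since $L$ is closed under left multiplication by $h^{-1}\in G\subseteq\mathbb{Z}[G]$ we get $h^{-1}g-1=h^{-1}(g-h)\in L$, whereupon (ii) yields $h^{-1}g=1$, i.e. $g=h$. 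Thus $\pi$ is a bijective $1$-cocycle and, by the correspondence of the first paragraph, $G$ is the multiplicative group of a left brace. The delicate point throughout is this normalization step, where one must use that $L$ absorbs left multiplication in order to pass from the raw relation $g-h\in L$ to the form $h^{-1}g-1\in L$ on which condition (ii) can act.
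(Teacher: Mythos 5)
Your proof is correct, but it takes a genuinely different route from the one in the paper. The paper argues entirely by hand inside $\mathbb{Z}[G]$: given $L$, it defines $a\oplus b$ as the unique element with $(a\oplus b)-a-b+1\in L$ and verifies commutativity, associativity, inverses and the left-brace identity $c(a\oplus b)\oplus c=ca\oplus cb$ by direct computation; conversely it takes $I$ to be the additive subgroup generated by the elements $(a\oplus b)-a-b+1$, checks that $I$ is a left ideal, and proves $\omega(\mathbb{Z}[G])=G-1+I$ by an induction on $\sum_{x}|\alpha_x|$. You instead funnel both directions through the bijective $1$-cocycle correspondence recorded at the end of Section~\ref{groups}, combined with the universal property of the augmentation ideal ($1$-cocycles $G\to A$ correspond to $\mathbb{Z}[G]$-module maps $\omega(\mathbb{Z}[G])\to A$ via $g-1\mapsto\pi(g)$). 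This buys you two things: the brace axioms for the new sum come for free from the already-verified cocycle-to-brace construction, and condition (i) in the converse direction drops out of surjectivity of $\widetilde{\pi}$ with no induction --- the paper's induction is in effect absorbed into the statement that $\widetilde{\pi}$ is a well-defined homomorphism on the free abelian group $\omega(\mathbb{Z}[G])$. The two constructions produce the same left ideal (your $L=\Ker(\widetilde{\pi})$ contains the paper's $I$, and $\omega(\mathbb{Z}[G])=G-1+I$ together with $G\cap(1+I)=\{1\}$ forces equality), and both proofs invoke the left-ideal hypothesis at the same pivotal normalization step, passing from $g-h\in L$ to $h^{-1}g-1\in L$ so that condition (ii) can be applied. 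The paper's version is more self-contained and elementary; yours is shorter and makes explicit the conceptual role of the augmentation ideal as the universal target for $1$-cocycles, at the cost of leaning on the Section~\ref{groups} correspondence --- which the paper does establish, so the dependence is legitimate.
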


\begin{proof}
Suppose that there exists a left ideal $L$ of $\mathbb{Z}[G]$
satisfying $(i)$ and $(ii)$. Let $a,b\in G$. Then by $(i)$ there
exist $c\in G$ and $\alpha\in L$ such that $a-1+b-1=c-1+\alpha$.
Furthermore, if $a-1+b-1=d-1+\beta$, for $d\in G$ and $\beta\in L$,
then $c-d\in L$. Hence $d^{-1}c-1\in L$ and, by $(ii)$, $d^{-1}c=1$.
Thus there is a unique $c\in G$ such that $a-1+b-1-c+1\in L$. We
define a sum $\oplus$ on $G$ by
$$(a\oplus b)-a-b+1\in L,$$
for $a,b\in G$.

Clearly $a\oplus b=b\oplus a$. Let $a,b,c\in G$. Then $((a\oplus
b)\oplus c)-(a\oplus b)-c+1\in L.$ Since $(a\oplus b)-a-b+1\in L$,
we have that $((a\oplus b)\oplus c)-a-b-c+2\in L.$ Since $(b\oplus
c)-b-c+1\in L$, we have $((a\oplus b)\oplus c)-a-(b\oplus c)+1\in
L.$ Therefore $(a\oplus b)\oplus c=a\oplus (b\oplus c).$ Note that
$(a\oplus 1)-a-1+1\in L$. Therefore $a^{-1}(a\oplus 1)\in 1+L$, and
by $(ii)$, $a\oplus 1=a$. Furthermore, for  $a\in G$, there exists a
unique $\ominus a\in G$ such that $1-a-(\ominus a)+1\in L$. Then
$a\oplus(\ominus a)=1$. Hence $(G,\oplus)$ is an abelian group. Let
$a,b,c\in G$. Then
\begin{eqnarray*} (c(a\oplus
b)\oplus c)-ca-cb+1&=&(c(a\oplus b)\oplus c)-c(a\oplus
b)-c+1\\
&&+\, c(a\oplus
b)+c-1-ca-cb+1\\
&=&(c(a\oplus b)\oplus c)-c(a\oplus b)-c+1\\
&&+\, c((a\oplus b)-a-b+1)\in L.
\end{eqnarray*}
Therefore $c(a\oplus b)\oplus c=ca\oplus cb.$ Thus $G$ is the
multiplicative group of a left brace.

Suppose now that $G$ is the multiplicative group of a left brace.
Denote by $\oplus$ the sum of the left brace $G$. Let $I$ be the
additive subgroup of $\omega(\mathbb{Z}[G])$ generated by all the
elements of the form $(a\oplus b)-a-b+1$, for $a,b\in G$. We shall
see that $I$ is a left ideal of $\mathbb{Z}[G]$. To prove this, it
is sufficient to show that $c((a\oplus b)-a-b+1)\in I$, for all
$a,b,c\in G$. Note that
\begin{eqnarray*}
c((a\oplus b)-a-b+1)&=&c(a\oplus b)-ca-cb+c\\
&=&(c(a\oplus b)\oplus c)-(c(a\oplus b)\oplus c)\\
&&+\, c(a\oplus b)-ca-cb+c\\
&=&(ca\oplus cb)-(c(a\oplus b)\oplus c)+c(a\oplus b)-ca-cb+c\\
&=&(ca\oplus cb)-ca-cb+1\\
&&-\, (c(a\oplus b)\oplus c)+c(a\oplus b)+c-1 \in I.
\end{eqnarray*}
Therefore $I$ is a left ideal.

Let $\alpha=\sum_{x\in G}\alpha_xx\in\omega( \mathbb{Z}[G])$. We
shall prove that $\alpha\in G-1+I$ by induction on $\sum_{x\in
G}|\alpha_x|$. If $\sum_{x\in G}|\alpha_x|=0$, then $\alpha=0=1-1\in
G-1+I$. If $\sum_{x\in G}|\alpha_x|=2$, then $\alpha=x-y$, for some
$x,y\in G$ with $x\neq y$. In this case, let $z\in G$ such that
$y\oplus z=1$. Then
\begin{eqnarray*}\alpha&=&x-1+(y\oplus
z)-y-z+1+z-1\\
&=&(x\oplus z)-1-(x\oplus z)+x+z-1+(y\oplus z)-y-z+1\in
G-1+I.\end{eqnarray*} Suppose that $\sum_{x\in G}|\alpha_x|=n>2$ and
that $\beta\in G-1+I$, for all $\beta=\sum_{x\in G}\beta_xx\in
\omega(\mathbb{Z}[G])$ such that $\sum_{x\in G}|\beta_x|<n$. Because
$\alpha \in \omega (\mathbb{Z}G)$ there exist $x,y\in G$ such that
 $\alpha_x\alpha_y<0$.  By symmetry we may assume that
 $\alpha_x<0$.
 Then, $\beta=\alpha+x-y\in G-1+I$, by induction
hypothesis, and therefore $\alpha=y-x+g-1+\gamma$, for some $g\in G$
and some $\gamma\in I$. Since $y-x=h-1+\delta$ for some $h\in G$ and
some $\delta\in I$, we have that
$$\alpha=h+g-2+\gamma+\delta=(h\oplus g)-1-(h\oplus g)+h+g-1+\gamma+\delta\in G-1+I.$$
Hence $\omega(\mathbb{Z}[G])=G-1+I$. Note that
$\omega(\mathbb{Z}[G])$ is the free abelian group $\bigoplus_{g\in
G}\mathbb{Z}(g-1)$. Let
$\psi\colon\omega(\mathbb{Z}[G])\longrightarrow (G,\oplus)$ be the
morphism of abelian groups such that $\psi(g-1)=g$. It is easy to
see that $I\subseteq \ker(\psi)$. Therefore $G\cap (1+I)=\{ 1\}$.
\end{proof}

In a similar way we can prove the following result.

\begin{proposition}\label{twosided}
Let $G$ be a group. Then $G$ is the multiplicative group of a
two-sided brace if and only if there exists an ideal $L$ of
$\mathbb{Z}[G]$ such that
\begin{itemize}
\item[(i)] the augmentation ideal $\omega(\mathbb{Z}[G])=G-1+L$ and
\item[(ii)] $G\cap (1+L)=\{ 1\}$.
\end{itemize}
\end{proposition}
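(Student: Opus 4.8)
The plan is to follow the proof of the preceding proposition almost verbatim, upgrading each one-sided argument to its two-sided analogue. Everything that concerns only the additive structure $(G,\oplus)$ (well-definedness of $\oplus$, the abelian group axioms, the description $\omega(\mathbb{Z}[G])=G-1+I$, and the computation of $G\cap(1+I)$) carries over unchanged, since those steps never used one-sidedness. Thus the only genuinely new input is the interplay between the \emph{right}-ideal property of $L$ and the right distributivity law (\ref{RE}); this is what converts the left-brace statement into a two-sided-brace statement in both directions.

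For the ``if'' direction, assume $L$ is a two-sided ideal satisfying (i) and (ii). As before, (i) and (ii) let me define $\oplus$ by declaring $a\oplus b$ to be the unique element of $G$ with $(a\oplus b)-a-b+1\in L$, and the proofs that $(G,\oplus)$ is an abelian group and that $c(a\oplus b)\oplus c=ca\oplus cb$ (the left law (\ref{LE})) are identical to the earlier ones and use only that $L$ is a left ideal. The new step is to derive (\ref{RE}). Using that $L$ is also a right ideal, for $a,b,c\in G$ I would write
\begin{eqnarray*}
((a\oplus b)c\oplus c)-ac-bc+1&=&\bigl(((a\oplus b)c\oplus c)-(a\oplus b)c-c+1\bigr)\\
&&+\,\bigl((a\oplus b)-a-b+1\bigr)c,
\end{eqnarray*}
where the identity $(a\oplus b)c-ac-bc+c=((a\oplus b)-a-b+1)c$ is used. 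The first summand lies in $L$ by the definition of $\oplus$, and the second lies in $L$ because $(a\oplus b)-a-b+1\in L$ and $L$ is a right ideal. Hence $((a\oplus b)c\oplus c)-ac-bc+1\in L$, and the uniqueness in the definition of $\oplus$ forces $(a\oplus b)c\oplus c=ac\oplus bc$, which is exactly (\ref{RE}). Therefore $G$ is a two-sided brace.

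For the ``only if'' direction, assume $G$ is the multiplicative group of a two-sided brace with addition $\oplus$, and let $I$ be the additive subgroup of $\omega(\mathbb{Z}[G])$ generated by the elements $(a\oplus b)-a-b+1$. The proof that $I$ is a \emph{left} ideal is the same as before. To see that $I$ is a \emph{right} ideal I would use (\ref{RE}) in the form $(a\oplus b)c\oplus c=ac\oplus bc$ and compute, for $a,b,c\in G$,
\begin{eqnarray*}
((a\oplus b)-a-b+1)c&=&(a\oplus b)c-ac-bc+c\\
&=&((a\oplus b)c\oplus c)-((a\oplus b)c\oplus c)\\
&&+\,(a\oplus b)c-ac-bc+c\\
&=&(ac\oplus bc)-((a\oplus b)c\oplus c)\\
&&+\,(a\oplus b)c-ac-bc+c\\
&=&(ac\oplus bc)-ac-bc+1\\
&&-\,\bigl(((a\oplus b)c\oplus c)-(a\oplus b)c-c+1\bigr)\in I,
\end{eqnarray*}
since both $(ac\oplus bc)-ac-bc+1$ and $((a\oplus b)c\oplus c)-(a\oplus b)c-c+1$ are among the generators of $I$. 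This makes $I$ a two-sided ideal. The remaining verifications that $\omega(\mathbb{Z}[G])=G-1+I$ and that $G\cap(1+I)=\{1\}$ (via the cocycle map $\psi$ with $\psi(g-1)=g$) depend only on the additive group $(G,\oplus)$ and transfer without change, so $L=I$ is the desired ideal. I do not expect a serious obstacle here; the one delicate point is matching the right-multiplied generator to a difference of two generators of $I$, which is precisely where (\ref{RE}) enters, and beyond this bookkeeping the argument is a mirror image of the one-sided case.
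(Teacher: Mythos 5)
Your proposal is correct and matches the paper's intent exactly: the paper gives no explicit proof of Proposition~\ref{twosided}, saying only that it is proved ``in a similar way'' to the preceding Sysak proposition, and your argument is precisely that adaptation, with the two new computations (deriving (\ref{RE}) from the right-ideal property of $L$, and conversely showing $I$ is a right ideal using (\ref{RE})) carried out correctly. No gaps.
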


Proposition~\ref{twosided} for finite groups essentially was proved
by Sandling \cite[Theorem 1.5]{sandling}. It is a crucial property
to prove the following positive solution to the isomorphism problem
of integral group rings of adjoint groups of finite radical rings.
(See also \cite[Section 9.4]{PolcinoMiliesSehgal} for a proof of
this result.)

\begin{theorem}  (Sandling, \cite[Theorem 3.1]{sandling})
Let $G$ be a finite group isomorphic to the adjoint group of a
radical ring. If  $H$ is  a group such that $\mathbb{Z}[G]\cong
\mathbb{Z}[H]$, then $G\cong H$.
\end{theorem}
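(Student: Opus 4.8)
The plan is to realise $G$ concretely and then transport the structure furnished by Proposition~\ref{twosided} along the ring isomorphism. Since $G$ is isomorphic to the adjoint group of a radical ring, write $G=(R,\circ)$ for a finite radical ring $R$, where $a\circ b=a+b+ab$, and adjoin a unit to obtain the ring $R^{1}=\mathbb{Z}\oplus R$. The rule $g\mapsto 1+g$, identifying $g\in G$ with its image in $R$, extends to a surjective, augmentation-preserving ring homomorphism $\varphi\colon \mathbb{Z}[G]\longrightarrow R^{1}$ whose restriction to $G$ is a bijection onto the unit subgroup $(1+R,\cdot)\cong(R,\circ)=G$. Its kernel $L=\Ker(\varphi)$ lies in $\omega(\mathbb{Z}[G])$; by Proposition~\ref{rad} the additive group of the associated two-sided brace is $(R,+)$, and one checks that $L$ is an ideal as in Proposition~\ref{twosided}, so that $\omega(\mathbb{Z}[G])=(G-1)+L$ and $G\cap(1+L)=\{1\}$, while $\varphi$ induces a ring isomorphism $\omega(\mathbb{Z}[G])/L\cong R$.

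Now suppose $\mathbb{Z}[G]\cong\mathbb{Z}[H]$. First I would normalise: comparing $\mathbb{Z}$-ranks gives $|G|=|H|$, and after twisting by a suitable sign character one may assume the isomorphism $\theta\colon\mathbb{Z}[G]\to\mathbb{Z}[H]$ is augmentation preserving, so $\theta(\omega(\mathbb{Z}[G]))=\omega(\mathbb{Z}[H])$. Then $\varphi'=\varphi\circ\theta^{-1}\colon\mathbb{Z}[H]\to R^{1}$ is again a surjective augmentation-preserving ring homomorphism, with kernel $L'=\theta(L)$, and its restriction to $H$ is a group homomorphism $\Phi\colon H\to(1+R,\cdot)=G$, $h\mapsto 1+b_{h}$ with $b_{h}\in R$. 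Since $|H|=|G|$ is finite, $\Phi$ is bijective as soon as it is injective, and injectivity of $\Phi$ is precisely condition~(ii) of Proposition~\ref{twosided} for the pair $(H,L')$, namely $H\cap(1+L')=\{1\}$; once this is known, $\Phi$ is an isomorphism and $G\cong H$. As a byproduct condition~(i) for $(H,L')$ then follows, so $H$ is again the multiplicative group of a two-sided brace.

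The whole theorem thus reduces to the single statement $H\cap(1+L')=\{1\}$, and this is the step I expect to be the main obstacle. It is not a formal consequence of the hypothesis $G\cap(1+L)=\{1\}$: the latter transports under $\theta$ only to $\theta(G)\cap(1+L')=\{1\}$, a statement about the group basis $\theta(G)$, whereas $H$ and $\theta(G)$ are in general distinct group bases of $\mathbb{Z}[H]$; overcoming this gap is exactly the content of Sandling's theorem. The extra leverage I would exploit is that $\theta$, being a ring isomorphism, preserves the powers of the augmentation ideal, $\theta(\omega(\mathbb{Z}[G])^{n})=\omega(\mathbb{Z}[H])^{n}$, while $\varphi'$ carries this filtration onto the filtration $R\supseteq R^{2}\supseteq\cdots\supseteq R^{m}=\{0\}$ of the nilpotent ring $R$. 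Comparing the associated graded objects and the induced dimension-subgroup filtration of $H$ lets one control both group bases simultaneously; since $G=(R,\circ)$ is nilpotent, nilpotency is inherited by $H$ and $\mathbb{Z}[G]\cong\mathbb{Z}[H]$ passes to each $p$-component, so one may reduce to the case of a $p$-group, where this filtration comparison becomes manageable and yields $H\cap(1+L')=\{1\}$. This delicate, genuinely ring-theoretic analysis is the heart of the argument, for which I would follow Sandling's original treatment.
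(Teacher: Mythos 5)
The paper does not actually prove this theorem: it is quoted as an external result of Sandling, with a pointer to \cite[Theorem 3.1]{sandling} and to \cite[Section 9.4]{PolcinoMiliesSehgal} for a proof, so there is no internal argument to compare yours against. Your set-up is sound and consistent with how the paper frames the situation: realising $G$ as $(1+R,\cdot)$ inside $R^{1}=\mathbb{Z}\oplus R$, taking $L=\Ker(\varphi)$ as the two-sided ideal of Proposition~\ref{twosided}, normalising $\theta$ to be augmentation-preserving, and observing that the composite $\Phi=\varphi\circ\theta^{-1}|_{H}\colon H\to(1+R,\cdot)\cong G$ is a group homomorphism between groups of equal finite order whose injectivity is exactly the condition $H\cap(1+L')=\{1\}$. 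That reduction is correct, and you correctly identify that $G\cap(1+L)=\{1\}$ only transports to a statement about the group basis $\theta(G)$, not about $H$.

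The genuine gap is that the reduction is where your proof stops. The entire content of Sandling's theorem is the assertion $H\cap(1+L')=\{1\}$, and for that you offer only a programme -- compare the filtrations $\theta(\omega(\mathbb{Z}[G])^{n})=\omega(\mathbb{Z}[H])^{n}$ and $R\supseteq R^{2}\supseteq\cdots$, pass to associated graded objects and dimension subgroups, reduce to $p$-groups -- followed by an explicit deferral to ``Sandling's original treatment.'' None of these steps is carried out, and the one you call ``manageable'' is precisely where all the difficulty lives: knowing that $\theta$ preserves the augmentation filtration controls the images of $h-1$ modulo $\omega(\mathbb{Z}[H])^{n+1}$, but turning that graded information into the ungraded statement $h-1\in L'\Rightarrow h=1$ requires a genuine argument (Sandling's actual proof works through the quotient $\mathbb{Z}[G]/L\cong R^{1}$ and the way the circle-group structure is encoded in it, not through a routine induction on the filtration). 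As written, your text is an honest and correctly organised reduction of the theorem to its hard kernel, but it is not a proof of the theorem.
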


Roggenkamp and Scott gave in \cite{RS} an affirmative answer to the
isomorphism problem for finite nilpotent groups, thus generalizing
the result of Sandling.

\begin{remark}   \label{zg} {\rm
Suppose that for a group $G$, there exists a left ideal $L$ of
$\mathbb{Z}[G]$ such that $\omega(\mathbb{Z}[G])=G-1+L$ and $G\cap
(1+L)=\{ 1\}$. Let $\alpha$ be a unit in $\mathbb{Z}[G]$ of
augmentation $1$. Then there exist $g\in G$ and $\beta\in L$ such
that
$$\alpha-1=g-1+\beta.$$
Thus $\alpha=g(1+g^{-1}\beta)$. Thus the group of units of
$\mathbb{Z}[G]$ is $\mathcal{U}(\mathbb{Z}[G])=(\pm G)H$, where
$H=(1+L)\cap \mathcal{U}(\mathbb{Z}[G])$. Furthermore, since $G\cap
(1+L)=\{ 1\}$, we have that $(\pm G)\cap H=\{ 1\}$. If $L$ is a
two-sided ideal, then $H$ is a normal subgroup of
$\mathcal{U}(\mathbb{Z}[G])$. Such normal subgroups $H$ are called
normal complements in the group of units of the group ring. The
existence of normal complements is related with the isomorphism
problem. For finite nilpotent groups, an affirmative answer to the
existence of normal complements implies an affirmative answer to the
isomorphism problem \cite[Proposition (30.4)]{sehgal}. The existence
of normal complements is known for  finite nilpotent groups of class
$2$. But for general finite nilpotent groups it remains an open
question (see \cite[Section 34 and Problem 30]{sehgal}). }
\end{remark}

\section{Constructions and examples of braces} \label{Section-Examples}

The results stated in the previous sections show that an important
problem is to describe when a (finite) group is the multiplicative
group of a left brace. In this section we give several examples of
such groups. We begin with a natural construction that can be
applied in this context.

Let $G,H$ be two left braces. Then it is easy to see that the direct
product $G\times H$ of the multiplicative groups of the braces $G$
and $H$ with the sum defined componentwise is a left brace called
the direct product of the braces $G$ and $H$.

We shall construct the semidirect product of two braces as
introduced by Rump in \cite{rump6}. Let $N,H$ be left braces. Let
$\eta\colon H\longrightarrow \Aut(N)$ be a homomorphism of groups
from the multiplicative group of $H$ to the group of automorphisms
of the left brace $N$ (see Definition~\ref{hom}).

Consider the semidirect product $N\rtimes H$ of the multiplicative
groups $N$ and $H$ via $\eta$. Define in $N\rtimes H$ a sum by
$$(g_1,h_1)+(g_2,h_2)=(g_1+g_2,h_1+h_2),$$
for $g_1,g_2\in N$ and $h_1,h_2\in H$. It is clear that $(N\rtimes
H, +)$ is an abelian group. We prove that the group $N\rtimes H$
with this sum is a left brace. Indeed, let $g_1,g_2,g_3\in N$ and
$h_1,h_2,h_3\in H$. Then
\begin{eqnarray*}
\lefteqn{(g_1,h_1)((g_2,h_2)+(g_3,h_3))+(g_1,h_1)}\\
&=&(g_1,h_1)(g_2+g_3,h_2+h_3)+(g_1,h_1)\\
&=&(g_1\eta(h_1)(g_2+g_3),h_1(h_2+h_3))+(g_1,h_1)\\
&=&(g_1(\eta(h_1)(g_2)+\eta(h_1)(g_3)),h_1(h_2+h_3))+(g_1,h_1)\\
&=&(g_1(\eta(h_1)(g_2)+\eta(h_1)(g_3))+g_1,h_1(h_2+h_3)+h_1)\\
&=&(g_1\eta(h_1)(g_2)+g_1\eta(h_1)(g_3),h_1h_2+h_1h_3)\\
&=&(g_1\eta(h_1)(g_2),h_1h_2)+(g_1\eta(h_1)(g_3),h_1h_3)\\
&=&(g_1,h_1)(g_2,h_2)+(g_1,h_1)(g_3,h_3).\\
\end{eqnarray*}
Hence, $N\rtimes H$ is a left brace. We call this left brace the
semidirect product of the left braces $N$ and $H$.

Similarly the semidirect product $A\ltimes B$ of two right braces
$A,B$ is defined (via a right action of the right brace $A$ on the
right brace $B$).

In view of Remark~\ref{braceIYB}, this construction
of the semidirect product  for finite braces is
essentially \cite[Theorem 3.4]{CJR}. As a consequence we can prove
the following result that generalizes \cite[Corollary 3.5 and Corollary 3.6]{CJR}.

\begin{corollary}
Let $G,H$ be two left braces. Then the wreath product $G\wr H$ of
the groups $G$ and $H$ is the multiplicative group of a left brace.
Moreover, every finite solvable group is a subgroup of an IYB group.
\end{corollary}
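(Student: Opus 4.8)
The plan is to build the wreath-product brace by combining the two constructions introduced just above: the direct product of braces for the base group and the semidirect product of braces for the top group. First I would form the base brace $B$ as the direct product (or restricted direct product, depending on the flavour of wreath product one wants) of copies $G_h$ of the left brace $G$ indexed by the multiplicative group of $H$, with both operations defined componentwise; the direct-product construction recalled above, applied to an arbitrary family, shows that $B$ is a left brace, the verification of (\ref{LE}) being componentwise. The multiplicative group of $H$ then acts on $B$ by permuting the coordinates according to the regular action of $H$ on itself, that is, $\eta(h)$ sends $(g_k)_{k\in H}$ to $(g_{h^{-1}k})_{k\in H}$. Since both $+$ and $\cdot$ on $B$ are componentwise, each $\eta(h)$ preserves them, so $\eta(h)\in\Aut(B)$ and $\eta\colon H\longrightarrow \Aut(B)$ is a homomorphism of the multiplicative group of $H$ into the automorphism group of the brace $B$.

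With $\eta$ in hand I would apply the semidirect product of braces construction to $N=B$ and $H$, obtaining a left brace $B\rtimes H$. By the very definition of that construction its multiplicative group is the semidirect product of the multiplicative group of $B$ (namely the direct product of copies of the multiplicative group of $G$) with the multiplicative group of $H$, acting by the coordinate permutation $\eta$; this is precisely the wreath product $G\wr H$. Thus $G\wr H$ is the multiplicative group of a left brace, and it is finite whenever $G$ and $H$ are finite.

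For the second assertion I would reduce to the abelian case via iterated wreath products. Every finite abelian group is the multiplicative group of a (two-sided) brace in which the multiplication equals the addition, hence is an IYB group by Corollary~\ref{finite-group-IYB}. By the Kaloujnine--Krasner embedding theorem an extension embeds into the wreath product of its two factors, and since an embedding of base groups induces an embedding of the corresponding wreath products, an easy induction on the derived length shows that every finite solvable group embeds into an iterated wreath product $W$ of finite abelian groups. Iterating the first part of the corollary, $W$ is the multiplicative group of a finite left brace, and being finite it is an IYB group; therefore the given finite solvable group is a subgroup of the IYB group $W$.

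The main obstacle I anticipate is bookkeeping rather than conceptual: one must check carefully that the coordinate-permutation maps $\eta(h)$ are automorphisms of the brace $B$ (not merely of its additive or of its multiplicative group separately) and that the multiplicative group of the brace $B\rtimes H$ is canonically the group-theoretic wreath product $G\wr H$ with its standard action. Once these identifications are secured, the solvable-group statement follows formally from the Kaloujnine--Krasner theorem together with the fact, established in the first part, that wreath products of multiplicative groups of braces remain within the class of multiplicative groups of braces.
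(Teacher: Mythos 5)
Your proposal is correct and follows essentially the same route as the paper: endow the base group $W$ (your restricted direct product of copies of $G$ indexed by $H$) with the componentwise sum, check that the coordinate-permuting action of $H$ is by brace automorphisms, and apply the semidirect product of braces; the only cosmetic difference is that for the second assertion you rederive via Kaloujnine--Krasner the embedding of a finite solvable group into an iterated wreath product of finite abelian groups, where the paper simply cites Huppert, Satz~I.15.9.
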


\begin{proof}
Recall that the wreath product of $G$ by $H$ is the semidirect
product $W\rtimes H$, where $W=\{ f\colon H\longrightarrow G\mid
|\{h\in H\mid f(h)\neq 1\}|<\infty\}$ and the action of $H$ on $W$
is given by the homomorphism $\sigma\colon H\longrightarrow \Aut(W)$
defined by $\sigma(h)(f)(x)=f(hx)$, for all $h,x\in H$ and $f\in W$.

We define a sum on $W$ by $(f+g)(h)=f(h)+g(h)$, for $f,g\in W$ and
$h\in H$. It is clear that $(W,+)$ is an abelian group. We shall see
that $W$ with this sum is a left brace. Let $f,g_1,g_2\in W$ and
$h\in H$. Then
\begin{eqnarray*}
(f(g_1+g_2)+f)(h)&=&(f(g_1+g_2))(h)+f(h)=f(h)(g_1+g_2)(h)+f(h)\\
&=&f(h)(g_1(h)+g_2(h))+f(h)=f(h)g_1(h)+f(h)g_2(h)\\
&=&(fg_1)(h)+(fg_2)(h)=(fg_1+fg_2)(h).
\end{eqnarray*}
Therefore $f(g_1+g_2)+f=fg_1+fg_2$, and thus $W$ is a left brace.

In order to prove that $G\wr H$ is the multiplicative group of a
left brace it is sufficient to show that for every $h\in H$ the map
$\sigma(h)$ is a homomorphism of left braces. Then $G\wr H$ will be
the multiplicative group of the semidirect product of the left
braces $W$ by $H$.

Let $h, x\in H$ and $f,g\in W$. Then
\begin{eqnarray*}
\sigma(h)(f+g)(x)&=&(f+g)(hx)=f(hx)+g(hx)=\sigma(h)(f)(x)+\sigma(h)(g)(x)\\
&=&(\sigma(h)(f)+\sigma(h)(g))(x).
\end{eqnarray*}
Therefore $\sigma(h)(f+g)=\sigma(h)(f)+\sigma(h)(g).$ Hence
$\sigma(h)$ is a homomorphism of left braces, and the first part of
the result follows.

The second part follows from the first part and  \cite[Satz
I.15.9]{Huppert}. The latter says that a finite solvable group is
a subgroup of an iterated wreath product of finite abelian groups.
\end{proof}

We continue with some concrete constructions of braces.

\begin{example} Abelian Groups\\
{\rm Let $G$ be a multiplicative abelian group. Define an operation
$+$ on $G$ by $$a+b=ab$$ for $a,b\in G$. Then clearly
$$a(b+c)+a=abca=abac=ab+ac$$ and thus  $$(b+c)a+a= ba+ca$$ for all
$a,b,c\in G$. Therefore $(G,+,\cdot)$ is a two-sided brace. In this
case $\lambda_{a}=\id_{G}$ for every $a\in G$. Further, $\Soc
(G)=G$. The solution of the Yang-Baxter equation associated to $G$
is a multipermutation solution of level at most $1$ with trivial IYB
group. Note that a multiplicative  abelian group admits many
structures of a brace (see for example the proof of
Theorem~\ref{elementary-abelian} and \cite{rump4}). }
\end{example}

\begin{example}  Nilpotent Groups  of class $2$  (Ault and Watters~\cite{aultwatters})\\
 {\rm  Let $G$ be a nilpotent group of class $2$. Suppose
that $\Z(G)$ contains a subgroup $Z$ so that $G/Z$ is a weak
direct product of cyclic groups. Let $[x]$ denote the image of
$x\in G$ under the natural homomorphism $G\longrightarrow G/Z$.
Then there exists a well-ordered set $(I,\leq)$ and $a_{i}\in G$,
for $i\in I$, such that $G/Z =\prod_{i\in I}\langle
\overline{a_{i}} \rangle $. Thus every element of $G$ can be
written in the form
$$a_{i_1}^{m_{1}}\cdots a_{i_n}^{m_{n}}z$$ for $i_1<\dots <i_n$ in $I$, some non-negative
integers $m_{1},\ldots ,m_{n}$ and some $z\in Z$. We define an
operation $+$ on $G$ by
$$a_{i_1}^{m_{1}}\cdots a_{i_n}^{m_{n}}z + a_{i_1}^{k_{1}}\cdots a_{i_n}^{k_{n}}z'
=a_{i_1}^{m_{1}+k_{1}}\cdots a_{i_n}^{m_{n}+k_{n}}zz'.$$ It is easy
to check that this operation is well-defined and $(G,+,\cdot)$ is a
two-sided brace. Clearly, $Z\subseteq \Soc (G)$. Hence, the solution
$(G,r)$ of the Yang-Baxter equation associated to $G$ is a
multipermutation solution of level  $2$ such that the group
$\mathcal{G}(G,r)$ is abelian. Note that there exist $i<j$ in $I$
such that $[a_j,a_i]\neq 1$. Since
$$\lambda_{a_{i}a_{j}}(a_{i}a_{j})=a_{i}a_{j}a_{i}a_{j}-a_{i}a_{j} =
a_{i}^{2}a_{j}^{2}[a_{j},a_{i}]- a_{i}a_{j}=
a_{i}a_{j}[a_{j},a_{i}]\neq a_{i}a_{j},$$ it follows that this
solution is not square free.}
\end{example}

In \cite{aultwatters} Ault and Watters conjectured that every
nilpotent group of class $2$ is the adjoint group of a radical ring
of nilpotency class $3$. They show that this is true for nilpotent
groups $G$ of class $2$  that satisfies one of the following
properties:
\begin{itemize}
\item[(i)] $G/\Z(G)$ is a weak direct product of cyclic groups, (as we have seen above).
\item[(ii)] $G/\Z(G)$ is a torsion group.
\item[(iii)] Every element of $[G,G]$ has a unique square root.
\end{itemize}
In \cite{HalesPassi} Hales and Passi gave a counterexample to the
conjecture of Ault and Watters. They also gave another proof of the
results of Ault and Watters in the cases $(i)$ and $(ii)$ and showed
that a nilpotent group $G$ of class $2$ is the adjoint group of a
radical ring of nilpotency class $3$ if it satisfies any of the
following properties:
\begin{itemize}
\item[(iv)] $G/\Z(G)$ is a uniquely $2$-divisible group.
\item[(v)] $G/N$ is a torsion-free group and completely decomposable
(i. e. a weak direct product of rank one groups) for some normal
subgroup $N$ such that $[G,G]\subseteq N\subseteq \Z(G)$.
\end{itemize}

The following result extends these results and gives many examples
of nilpotent groups of class $2$ with another structure of two-sided
brace and such that the solution of the Yang-Baxter equation
associated to this type of brace is square free. This is of interest
in view of the assertion of Theorem~\ref{square}.

\begin{proposition}\label{nilpotent}
Let $G$ be a nilpotent group of class $2$. Let $H$ be the subgroup
$H=\{ g^2z\mid g\in G,\; z\in \Z(G)\}$ of $G$. Then the operation
$+$ defined on $H$ by
$$h_1^2z_1+h_2^2z_2=(h_1h_2)^2z_1z_2[h_2,h_1]$$
for $h_1,h_2\in G$  and $z_1,z_2\in \Z(G)$, is well defined and
$(H,+,\cdot)$ is a two-sided brace.  The radical ring associated to
the two-sided brace $H$ is nilpotent of nilpotency class at most
$3$. Furthermore, the solution of the Yang-Baxter equation
associated to the brace $H$ is square free.
\end{proposition}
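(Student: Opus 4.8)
The plan is to reduce everything to the two standard identities available in a nilpotency-class-$2$ group $G$: every commutator $[x,y]=x^{-1}y^{-1}xy$ is central and the commutator map is bilinear, so that $(xy)^2=x^2y^2[y,x]$ and $[x,y]^2=[x^2,y]=[x,y^2]$. First I would record the elementary facts. Writing elements of $H$ as $g^2z$ with $g\in G$, $z\in\Z(G)$, the product $(g^2z)(h^2w)=(gh)^2[g,h]zw$ and the inverse $(g^2z)^{-1}=(g^{-1})^2z^{-1}$ are again of this form, so $H$ is a subgroup of $G$. I would also note the basic compatibility that on central elements the two operations agree: for $c\in\Z(G)$ and any $x=k^2w\in H$ one has $x+c=(k\cdot1)^2(wc)[1,k]=xc$, and in particular the additive inverse of a central $c$ is its group inverse $c^{-1}$.

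The hard part is well-definedness of $+$, because an element of $H$ has many representations $g^2z$ (any $g$ with $g^2\equiv a\bmod\Z(G)$ works). Rewriting the defining formula as $a+b=h_1^2h_2^2z_1z_2[h_2,h_1]^2=(ab)\,[h_2,h_1]^2$, where $ab$ is the (well-defined) group product, the whole issue is whether the correction $[h_2,h_1]^2$ depends only on $a$ and $b$. Here the identity $[h_2,h_1]^2=[h_2,h_1^2]=[h_2^2,h_1]$ is decisive: the expression $[h_2,h_1^2]$ depends on $h_1$ only through $h_1^2=az_1^{-1}$, and the central factor $z_1^{-1}$ drops out of the commutator, so this term is insensitive to the chosen representative of $a$ (with $h_2$ fixed); symmetrically, writing it as $[h_2^2,h_1]$ shows it is insensitive to the representative of $b$, and a general change of representatives is the composite of these two. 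Thus $a+b$ is well defined. With this in hand I would verify that $(H,+)$ is an abelian group: commutativity and associativity both follow by direct computation in which the squared-commutator corrections combine via bilinearity (for associativity the net correction is the symmetric $[h_2,h_1][h_3,h_1][h_3,h_2]$), the neutral element is $1=1^2\cdot1$, and the opposite of $h^2z$ is $h^{-2}z^{-1}$.

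For the brace structure I would pass through the radical-ring description and invoke Proposition~\ref{rad}. Define $a*b=ab-a-b$ in $(H,+)$. From $a+b=(ab)[h_2,h_1]^2$ and the fact that central factors are absorbed additively, one gets the clean formula $a*b=[h_1,h_2]^2$, which is central. Bilinearity of $*$ over $+$ is then immediate from bilinearity of the commutator together with the coincidence of $+$ and $\cdot$ on central elements: for example $(a+b)*c=[h_1h_2,h_3]^2=[h_1,h_3]^2[h_2,h_3]^2=(a*c)(b*c)=a*c+b*c$, and similarly in the other variable. Since $a*b$ is central, any threefold product under $*$ vanishes (a central element is represented with trivial square part, so its $*$-product with anything is $[1,\cdot]^2=1$); hence $(H,+,*)$ is an associative ring with $R*R*R=0$, i.e. a nilpotent, and therefore radical, ring of nilpotency class at most $3$. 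By Proposition~\ref{rad}, $(H,+,\circ)$ is a two-sided brace, and since $a\circ b=a*b+a+b=ab$ the circle operation is exactly the group multiplication, so $(H,+,\cdot)$ is the asserted two-sided brace.

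Finally, square-freeness is immediate and I would leave it to the end: the associated solution is square free precisely when $\lambda_a(a)=a$, i.e. when $a^2=a+a$ in the brace, and taking $h_1=h_2=h$, $z_1=z_2=z$ in the definition gives $a+a=(h\cdot h)^2z^2[h,h]=h^4z^2=(h^2z)^2=a^2$, since $[h,h]=1$. The only genuinely delicate point in the whole argument is the well-definedness step; everything else is bookkeeping with the two class-$2$ commutator identities.
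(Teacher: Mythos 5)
Your proof is correct, but it takes a genuinely different route from the paper's at the two key steps. For well-definedness, the paper manipulates the defining expression directly, rewriting $(h_1h_2)^2z_1z_2[h_2,h_1]$ as $h_1h_2^2h_1z_1z_2$ and swapping in the alternative representatives one factor at a time; you instead isolate the sum as $a+b=ab\,[h_2,h_1]^2$ and observe that the correction equals $[h_2,h_1^2]=[h_2^2,h_1]$, which makes the independence of representatives transparent --- a cleaner argument for the same fact. More substantially, to get the two-sided brace structure the paper verifies the left-brace identity $a(b+c)+a=ab+ac$ by direct computation, proves $\lambda_a(a)=a$, and then invokes Theorem~\ref{square} to upgrade the left brace to a two-sided one; you bypass Theorem~\ref{square} entirely by exhibiting the radical ring explicitly, computing $a*b=[h_1,h_2]^2$ (central), checking bilinearity and the vanishing of triple products, and then appealing only to the elementary Proposition~\ref{rad}. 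Your route is more self-contained --- it does not lean on the nontrivial structure theorem --- and it delivers the nilpotency-class-at-most-$3$ assertion in the same stroke, whereas the paper obtains that claim afterwards by noting $a*b\in\Z(G)$ and $a*z=1$ for central $z$. The price is that you must verify that $(H,+,*)$ really is an associative ring, but as you note this is immediate since all triple $*$-products vanish. The square-freeness computation ($a+a=h^4z^2=a^2$, equivalently $\lambda_a(a)=a$) is essentially identical in both arguments.
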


\begin{proof}
Let $h_1,h_2,g_1,g_2\in G$ and $z_1,z_2,z_1',z_2'\in \Z(G)$ be such
that $h_1^2z_1=g_1^2z_1'$ and $h_2^2z_2=g_2^2z_2'$. Note that
\begin{eqnarray*}
(h_1h_2)^2z_1z_2[h_2,h_1]&=&h_1h_2^2h_1z_1z_2=h_1g_2^2h_1z_1z_2'\\
&=&g_2h_1^2g_2z_1z_2'=g_2g_1^2g_2z_1'z_2'\\
&=&(g_1g_2)^2z_1'z_2'[g_2,g_1].
\end{eqnarray*}
Therefore the operation $+$ is well defined. It is easy to check
that $(H,+)$ is an abelian group. Let $h_3\in G$ and $z_3\in \Z(G)$.
Then
\begin{eqnarray*}
h_1^2z_1(h_2^2z_2+h_3^2z_3)+h_1^2z_1&=&h_1^2z_1(h_2h_3)^2z_2z_3[h_3,h_2]+h_1^2z_1\\
&=&(h_1h_2h_3)^2z_1z_2z_3[h_1,h_2][h_1,h_3][h_3,h_2]+h_1^2z_1\\
&=&(h_1h_2h_3h_1)^2z_1^2z_2z_3[h_1,h_2]^2[h_1,h_3]^2[h_3,h_2]\\
&=&(h_1^2h_2h_3)^2z_1^2z_2z_3[h_3,h_2]\mbox{ and }\\
h_1^2z_1h_2^2z_2+h_1^2z_1h_3^2z_3&=&(h_1h_2)^2z_1z_2[h_1,h_2]+(h_1h_3)^2z_1z_3[h_1,h_3]\\
&=&(h_1h_2h_1h_3)^2z_1^2z_2z_3[h_1,h_2]^2[h_3,h_2]\\
&=&(h_1^2h_2h_3)^2z_1^2z_2z_3[h_3,h_2].
\end{eqnarray*}
Therefore $(H,+,\cdot)$ is a left brace.
$$\lambda_{h_1^2z_1}(h_1^2z_1)=h_1^2z_1h_1^2z_1-h_1^2z_1=h_1^4z_1^2-h_1^2z_1=h_1^2z_1.$$
Hence the solution of the Yang-Baxter equation associated to the
brace $H$ is square free. By Theorem~\ref{square}, $H$ is a
two-sided brace. Consider the operation $*$ on $H$ defined by
$a*b=ab-a-b$ for all $a,b\in H$. It is easy to see that $a*b\in
\Z(G)$ for all $a,b\in H$. Note that $a*z=az-a-z=az-az=1$ for all
$a\in H$ and $z\in \Z(G)$. Therefore $H*H*H=\{ 1\}$, that is the
radical ring $(H,+,*)$ is nilpotent of nilpotency class at most $3$.
\end{proof}

Note that the definition of the sum in Proposition~\ref{nilpotent}
can also be written as
$h_1^2z_1+h_2^2z_2=h_1^2z_1h_2^2z_2[h_2,h_1]^2$ and
$([h_2,h_1]^2)^2=[h_2^2z_2,h_1^2z_1]$. In fact, to prove that any
nilpotent group $G$ of class $2$ such that every element of
$[G,G]$ has a unique square root is the adjoint group of a radical
ring of nilpotency class $3$, Ault and Watters in
\cite{aultwatters} define a sum on $G$ by the rule
$g+h=gh[h,g]^{1/2}$, for $g,h\in G$. In
Proposition~\ref{nilpotent}, every element of $[H,H]$ has a square
root, but it is not necessarily unique.

The counterexample of Hales and Passi to the conjecture of Ault and
Watters is a nilpotent group $G$ of class $2$ such that $G/[G,G]$ is
an indecomposable torsion-free group of rank $2$. It is not known
whether this group $G$ is the adjoint group of a radical ring. It is
an open problem whether every nilpotent group of class $2$ is the
adjoint group of a radical ring.

We note that there exist nilpotent groups of class $2$ which admit a
structure of a left brace that is not a right brace, for
example, the dihedral group of order $8$, see \cite{sysak}.

 It is an open problem whether finite nilpotent groups of higher nilpotency class
admit a structure of left brace (equivalently whether they are IYB
groups). Note that they do not necessarily come from two-sided
braces as  not all such groups are adjoint groups of radical rings
\cite{kruse}.

\begin{example}
{\rm
Let $G=\Sym_{3}$ be the symmetric group of degree $3$. We define an operation
$+$ on $G$ such that $(G,+)$ is a cyclic group generated by $(1,2)$ and
$$2 \cdot (1,2) = (1,2,3),\; 3\cdot  (1,2) = (1,3),\;  4\cdot
(1,2)=(1,3,2),$$ $$5\cdot  (1,2)=(2,3),\; 6\cdot  (1,2)=\id.$$ It is
easy to check that  $(G,+,\cdot)$ is a left brace that is  the
semidirect product of the abelian braces $N$   and $H$ of orders
three and two respectively, where the action of $H$ on $N$ is given
by the inverse map.
 However, $G$ is not a right brace.
Furthermore,
$$\lambda_{(1,2)}=\lambda_{(1,3)}=\lambda_{(2,3)}=((1,2),(2,3))\; ((1,2,3),(1,3,2))\in \Sym_{G}$$
and
$$\lambda_{(1,2,3)}=\lambda_{(1,3,2)}=\lambda_{\id}=\id_{G}.$$
Hence, $\Soc (G) =N$ and the solution of the Yang-Baxter equation
associated to $G$ is a multipermutation solution of level $2$ with
IYB group a cyclic group of order $2$. }
\end{example}

\bibliographystyle{amsplain}

\begin{thebibliography}{99}
\itemsep-2pt
\bibitem{AmbergSysak} B. Amberg and Y. P. Sysak, Radical rings and
products of groups. Groups St Andrews 1997 in Bath, I, London Math.
Soc. Lecture Note Series, 260, Cambridge University Press,
Cambridge, 1999, 1--19.
\bibitem{aultwatters} J. C. Ault and J. F. Watters, Circle groups of
nilpotent rings, Amer. Math. Monthly 80 (1973), 48--52.
\bibitem{DavidGinosar} N. Ben David and Y.  Ginosar,  On groups of central type,
non-degenerate and bijective cohomology classes, Israel J. Math.
172 (2009), 317--335.
\bibitem{Burde} D. Burde,  Affine structures on nilmanifolds,  Int. J. of Math. 7 (1996), 599--616.
\bibitem{CatRiz} F. Catino and  R. Rizzo,  Regular subgroups of the affine group and
radical circle algebras, Bull. Aust. Math. Soc. 79 (2009), no. 1,
103--107.
\bibitem{CJO2} F. Ced\'o, E. Jespers and J. Okni\'{n}ski,
The Gelfand-Kirillov dimension of quadratic algebras satisfying the
cyclic condition, Proc. Amer. Math. Soc. 134 (2005), 653--663.
\bibitem{CJO} F. Ced\'o, E. Jespers and J. Okni\'{n}ski,
Retractability  of the set theoretic solutions of the Yang-Baxter
equation, Adv. Math. 224 (2010), 2472--2484.
\bibitem{CJR} F. Ced\'o, E. Jespers and \'{A}. del R\'{\i}o, Involutive
Yang-Baxter groups, Trans. Amer. Math. Soc. 362 (2010), 2541--2558.
\bibitem{drinfeld} V. G. Drinfeld, On unsolved problems in quantum group theory.
Quantum Groups, Lecture Notes Math. 1510, Springer-Verlag, Berlin,
1992, 1--8.
\bibitem{EtingofGelaki}  P. Etingof and S. Gelaki, A method of construction of
finite-dimensional triangular semisimple Hopf algebras,
Mathematical Research Letters 5 (1998), 551--561.
\bibitem{ESS} P. Etingof, T. Schedler and A. Soloviev,
  Set-theoretical solutions to the quantum
  Yang-Baxter equation, Duke Math. J. 100 (1999), 169--209.
\bibitem{Gat-Mis} T. Gateva-Ivanova, Regularity of skew-polynomial rings with
binomial relations, Talk at the International Algebra Conference,
Miskolc, Hungary, 1996.
\bibitem{Gat} T. Gateva-Ivanova, A combinatorial approach to the
set-theoretic solutions of the Yang-Baxter equation, J. Math. Phys.
45 (2004), 3828--3858.
\bibitem{GC} T. Gateva-Ivanova and P. Cameron, Multipermutation
solutions of the Yang-Baxter equation, Comm. Math. Phys. 309 (2012), 583--621.
\bibitem{gat-maj} T. Gateva-Ivanova and S. Majid,
Matched pairs approach to set theoretic solutions of
the Yang-Baxter equation, J. Algebra 319 (2008), no.
4, 1462--1529.
\bibitem{GIVdB} T. Gateva-Ivanova and M. Van den Bergh,
  Semigroups of $I$-type, J. Algebra 206 (1998), 97--112.
\bibitem{goffa} I. Goffa and E. Jespers, Monoids of $IG$-type and maximal orders,
J. Algebra 308 (2007), 44--62.
\bibitem{HalesPassi} A. W. Hales and I. B. S. Passi,
The second augmentation quotient of an integral group ring, Arch.
Math. (Basel) 31 (1978), 259--265.
\bibitem{Huppert} B. Huppert,  Endliche Gruppen I, Springer-Verlag, Berlin 1967.
\bibitem{JO} E. Jespers and J. Okni\'{n}ski, Monoids and Group of
  $I$-Type, Algebr. Represent. Theory 8 (2005), 709--729.
\bibitem{JObook} E. Jespers and J. Okni\'{n}ski,  Noetherian Semigroup Rings,
Springer, Dordrecht 2007.
\bibitem{kassel} C. Kassel,  Quantum Groups, Graduate Text in Mathematics 155,
Springer-Verlag, New York, 1995.
\bibitem{kruse} R. L. Kruse, On the circle group of a nilpotent
ring, Amer. Math. Monthly 77 (1970),168--170.
\bibitem{LYZ} Jiang-Hua Lu, Min Yan and Yong-Chang Zhu, On the set-theoretical Yang-Baxter
equation, Duke Math. J. 104 (2000),153--170.
\bibitem{passman} D. S. Passman,  Permutation Groups, Benjamin, New York, 1968.
\bibitem{passman2} D. S. Passman, The Algebraic Structure of Group Rings,
Wiley-Interscience, New York, 1977.
\bibitem{PolcinoMiliesSehgal} C. Polcino Milies and S. K. Sehgal, An
introduction to group rings, Kluwer Academic Publishers, Dordrecht,
2002.
\bibitem{radford} D. E. Radford, Hopf algebras, World Scientific,
New  Jersey, 2012.
\bibitem{Robinson} D. K. Robinson,  A course in the theory of groups, second edition,
Springer-Verlag, New York, 1996.
\bibitem{RS} K. W. Roggenkamp and  L. L. Scott,
Isomorphisms of $p$-adic group rings, Ann. of Math.  126 (1987),
593--647.
\bibitem{rump1} W. Rump, A decomposition theorem for square-free
unitary solutions of the quantum Yang-Baxter equation, Adv. Math.
193 (2005), 40--55.
\bibitem{rump2} W. Rump, Modules over braces, Algebra Discrete Math. (2006), 127--137.
\bibitem{rump3} W. Rump, Braces, radical rings, and the quantum Yang-Baxter equation,
J. Algebra 307 (2007), 153--170.
\bibitem{rump4} W. Rump, Classification of cyclic braces, J. Pure Appl. Algebra 209 (2007),
671--685.
\bibitem{rump5} W. Rump, Generalized radical rings, unknotted biquandles, and quantum
groups, Colloq. Math. 109 (2007), 85--100.
\bibitem{rump6} W. Rump, Semidirect products in algebraic logic and solutions of the
quantum Yang-Baxter equation, J. Algebra Appl. 7 (2008), 471--490.
\bibitem{rump7} W. Rump, Addendum to ``Generalized radical rings, unknotted biquandles,
and quantum groups'' (Colloq. Math. 109 (2007), 85--100),  Colloq.
Math. 117 (2009),  295--298.
\bibitem{sandling} R. Sandling, Group rings of circle and unit
groups, Math. Z. 140 (1974), 195--202.
\bibitem{sehgal} S. K. Sehgal, Units in integral group rings, Pitman
Monographs and Surveys in Pure and Appl. Math 69, Longman
Scientific \& Technical, Essex, 1993.
\bibitem{sysak} Y. P. Sysak, Product of group and the quantum
Yang-Baxter equation, notes of a talk in Advances in Group Theory
and Applications, 2011, Porto Cesareo.
\bibitem{sysak2} Y. P. Sysak, The adjoint group of radical rings
and related questions, in: Ischia Group Theory 2010 (proceedings of
the conference: Ischia, Naples, Italy, 14-17 April 2010), pp.
344--365, World Scientific, Singapore 2011.
\bibitem{Yang} C.N. Yang, Some exact results for the many-body
problem in one dimension with repulsive
delta-function interaction, Phys. Rev. Lett. 19
(1967), 1312--1315.
 \end{thebibliography}

 \vspace{30pt}
 \noindent \begin{tabular}{llllllll}
 F. Ced\'o && E. Jespers  \\
 Departament de Matem\`atiques &&  Department of Mathematics \\
 Universitat Aut\`onoma de Barcelona &&  Vrije Universiteit Brussel  \\
08193 Bellaterra (Barcelona), Spain    &&
Pleinlaan 2, 1050 Brussel, Belgium \\
   &&   \\
J. Okni\'nski &&  \\ Institute of Mathematics &&
\\ Warsaw University&& \\ Banacha 2&& \\ 02-097
Warsaw, Poland &&
\end{tabular}

\end{document}